\newcolumntype{R}[1]{>{\raggedleft\arraybackslash }b{#1}}
\newcolumntype{L}[1]{>{\raggedright\arraybackslash }b{#1}}
\newcolumntype{C}[1]{>{\centering\arraybackslash }b{#1}}
\newcounter{minutes}\setcounter{minutes}{\time}
\newcounter{hours}\setcounter{hours}{\time}
\newtheorem{theo}{Theorem}[section]
\newtheorem{theorem}{Theorem}[section]
\newtheorem{lemma}[theorem]{Lemma}
\newtheorem{corollary}[theorem]{Corollary}
\newtheorem{remark}[theorem]{Remark}
\title{ Properties and applications of the Bicomplex Miller-Ross function}
\author[Snehasis Bera]{Snehasis Bera}
\address{Snehasis Bera\newline Department of Mathematics,\newline National Institute of Technology Jamshedpur, Jamshedpur-831014, Jharkhand, India.}
\email{berasnehasis1996@gmail.com}
\author[Sourav Das]{Sourav Das$^\ast$}
\thanks{$^\ast$Corresponding author}
\address{Sourav Das\newline Department of Mathematics,\newline National Institute of Technology Jamshedpur, Jamshedpur-831014, Jharkhand, India.}
\email{souravdasmath@gmail.com, souravdas.math@nitjsr.ac.in}
\author[Abhijit Banerjee]{Abhijit Banerjee}
\address{Abhijit Banerjee\newline Department of Mathematics,\newline Garhbeta College, Paschim Medinipur-721127, West Bengal, India.}
\email{abhijit.banerjee.81@gmail.com}
\keywords{Bicomplex numbers, Bicomplex functions, Miller Ross function, Kinetic equation}
\subjclass{30G35; 33C20}
\date{}
\begin{document}

\begin{abstract}
  In this work, Miller Ross function with bicomplex arguments has been introduced. Various properties of this function including recurrence relations, integral representations and differential relations are established. Furthermore, the bicomplex holomorphicity and Taylor series representation of this function are discussed, along with the derivation of a differential equation.
  Finally, as applications some relations of fractional order derivatives and solutions for the bicomplex extension of the generalized fractional kinetic equation involving the bicomplex Miller Ross function are derived.
\end{abstract}

\maketitle

%\tableofcontents
\section{Introduction and motivation}
 In 1892, C. Segre \cite{le}  introduced the concept of bicomplex numbers, considering them as a generalization of complex numbers. Similar to how complex numbers are perceived as two-dimensional vectors, bicomplex numbers can be thought of as four-dimensional vectors. Moreover, they can be conceptualized as ordered pairs of two complex numbers. The set of bicomplex numbers, denoted as $\mathbb{BC}$, forms a real vector space equipped with operations of addition and scalar multiplication. Numerous properties of bicomplex numbers have been uncovered, encompassing algebraic and
geometric aspects, as well as practical applications  (see \cite{bc_number,multicomplex,hyperbolic}).

Miller Ross function is one of the important special functions, playing a crucial role in fractional calculus \cite{kernel}, statistics and
probability theory\cite{subclass} and related branches of science
and engineering.  In \cite{gamma}, the authors extend gamma and beta functions from a complex variable to a bicomplex variable, establishing various properties, including the Gauss multiplication theorem, binomial theorem, and Legendre duplication formula. In \cite{polygamma}, the authors introduced a bicomplex generalization of the polygamma function, exploring properties such as recurrence relations, multiplication formula, reflection formula and integral representations.
Furthermore, in \cite{bc_hypergeometric}, the authors expanded the domain of the hypergeometric function from complex space to bicomplex space, establishing integral representations and recurrence relations. The bicomplex extension of several special functions and their applications can be found in (see \cite{zeta,bc_zeta,neural}). Motivated by the above results, introduce Miller-Ross function with bicomplex arguments.
The set of bicomplex numbers $\mathbb{BC}$ is defined by \cite{bc_number}:
\begin{align*}\mathbb{BC}&=\{Z=a_1+ia_2+ja_3+ka_4|a_1,a_2,a_3,a_4\in \mathbb{R}\}\\
&=\{Z=w_1+jw_2|w_1,w_2\in \mathbb{C}(i)\},
\end{align*}
where $i,j$ and $k$ satisfies the conditions: $i^2=j^2=-1,k^2=1$ and $ij=ji=k$.
For every bicomplex number $Z=w_1+jw_2\in \mathbb{BC} $ has unique representation as follows :  $$ Z=(w_1-iw_2)e_1+(w_1+iw_2)e_2=z_1e_1+z_2 e_2,$$ where $e_1=\frac{1+k}{2}$  and \; $e_2=\frac{1-k}{2}$, which satisfies the identities: $e_1+e_2=1, e_1-e_2=ij, e_1^{2}=e_1, e_2^{2}=e_2$ and this representation is called idempotent representation of the bicomplex number $Z\in\mathbb{BC}$. Idempotent representation is very helpful as addition, multiplication and division can be done term by term. Specifically, let $Z_1=z_{11} e_1+z_{12} e_2$ and $Z_2=z_{21} e_1+z_{22} e_2$ be two bicomplex numbers, then
\begin{align*}
    &Z_1+Z_2 =(z_{11}+z_{21})e_1+(z_{12}+z_{22})e_2,\\
    &Z_1\cdot Z_2=(z_{11}z_{21})e_1+(z_{12}z_{22}) e_2,\\
   & \frac{Z_1}{Z_2}=\frac{z_{11}}{z_{21}}e_1+\frac{z_{21}}{z_{22}}e_2; \quad Z_2\notin \mathbb{O}_2.
\end{align*}
The space $\mathbb{BC}$ forms a commutative unitary ring with zero divisors
 and the set of all zero divisors is defined by \cite{bc_number} : $$\mathbb{O}_2=\{Z=w_1+jw_2|{w_1}^2+{w_2}^2=0\}.$$ Moreover, the elements of $\mathbb{O}_2$ are also called singular elements.

For any bicomplex number $Z=z_1e_1+z_2 e_2$, the hyperbolic norm is defined as\cite{bc_number}: $$|Z|_h=|z_1|e_1+|z_2|e_2,$$ and satisfies $|ZW|_h=|Z|_h|W|_h$, for any $Z,W\in\mathbb{BC}$.

The definition of the derivative at $Z_0$ of a bicomplex function $f:\Omega\subset \mathbb{BC} \rightarrow \mathbb{BC}$ is similar to the derivative of a complex valued function  i,e,  if the limit $$\lim\limits_{\substack{{\Delta Z\to 0}\\{\Delta Z\notin \mathbb{O}_2}}}\frac{f(Z_0+\Delta Z)-f(Z_0)}{\Delta Z}$$ exist finitely. Furthermore, if $f$ is differentiable at every points in $\Omega$ then $f$ is said to be bicomplex holomorphic in $\Omega$. With the help of the theorem of function of complex variable this is equivalent to say that $f=f_1(w_1,w_2)+jf_2(w_1,w_2)$ is bicomplex holomorphic if and only if, $f_1$ and $f_2$ are holomorphic in $\Omega$ and satisfies bicomplex Cauchy-Riemann equations :
\begin{align}\label{eq:i5}
       \frac{\partial f_1}{\partial w_1}= \frac{\partial f_2}{\partial w_2} \quad
      and \quad  \frac{\partial f_1}{\partial w_2}=- \frac{\partial f_2}{\partial w_1}.
      \end{align}

Let $D$ be a piece wise continuously differentiable curve in $\mathbb{BC}$ defined by $D:\xi(t),\; a\leq t\leq b$, then integration of bicomplex function $f$ is given by \cite{multicomplex}
\begin{align*}
    \int_D f(\xi) d\xi=\int_a^b f(\xi(t)) \xi'(t) dt.
\end{align*}
Moreover, if $D_1:\xi_1(t),\;a\leq t\leq b$ and $D_2:\xi_2(t),\;a\leq t\leq b$ are two curves in $\mathbb{C}(i)$ such that $D:\xi(t)=\xi_1(t)e_1+\xi_2(t)e_2,\;a\leq t\leq b$ i.e. $D=(D_1,D_2)$, then
\begin{align*}
    \int_D f(\xi) d\xi=\left[\int_{D_1}f(\xi_1) d\xi_1\right] e_1+\left[\int_{D_2}f(\xi_2) d\xi_2\right] e_2.
\end{align*}
For every $Y=x_1+jx_2=y_1e_1+y_2e_2\in{\mathbb{BC}}$, Euler product form of the bicomplex gamma function is defined as follows \cite{gamma}:
    \begin{align*}
        \Gamma(Y)=\frac{1}{Ye^{\gamma Y}}\prod_{n=1}^{\infty}\left(\left(\frac{n}{n+Y}\right)\exp\left(\frac{Y}{n}\right)\right),
    \end{align*}
   with $x_1\not=-\frac{(p+q)}{2}$, $x_2\not=i\frac{(q-p)}{2}$ where  $p,q\in \mathbb{N}\cup \{0\}$ and $\gamma$ is the Euler constant \cite{sp_function}. Furthermore, bicomplex gamma function can be expressed as
    \begin{align}\label{eq:g}
        \Gamma(Y)=\Gamma(y_1) e_1+ \Gamma(y_2) e_2.
    \end{align}
 Miller Ross function is defined as \cite{miller} :
    \begin{align}\label{eq:5}
         E_{\nu,c}(z)=z^\nu
  \sum_{n=0}^{\infty}\frac{(cz)^n}{\Gamma{(\nu+n+1)}}, \;\;(\nu,c,z\in{\mathbb{C}}).
    \end{align}
Let us recall the following lemmas, which will be helpful to prove the main results.
\begin{lemma}\rm{\cite{bicomplex ghf}}
    Let $p\leq q+1$ and $W(Z)={}_{p} F_{q}\left[\begin{matrix}&\alpha_1, &\alpha_2,.... &\alpha_{p};\\&\beta_1,&\beta_2,.....&\beta_{q};&\end{matrix}Z\right]$ be a bicomplex generalized hypergeometric function then it satisfies the differential equation
    \begin{align}\label{eq:in}
        \frac{d}{dZ} \prod\limits_{j=1}^{q}\left(Z\frac{d}{dZ}+\beta_j-1\right)W(Z)-\prod\limits_{i=1}^{p}\left(Z\frac{d}{dZ}+\alpha_i\right)W(Z)=0.
    \end{align}
\end{lemma}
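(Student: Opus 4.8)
The plan is to reduce the bicomplex identity to the classical generalized hypergeometric differential equation by means of the idempotent decomposition. Writing the bicomplex variable and the parameters idempotently as $Z=z_1e_1+z_2e_2$, $\alpha_i=\alpha_{i,1}e_1+\alpha_{i,2}e_2$ and $\beta_j=\beta_{j,1}e_1+\beta_{j,2}e_2$, and using $Z^n=z_1^ne_1+z_2^ne_2$ together with the identities $e_1e_2=0$, $e_1^2=e_1$, $e_2^2=e_2$, the defining series of $W$ splits as $W(Z)=W_1(z_1)e_1+W_2(z_2)e_2$, where each $W_k$ is the ordinary $\mathbb{C}(i)$-valued function ${}_pF_q$ built from the corresponding complex parameters. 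This is the step that pushes the whole problem into $\mathbb{C}(i)$ componentwise, and the hypothesis $p\le q+1$ is precisely what guarantees convergence of the defining series in each component.

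Next I would establish that the bicomplex differential operator respects this decomposition. From the definition of the derivative at $Z_0$ and the block structure $\mathbb{BC}\cong\mathbb{C}(i)e_1\oplus\mathbb{C}(i)e_2$, any function of the form $g(Z)=g_1(z_1)e_1+g_2(z_2)e_2$ satisfies $\frac{dg}{dZ}=g_1'(z_1)e_1+g_2'(z_2)e_2$: one takes $\Delta Z=\Delta z_1\,e_1+\Delta z_2\,e_2$ with both components nonzero (so that $\Delta Z\notin\mathbb{O}_2$), and the difference quotient divides over the idempotents, yielding the two complex derivatives in the limit. Consequently the operator $Z\frac{d}{dZ}+\beta_j-1$ acts as $\big(z_1\frac{d}{dz_1}+\beta_{j,1}-1\big)e_1+\big(z_2\frac{d}{dz_2}+\beta_{j,2}-1\big)e_2$, and because $e_1e_2=0$ the composition of several such operators, together with the extra factor $\frac{d}{dZ}$, again splits componentwise with no cross terms.

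Combining these two facts, the left-hand side of the claimed equation becomes $L_1(z_1)e_1+L_2(z_2)e_2$, where for $k=1,2$
\begin{align*}
L_k=\frac{d}{dz_k}\prod_{j=1}^{q}\left(z_k\frac{d}{dz_k}+\beta_{j,k}-1\right)W_k(z_k)-\prod_{i=1}^{p}\left(z_k\frac{d}{dz_k}+\alpha_{i,k}\right)W_k(z_k)
\end{align*}
is exactly the left-hand side of the classical generalized hypergeometric differential equation for ${}_pF_q$. The bicomplex equation therefore holds if and only if $L_1=L_2=0$, that is, if and only if each complex ${}_pF_q$ solves its own ODE. This last fact is standard and, should a self-contained argument be preferred, follows by a term-by-term check: writing $\theta=z_k\frac{d}{dz_k}$ one has $\theta z_k^n=nz_k^n$, and multiplying the equation through by $z_k$ reduces the claim to the Pochhammer identity $c_n\,n\prod_{j}(n+\beta_{j,k}-1)=c_{n-1}\prod_{i}(n-1+\alpha_{i,k})$ for the coefficients $c_n=\prod_i(\alpha_{i,k})_n/\big(\prod_j(\beta_{j,k})_n\,n!\big)$, which is immediate from the ratio $c_n/c_{n-1}$.

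The main obstacle is not the classical ODE, which is routine, but the careful justification that the product $\prod_{j=1}^q\big(Z\frac{d}{dZ}+\beta_j-1\big)$ decomposes over the idempotents without generating $e_1e_2$ cross terms. Once the componentwise action of $\frac{d}{dZ}$ is in place and the orthogonality $e_1e_2=0$ is invoked, this reduction is clean, and the proof is complete.
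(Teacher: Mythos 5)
Your proof is correct. Note that the paper does not actually prove this statement: it is recalled as a lemma with a citation to the authors' earlier work on bicomplex generalized hypergeometric functions, so there is no in-paper proof to compare against. Your argument---splitting $W(Z)=W_1(z_1)e_1+W_2(z_2)e_2$ via the idempotent representation, showing that $\frac{d}{dZ}$ and hence the operators $Z\frac{d}{dZ}+\beta_j-1$ act componentwise with no $e_1e_2$ cross terms, and then invoking (or verifying by the Pochhammer ratio identity) the classical ${}_pF_q$ equation in each component---is precisely the idempotent-decomposition technique this paper uses for every analogous result, so it is essentially the expected proof; the only point worth tightening is that for $p=q+1$ the componentwise series converge only for $|z_k|<1$, so the equation should be asserted on that restricted domain.
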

\begin{lemma}\rm{\cite{sp_function}}\label{th:zd}
    If $a_m$ and $b_j$ are neither zero nor negative integer and $\Re(z)<0$, the Barnes type contour integral of the generalized hypergeometric function is given by
    \begin{align*}
        \int_B\frac{(-z)^s\Gamma(-s)\prod\limits_{m=1}^q\Gamma(a_m+s)}{\prod\limits_{j=1}^q\Gamma(b_j+s)}ds=\frac{2\pi i \prod\limits_{m=1}^q\Gamma(a_m)}{\prod\limits_{j=1}^q\Gamma(b_m)}{}_{q} F_{q}\left[\begin{matrix}&a_1, &a_2,.... &a_{p};\\&b_1,&b_2,.....&b_{q};&\end{matrix}z\right],
    \end{align*}
    where $B$ is a Barnes path.
\end{lemma}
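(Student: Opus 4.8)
The plan is to evaluate the contour integral by the residue theorem, closing the Barnes path to the right and collecting the residues contributed by the poles of $\Gamma(-s)$. First I would record the pole structure of the integrand
\[
\phi(s)=\frac{(-z)^{s}\,\Gamma(-s)\prod_{m=1}^{q}\Gamma(a_m+s)}{\prod_{j=1}^{q}\Gamma(b_j+s)}.
\]
Since no $a_m$ (respectively $b_j$) is zero or a negative integer, the factor $\Gamma(-s)$ supplies simple poles at $s=0,1,2,\dots$, while each $\Gamma(a_m+s)$ supplies poles at $s=-a_m-k$ with $k\ge 0$; the denominator $\prod_{j}\Gamma(b_j+s)$ is pole-free and provides the decay exploited below. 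By definition the Barnes path $B$ runs from $-i\infty$ to $+i\infty$ in such a way that the poles of $\Gamma(-s)$ lie to its right and all poles of the $\Gamma(a_m+s)$ lie to its left, so that these two families are separated.

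Next I would close $B$ to the right along a family of semicircular arcs $C_R$ of radius $R=N+\tfrac12$ (chosen to avoid the poles) and show that $\int_{C_R}\phi(s)\,ds\to 0$ as $N\to\infty$. This is the analytic heart of the proof. Applying Stirling's asymptotics to the quotient of Gamma functions (and using the reflection formula to rewrite $\Gamma(-s)$ for large positive $\Re s$), together with $(-z)^{s}=e^{s\log(-z)}$, one verifies that the integrand decays on the arcs. It is precisely here that the hypothesis $\Re(z)<0$ enters: since $\Re(z)<0$ is equivalent to $|\arg(-z)|<\tfrac{\pi}{2}$, the exponential factor $|(-z)^{s}|$ combines with the $e^{-\pi|\Im s|/2}$ decay coming from $\Gamma(-s)$ to suppress the arc contributions (and, on the vertical part, to make the integral converge in the first place).

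With the arc contribution eliminated, the residue theorem, taken with the clockwise orientation induced by closing to the right, yields
\[
\frac{1}{2\pi i}\int_{B}\phi(s)\,ds=-\sum_{n=0}^{\infty}\operatorname{Res}_{s=n}\phi(s).
\]
Using $\operatorname{Res}_{s=n}\Gamma(-s)=\frac{(-1)^{n+1}}{n!}$, the residue at $s=n$ is
\[
-\operatorname{Res}_{s=n}\phi(s)=\frac{z^{n}\prod_{m=1}^{q}\Gamma(a_m+n)}{n!\,\prod_{j=1}^{q}\Gamma(b_j+n)}.
\]
I would then rewrite every Gamma factor through the Pochhammer symbol via $\Gamma(a_m+n)=\Gamma(a_m)(a_m)_n$ and $\Gamma(b_j+n)=\Gamma(b_j)(b_j)_n$, factor out the $n$-independent quantity $\prod_{m}\Gamma(a_m)/\prod_{j}\Gamma(b_j)$, and recognise the remaining series as ${}_{q}F_{q}(a_1,\dots,a_q;b_1,\dots,b_q;z)$. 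Multiplying through by $2\pi i$ gives the stated identity.

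I expect the only genuine obstacle to be the estimate in the second step: the residue computation and the repackaging into Pochhammer symbols are routine, whereas controlling the growth of the full product of Gamma functions on the expanding arcs, and confirming that $\Re(z)<0$ is exactly the condition making this decay hold, requires the delicate Stirling bookkeeping that legitimises closing the contour.
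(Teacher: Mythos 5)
Your proposal cannot be compared against an internal proof because the paper does not prove this lemma: it is quoted (with minor typos, e.g.\ $\Gamma(b_m)$ for $\Gamma(b_j)$ and $a_p$ for $a_q$) as a known classical result from Rainville's \emph{Special Functions}, and is used as a black box in the proof of the Barnes-integral representation of the bicomplex Miller--Ross function. Judged on its own, your argument is the standard and correct route: the pole separation built into the Barnes path, the residue $\operatorname{Res}_{s=n}\Gamma(-s)=\frac{(-1)^{n+1}}{n!}$, the sign coming from the clockwise orientation when closing to the right, and the repackaging $\Gamma(a_m+n)=\Gamma(a_m)(a_m)_n$, $\Gamma(b_j+n)=\Gamma(b_j)(b_j)_n$ into the ${}_qF_q$ series are all accurate. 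You also correctly identify the role of $\Re(z)<0$, i.e.\ $|\arg(-z)|<\tfrac{\pi}{2}$: since the integrand contains $q+1$ Gamma factors upstairs against $q$ downstairs, the ratio $\prod_m\Gamma(a_m+s)/\prod_j\Gamma(b_j+s)$ grows only polynomially on vertical lines, so the net decay $e^{-\pi|\Im s|/2}$ of $\Gamma(-s)$ against the factor $e^{-\Im s\,\arg(-z)}$ is exactly what makes the integral converge and the arcs vanish --- and, notably, no restriction $|z|<1$ is needed (in contrast with the ${}_2F_1$ Barnes integral, where the extra numerator Gamma forces $|z|<1$ followed by analytic continuation), consistent with ${}_qF_q$ being entire in $z$. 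The one piece you leave unfinished is the Stirling estimate on the arcs $|s|=N+\tfrac12$ (using the reflection formula to control $\Gamma(-s)$ away from its poles); that computation is routine but is genuinely required for a complete proof, and you flag it honestly rather than hand-waving it away.
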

\section{Bicomplex Miller Ross function}
In this section, we introduce bicomplex Miller Ross function and discuss its convergence in bicomplex space. Moreover, we establish various recurrence relations of this function. The bicomplex Miller-Ross function is defined as
\begin{align}\label{eq:gh}
    E_{\mathcal{V},C}(Z)=Z^\mathcal{V}
  \sum_{r=0}^{\infty}\frac{(CZ)^r}{\Gamma{(\mathcal{V}+r+1)}},
\end{align}
where $\mathcal{V}=\nu_1e_1+\nu_2e_2=\alpha+j\beta,C=c_1e_1+c_2e_2,Z=z_1e_1+z_2e_2\in{\mathbb{BC}}$.
The following theorem justifies our definition of bicomplex Miller Ross function.
\begin{theo}
    Suppose that  $Z=z_1e_1+z_2e_2, \mathcal{V}=\nu_1e_1+\nu_2e_2=\alpha +j\beta, C=c_1e_1+c_2e_2 \in{\mathbb{BC}}$, where $e_1=\frac{1+k}{2}$, $e_2=\frac{1-k}{2}$, then idempotent representation of the bicomplex Miller Ross function can be expressed as
 \begin{align}\label{eq:fa}
     E_{\mathcal{V},C}(Z)=E_{\nu_1,c_1}(z_1)e_1+ E_{\nu_2,c_2}(z_2)e_2,
 \end{align}
  and the series $Z^{-\mathcal{V}} E_{\mathcal{V},C}(Z)$ absolutely hyperbolic  convergent for all $Z\in\mathbb{BC}$.
 \end{theo}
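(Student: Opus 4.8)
The plan is to reduce everything to the two complex components via the idempotent basis $\{e_1,e_2\}$, using the term-by-term rules for multiplication and division recorded in the introduction together with the factorization $\Gamma(Y)=\Gamma(y_1)e_1+\Gamma(y_2)e_2$ from \eqref{eq:g}. First I would record the elementary idempotent identities that drive the computation: since $e_1^2=e_1$, $e_2^2=e_2$ and $e_1e_2=e_2e_1=0$, any bicomplex number raised to an integer power splits as $W^r=w_1^r e_1+w_2^r e_2$, and more generally the bicomplex power $Z^{\mathcal V}$ is defined component-wise so that $Z^{\mathcal V}=z_1^{\nu_1}e_1+z_2^{\nu_2}e_2$. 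In particular $CZ=c_1z_1e_1+c_2z_2e_2$, and hence $(CZ)^r=(c_1z_1)^re_1+(c_2z_2)^re_2$.

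Next I would treat the gamma factor. Writing $\mathcal V+r+1=(\nu_1+r+1)e_1+(\nu_2+r+1)e_2$ and invoking \eqref{eq:g} gives $\Gamma(\mathcal V+r+1)=\Gamma(\nu_1+r+1)e_1+\Gamma(\nu_2+r+1)e_2$; by the term-by-term division rule its reciprocal is $\Gamma(\nu_1+r+1)^{-1}e_1+\Gamma(\nu_2+r+1)^{-1}e_2$. Combining the three pieces, the $r$-th term of the series becomes
\begin{align*}
\frac{(CZ)^r}{\Gamma(\mathcal V+r+1)}=\frac{(c_1z_1)^r}{\Gamma(\nu_1+r+1)}e_1+\frac{(c_2z_2)^r}{\Gamma(\nu_2+r+1)}e_2.
\end{align*}
Summing over $r$ and multiplying by $Z^{\mathcal V}=z_1^{\nu_1}e_1+z_2^{\nu_2}e_2$ (again term-by-term, using $e_1e_2=0$) collapses the bicomplex series into the two complex Miller--Ross series of \eqref{eq:5}, which is precisely the claimed identity \eqref{eq:fa}.

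For the convergence statement I would pass to the hyperbolic norm. Since $|Z|_h=|z_1|e_1+|z_2|e_2$ is multiplicative, the hyperbolic norm of the $r$-th term of $Z^{-\mathcal V}E_{\mathcal V,C}(Z)$ is $|c_1z_1|^r/|\Gamma(\nu_1+r+1)|\,e_1+|c_2z_2|^r/|\Gamma(\nu_2+r+1)|\,e_2$, so absolute hyperbolic convergence reduces to the convergence of the two ordinary real series of non-negative terms attached to $e_1$ and $e_2$. To each I would apply the ratio test: using $\Gamma(\nu+r+2)=(\nu+r+1)\Gamma(\nu+r+1)$, the ratio of consecutive terms in the $e_1$-component is $|c_1z_1|/|\nu_1+r+1|\to 0$ as $r\to\infty$, and likewise for the $e_2$-component. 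Hence both component series converge for every choice of $z_1,z_2\in\mathbb C(i)$, which is exactly absolute hyperbolic convergence of $Z^{-\mathcal V}E_{\mathcal V,C}(Z)$ on all of $\mathbb{BC}$.

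The computation is largely mechanical once the idempotent splitting is in place; the only point requiring care is the legitimacy of the term-by-term manipulations — specifically, pinning down the meaning of the bicomplex power $Z^{\mathcal V}$ for genuinely bicomplex $\mathcal V$ and confirming that it decomposes as $z_1^{\nu_1}e_1+z_2^{\nu_2}e_2$, and justifying that the reciprocal gamma factor is well defined (i.e. that $\Gamma(\mathcal V+r+1)$ avoids the zero divisors $\mathbb O_2$) under the parameter restrictions inherited from the bicomplex gamma function. I expect this bookkeeping about singular elements, rather than the convergence argument, to be the main thing to get right.
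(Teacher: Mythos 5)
Your proposal is correct and follows essentially the same route as the paper: the identity \eqref{eq:fa} is obtained by the same idempotent splitting of $Z^{\mathcal V}$, $(CZ)^r$ and $\Gamma(\mathcal V+r+1)$ with term-by-term multiplication and division, and the convergence claim is settled by a componentwise ratio test, which the paper phrases equivalently as the hyperbolic ratio test giving hyperbolic radius of convergence $R=R_1e_1+R_2e_2$ with $R_1=R_2=\infty$. Your closing caveat about justifying $Z^{\mathcal V}=z_1^{\nu_1}e_1+z_2^{\nu_2}e_2$ and about $\Gamma(\mathcal V+r+1)$ avoiding $\mathbb O_2$ is a point the paper itself passes over silently, so it is a reasonable refinement rather than a divergence.
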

 \begin{proof}
 Using \eqref{eq:g} and \eqref{eq:gh}, we have
\begin{align}
    E_{\mathcal{V},C}(Z)&=(z_1e_1+z_2e_2)^{(\nu_1e_1+\nu_2e_2)} \sum_{r=0}^{\infty}\frac{(c_1z_1e_1+c_2z_2e_2)^r}{\Gamma{(\nu_1e_1+\nu_2e_2+r+1)}}\nonumber\\&=(z_1^{\nu_1}e_1+z_2^{\nu_2}e_2) \sum_{r=0}^{\infty}\frac{(c_1z_1)^re_1+(c_2z_2)^re_2}{\Gamma{(\nu_1+r+1)}e_1+\Gamma{(\nu_2+r+1)}e_2}\nonumber\\&=(z_1^{\nu_1}e_1+z_2^{\nu_2}e_2) \left[\sum_{r=0}^{\infty}\frac{(c_1z_1)^r}{\Gamma{(\nu_1+r+1)}}e_1+\sum_{r=0}^{\infty}\frac{(c_2z_2)^r}{\Gamma{(\nu_2+r+1)}}e_2\right]\nonumber\\
    &={z_1}^{\nu_1}\sum_{r=0}^{\infty}\frac{(c_1z_1)^r}{\Gamma{(\nu_1+r+1)}}e_1+{z_2}^{\nu_2}\sum_{r=0}^{\infty}\frac{(c_2z_2)^r}{\Gamma{(\nu_2+r+1)}}e_2\nonumber\\
    &=E_{\nu_1,c_1}(z_1)e_1+ E_{\nu_2,c_2}(z_2)e_2.\nonumber
\end{align}
Now, the idempotent components of $\mathcal{V}, C, Z\in\mathbb{BC}$ all are complex numbers then complex Miller-Ross functions $ E_{\nu_1,c_1}(z_1)$ and  $E_ {\nu_2,c_2}(z_2)$ are well defined and any bicomplex Miller-Ross functions can be represented as $E _ {\mathcal{V},C}(Z) =E_{\nu_1,c_1}(z_1)e_1 +E_ {\nu_2,c_2}(z_2)e_2$, that implies bicomplex Miller Ross function $E _ {\mathcal{V},C}(Z)$ is well define and this representation is idempotent representation of bicomplex Miller Ross function $E _ {\mathcal{V},C}(Z)$. Now, we will check convergence of the series $Z^{-\mathcal{V}} E_{\mathcal{V},C}(Z)$ defined in \eqref{eq:gh}, using hyperbolic ratio test \cite{bc_root}. Let $R=R_1e_1+R_2e_2$ be the hyperbolic radius of convergent of the bicomplex power series
\begin{align*}
    Z^{-\mathcal{V}} E_{\mathcal{V},C}(Z)=\sum_{r=0}^{\infty}\frac{(CZ)^n}{\Gamma{(\mathcal{V}+r+1)}} =\sum_{r=0}^{\infty}a_rZ^r=\sum_{r=0}^{\infty}a_{1r}z_1^re_1+\sum_{r=0}^{\infty}a_{2r}z_2^re_2,
\end{align*}
where $a_{1r}=\frac{c_1^r}{\Gamma(\nu_1+r+1)}$ and $a_{2r}=\frac{c_2^r}{\Gamma(\nu_2+r+1)}$. Then,
\begin{align}\label{eq:mi}
     R=R_1e_1+R_2e_2&=\lim_{r\to \infty} \sup\frac{|a_r|_h}{|a_{r+1}|_h}\nonumber\\
    &=\lim_{n\to \infty} \sup\frac{|a_{1r}|}{|a_{1r+1}|}e_1+\lim_{r\to \infty} \sup\frac{|a_{2r}|}{|a_{2r+1}|}e_2\nonumber\\
    &=\lim_{r\to \infty} \sup\left|\frac{(\nu_1+r+1)}{c_1}\right | e_1+ \lim_{r\to \infty} \sup\left|\frac{(\nu_2+r+1)}{c_2}\right | e_2.
\end{align}
From \eqref{eq:mi}, we have $R_1=\infty$ and $R_2=\infty$. Then by hyperbolic ratio test \cite{bc_root}, the series  $Z^{-\mathcal{V}} E_{\mathcal{V},C}(Z)$ absolutely hyperbolic convergent for all $Z\in\mathbb{BC}$.
Thus, the proof is complete.
\end{proof}
\begin{remark}
Under specific parameter values, distinct cases can be delineated as follows:
\begin{enumerate}
    \item[(i)]  For $\mathcal{V}=0,C=1$ we obtain $ E_{0,1}(Z)=\exp(Z)$.
    \item[(ii)]  For $\mathcal{V}=1$, we get $E_{1,C}=\frac{1}{C}\left[\exp(CZ)-1\right]$.
    \item[(iii)] When $\mathcal{V}=2$, we obtain $E_{2,C}=\frac{1}{C^2}\left[\exp(CZ)-1-CZ\right]$.
    \item[(iv)] For  $\mathcal{V}=0,C=j$ we have, $E_{0,j}=\cos Z+j\sin Z$.
    \item[(v)] When $C=0$, we obtain $E_{\mathcal{V},0}=\frac{Z^\mathcal{V}}{\Gamma(\mathcal{V}+1)}$.
    \item [(vi)] For $\mathcal{V}=\frac{1}{2},C=1, Z=w^2_1$, we get $E_{\frac{1}{2},1}(w^2_1)=\frac{2}{\sqrt{\pi}}e^{w^2_1}$ Erf $w_1$,  where Erf $w_1$ is error function.
\end{enumerate}
\end{remark}
\begin{theo}
  Let  $Z=z_1e_1+z_2e_2, \mathcal{V}=\nu_1e_1+\nu_2e_2=\alpha +j\beta, C=c_1e_1+c_2e_2 \in{\mathbb{BC}}$, where $e_1=\frac{1+k}{2}$ and $e_2=\frac{1-k}{2}$. Then the bicomplex Miller Ross function satisfies the following recurrence relations:
  \begin{enumerate}
         \item[(i)] $ E_{\mathcal{V},C}(Z)=C^{-\mathcal{V}}E_{0,C}(Z)$ for $\alpha=-\frac{l_1+l_2}{2}$ and $\beta=\frac{l_1-l_2}{2i}$ where $l_1,l_2\in \mathbb{N}\cup \{0\}$;
         \item[(ii)] $Z^3E_{\mathcal{V},C}(Z)=CZ^3E_{\mathcal{V}+1,C}(Z)+(\mathcal{V}+1)(\mathcal{V}+2)(\mathcal{V}+3)\left[E_{\mathcal{V}+3,C}(Z)-CE_{\mathcal{V}+4,C}(Z)\right]$;
         \item[(iii)] $Z^2E_{\mathcal{V},C}(Z)=\frac{Z^{\mathcal{V}+2}}{\Gamma(\mathcal{V}+1)}+C^2Z^2E_{\mathcal{V}+2,C}(Z)+C(\mathcal{V}+2)(\mathcal{V}+3)\left[E_{\mathcal{V}+3,C}(Z)-CE_{\mathcal{V}+4,C}(Z)\right]$;
         \item[(iv)]  $Z^2 E_{\mathcal{V},C}(Z)=\frac{Z^{\mathcal{V}+2}}{\Gamma(\mathcal{V}+1)}+\frac{CZ^{\mathcal{V}+3}}{\Gamma(\mathcal{V}+2)}+C^3Z^2E_{\mathcal{V}+3,C}(Z)+C^2(\mathcal{V}+3)(\mathcal{V}+4)\left[E_{\mathcal{V}+4,C}(Z)-CE_{\mathcal{V}+5,C}(Z)\right]$.
      \end{enumerate}
\end{theo}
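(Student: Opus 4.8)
The plan is to reduce everything to the scalar (complex) Miller--Ross function by exploiting the idempotent representation \eqref{eq:fa}. Since addition, multiplication and division of bicomplex numbers are performed component-wise in the $e_1,e_2$ basis, the quantities $C$, $Z^{\mathcal V}$, $\Gamma(\mathcal V+r+1)$, and hence every term appearing in (i)--(iv), split into an $e_1$-part built from $(\nu_1,c_1,z_1)$ and an $e_2$-part built from $(\nu_2,c_2,z_2)$. Consequently each stated bicomplex identity is equivalent to the pair of identical complex identities for $E_{\nu,c}$ of \eqref{eq:5}, one in each idempotent component; so it suffices to prove the scalar versions and recombine with $e_1,e_2$.

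The central tool is a single-step recurrence, which I would obtain directly from \eqref{eq:5}. Writing $E_{\nu,c}(z)=\sum_{n\ge0}c^{n}z^{\nu+n}/\Gamma(\nu+n+1)$ and $c\,E_{\nu+1,c}(z)=\sum_{n\ge0}c^{n+1}z^{\nu+n+1}/\Gamma(\nu+n+2)$, the index shift $n\mapsto n-1$ in the second series and subtraction leave only the $n=0$ term, giving
\begin{equation}\label{eq:fund}
E_{\nu,c}(z)=c\,E_{\nu+1,c}(z)+\frac{z^{\nu}}{\Gamma(\nu+1)}.
\end{equation}
In bicomplex form this reads $E_{\mathcal V,C}(Z)-C\,E_{\mathcal V+1,C}(Z)=Z^{\mathcal V}/\Gamma(\mathcal V+1)$, so the bracketed combination $E_{\mathcal V+m,C}(Z)-C\,E_{\mathcal V+m+1,C}(Z)$ occurring in (ii)--(iv) is exactly \eqref{eq:fund} at shifted order $\mathcal V+m$ and equals $Z^{\mathcal V+m}/\Gamma(\mathcal V+m+1)$.

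With \eqref{eq:fund} the remaining relations are bookkeeping. For (ii) I would multiply \eqref{eq:fund} by $Z^{3}$ and rewrite the resulting $Z^{\mathcal V+3}/\Gamma(\mathcal V+1)$ via $\Gamma(\mathcal V+4)=(\mathcal V+1)(\mathcal V+2)(\mathcal V+3)\Gamma(\mathcal V+1)$ together with the bracket identity above. For (iii) and (iv) I would iterate \eqref{eq:fund} twice and three times respectively (each step lowering the explicit power of $C$ by one and contributing a term $C^{k}Z^{\nu+k}/\Gamma(\nu+k+1)$), multiply by $Z^{2}$, and convert the final closed term into the stated product of shifted factors using $\Gamma(\mathcal V+m+1)=(\mathcal V+m)\cdots(\mathcal V+1)\Gamma(\mathcal V+1)$; matching terms against the right-hand sides then yields (iii) and (iv).

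Finally, for (i) I would translate the hypotheses on $\alpha,\beta$. With $\nu_1=\alpha-i\beta$ and $\nu_2=\alpha+i\beta$, the conditions $\alpha=-\tfrac{l_1+l_2}{2}$, $\beta=\tfrac{l_1-l_2}{2i}$ give $\nu_1=-l_1$ and $\nu_2=-l_2$, both non-positive integers. In each complex component the first $l$ summands of \eqref{eq:5} vanish, because $1/\Gamma(\nu+n+1)=0$ at the poles $-l+n+1\le0$ for $n=0,\dots,l-1$; shifting the index by $l$ then gives $E_{-l,c}(z)=c^{l}e^{cz}=c^{-\nu}E_{0,c}(z)$. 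Recombining the two components with $e_1,e_2$ and using $C^{-\mathcal V}=c_1^{-\nu_1}e_1+c_2^{-\nu_2}e_2$ produces (i). The only genuinely delicate point is this last item---correctly accounting for the vanishing reciprocal-gamma terms at the poles and the parameter translation $(\alpha,\beta)\mapsto(\nu_1,\nu_2)$---whereas (ii)--(iv) are routine consequences of iterating \eqref{eq:fund}.
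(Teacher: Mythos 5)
Your proposal is correct and is essentially the paper's own argument: part (i) is proved identically (translate $(\alpha,\beta)$ into $\nu_1=-l_1$, $\nu_2=-l_2$, drop the terms where the reciprocal gamma vanishes at its poles, shift the summation index, and recombine with $e_1,e_2$), and for (ii)--(iv) the paper's key step is precisely your one-step recurrence $E_{\mathcal{V}+m,C}(Z)-C\,E_{\mathcal{V}+m+1,C}(Z)=Z^{\mathcal{V}+m}/\Gamma(\mathcal{V}+m+1)$, which the paper verifies inline as a telescoping difference of two series rather than isolating it as a lemma. The only difference is organizational: you derive (ii)--(iv) uniformly by iterating the single-step recurrence and multiplying by the appropriate power of $Z$, whereas the paper performs the equivalent series manipulation directly for (ii) at the idempotent-component level and then states that (iii) and (iv) follow by the same method.
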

\begin{proof}
(i) Since, $\alpha=-\frac{l_1+l_2}{2}$ and $\beta=\frac{l_1-l_2}{2i}$, idempotent components of $\mathcal{V}$ are given by $\nu_1=-l_1$ and $\nu_2=-l_2$ where $l_1,l_2\in \mathbb{N}\cup \{0\}$. From series representation of the bicomplex Miller Ross function \eqref{eq:gh}, we obtain
 \begin{align*}
     E_{\mathcal{V},C}(Z)&=Z^\mathcal{V}
  \sum_{r=0}^{\infty}\frac{(CZ)^r}{\Gamma{(\mathcal{V}+r+1)}}\\
  &={z_1}^{-l_1}\sum_{r=0}^{\infty}\frac{(c_1z_1)^r}{\Gamma{(-l_1+r+1)}}e_1+
  {z_2}^{-l_2}\sum_{r=0}^{\infty}\frac{(c_1z_1)^r}{\Gamma{(-l_2+r+1)}}e_2\\
  &={z_1}^{-l_1}\sum_{r=l_1}^{\infty}\frac{(c_1z_1)^r}{\Gamma{(-l_1+r+1)}}e_1+
  {z_2}^{-l_2}\sum_{r=l_2}^{\infty}\frac{(c_1z_1)^r}{\Gamma{(-l_2+r+1)}}e_2\\
  &={z_1}^{-l_1}\sum_{r=0}^{\infty}\frac{(c_1z_1)^{r+l_1}}{\Gamma{(r+1)}}e_1+
  {z_2}^{-l_2}\sum_{r=0}^{\infty}\frac{(c_1z_1)^{r+l_2}}{\Gamma{(r+1)}}e_2\;\;\left(\mbox{since}, \frac{1}{\Gamma(r)}=0\; \mbox{for}\; r\in{\mathbb{Z}-\mathbb{N}}\right)\\
  &={c_1}^{l_1}\sum_{r=0}^{\infty}\frac{(c_1z_1)^r}{\Gamma{(r+1)}}e_1+
  {c_2}^{l_2}\sum_{r=0}^{\infty}\frac{(c_1z_1)^r}{\Gamma{(r+1)}}e_2\\
  &=(c_1^{l_1}e_1+c_2^{l_2}e_2)\left[\sum_{r=0}^{\infty}\frac{(c_1z_1)^r}{\Gamma{(r+1)}}e_1+\sum_{r=0}^{\infty}\frac{(c_1z_1)^r}{\Gamma{(r+1)}}e_2\right]\\
  &=C^{-\mathcal{V}}E_{0,C}(Z).
 \end{align*}
 Thus, the proof of (i) is complete. \\
(ii) From the definition of Miller-Ross function \eqref{eq:5}, we have
     \begin{align}\label{eq:sl}
         z_1^3E_{\nu_1,c_1}(z_1)&={z_1}^{\nu_1+3}\sum_{r=0}^{\infty}\frac{(c_1z_1)^r}{\Gamma{(\nu_1+r+1)}}\nonumber\\
         &={z_1}^{\nu_1+3}\left[\frac{(\nu_1+1)(\nu_1+2)(\nu_1+3)}{\Gamma(\nu_1+4)}+\sum_{r=1}^{\infty}\frac{(c_1z_1)^n}{\Gamma{(\nu_1+r+1)}}\right]\nonumber\\
         &=(\nu_1+1)(\nu_1+2)(\nu_1+3){z_1}^{\nu_1+3}\left[\sum_{r=0}^{\infty}\frac{(c_1z_1)^r}{\Gamma{(\nu_1+r+4)}}-\sum_{r=0}^{\infty}\frac{(c_1z_1)^{r+1}}{\Gamma{(\nu_1+r+5)}}\right]\nonumber\\
         &\quad+{z_1}^{\nu_1+3}\sum_{r=0}^{\infty}\frac{(c_1z_1)^{r+1}}{\Gamma{(\nu_1+r+2)}}\nonumber\\
         &=(\nu_1+1)(\nu_1+2)(\nu_1+3)\left[E_{\nu_1+3,c_1}(z_1)-c_1E_{\nu_1+4,c_1}(z_1)\right]+c_1z_1^3E_{\nu_1+1,c_1}(z_1).
     \end{align}
     Similarly, we obtain
     \begin{align}\label{eq:co}
          z_2^3E_{\nu_2,c_2}(z_2)=(\nu_2+1)(\nu_2+2)(\nu_2+3)\left[E_{\nu_2+3,c_2}(z_2)-c_2E_{\nu_2+4,c_2}(z_2)\right]+c_2z_2^3E_{\nu_2+1,c_2}(z_2).
     \end{align}
     Using \eqref{eq:fa}, \eqref{eq:sl} and \eqref{eq:co}, we get
     \begin{align*}
         Z^3E_{\mathcal{V},C}(Z)&= z_1^3E_{\nu_1,c_1}(z_1)e_1+ z_2^3E_{\nu_2,c_2}(z_2)e_2\\
         &=\left[(\nu_1+1)(\nu_1+2)(\nu_1+3)\left[E_{\nu_1+3,c_1}(z_1)-c_1E_{\nu_1+4,c_1}(z_1)\right]+c_1z_1^3E_{\nu_1+1,c_1}(z_1)\right]e_1\\
         &\quad+\left[(\nu_2+1)(\nu_2+2)(\nu_2+3)\left[E_{\nu_2+3,c_2}(z_2)-c_2E_{\nu_2+4,c_2}(z_2)\right]+c_2z_2^3E_{\nu_2+1,c_2}(z_2)\right]e_2\\
         &=(\mathcal{V}+1)(\mathcal{V}+2)(\mathcal{V}+3)\left[E_{\mathcal{V}+3,C}(Z)-CE_{\mathcal{V}+4,C}(Z)\right]+CZ^3E_{\mathcal{V}+1,C}(Z).
    \end{align*}
    Hence, the proof of (ii) is complete. The relations (iii) and (iv) of this theorem can be proved by the similar method as used in the proof of (ii).
 \end{proof}
     \section{Integral representations of the bicomplex Miller Ross function}
 The integral representation of special functions unveils their deep connections to various mathematical domains, from physics to number theory. Through integral formulations, these functions become versatile tools in solving complex problems, offering insights into intricate phenomena. Their significance lies not only in computation but also in providing elegant solutions and profound insights into the underlying mathematical structures. In this section, we establish different type of integral representations of the bicomplex Miller-Ross function.
 \begin{theo}\label{th:ad}
    Assume that $Z=z_1e_1+z_2e_2, \mathcal{V}=\nu_1e_1+\nu_2e_2=\alpha +j\beta, C=c_1e_1+c_2e_2 \in{\mathbb{BC}}$, where $e_1=\frac{1+k}{2}$  and  $e_2=\frac{1-k}{2}$ with satisfies $\Re(\alpha)+1>|\Im(\beta)|$, the integral representation of bicomplex Miller-Ross function is given by
    \begin{align*}
        E_{\mathcal{V},C}(Z)=\frac{Z^{\mathcal{V}}}{\Gamma(\mathcal{V}+1)}\left[1+\sum_{r=1}^{\infty}\frac{1}{\Gamma(r)}\int_{D}\frac{(CZt)^r(1-t)^{\mathcal{V}}}{t}dt\right],
    \end{align*}
    where $D=(D_1(t_1),D_2(t_2))$ be the curve in $ \mathbb{BC}$ and $t=t_1e_1+t_2e_2\in\mathbb{BC}$ with $0\leq t_1,t_2\leq 1$.
 \end{theo}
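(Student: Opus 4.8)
The plan is to decouple the bicomplex statement into two independent complex identities via the idempotent calculus, prove the complex identity with the Euler Beta integral, and reassemble. By the idempotent representation \eqref{eq:fa} we have $E_{\mathcal{V},C}(Z)=E_{\nu_1,c_1}(z_1)e_1+E_{\nu_2,c_2}(z_2)e_2$, and by the termwise rule for bicomplex integration over $D=(D_1,D_2)$ recalled in Section~1, $\int_D = \left(\int_{D_1}\right)e_1+\left(\int_{D_2}\right)e_2$. Writing $1=e_1+e_2$, so that $1-t=(1-t_1)e_1+(1-t_2)e_2$, and using \eqref{eq:g} together with termwise powers gives $\frac{Z^{\mathcal V}}{\Gamma(\mathcal V+1)}=\frac{z_1^{\nu_1}}{\Gamma(\nu_1+1)}e_1+\frac{z_2^{\nu_2}}{\Gamma(\nu_2+1)}e_2$ and a corresponding idempotent split of the integrand $(CZt)^r(1-t)^{\mathcal V}/t$. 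Hence it suffices to establish, for each index $\ell\in\{1,2\}$ with $D_\ell$ the segment from $0$ to $1$, the scalar identity
\[
E_{\nu_\ell,c_\ell}(z_\ell)=\frac{z_\ell^{\nu_\ell}}{\Gamma(\nu_\ell+1)}\left[1+\sum_{r=1}^{\infty}\frac{1}{\Gamma(r)}\int_{D_\ell}\frac{(c_\ell z_\ell t)^r(1-t)^{\nu_\ell}}{t}\,dt\right].
\]

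The engine is the Euler Beta integral. For $r\ge 1$ and $\Re(\nu_\ell+1)>0$ I would record
\[
\int_0^1 t^{\,r-1}(1-t)^{\nu_\ell}\,dt=B(r,\nu_\ell+1)=\frac{\Gamma(r)\,\Gamma(\nu_\ell+1)}{\Gamma(\nu_\ell+r+1)},
\]
so that, after cancelling $t$ and extracting $(c_\ell z_\ell)^r$, each summand becomes $\frac{1}{\Gamma(r)}\int_{D_\ell}\frac{(c_\ell z_\ell t)^r(1-t)^{\nu_\ell}}{t}dt=\frac{(c_\ell z_\ell)^r\,\Gamma(\nu_\ell+1)}{\Gamma(\nu_\ell+r+1)}$. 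Substituting this back, the bracket multiplied by $\frac{z_\ell^{\nu_\ell}}{\Gamma(\nu_\ell+1)}$ equals $\frac{z_\ell^{\nu_\ell}}{\Gamma(\nu_\ell+1)}+z_\ell^{\nu_\ell}\sum_{r=1}^{\infty}\frac{(c_\ell z_\ell)^r}{\Gamma(\nu_\ell+r+1)}$, where the leading term is precisely the $r=0$ summand of the defining series \eqref{eq:5}; collecting the two pieces recovers $z_\ell^{\nu_\ell}\sum_{r=0}^{\infty}\frac{(c_\ell z_\ell)^r}{\Gamma(\nu_\ell+r+1)}=E_{\nu_\ell,c_\ell}(z_\ell)$.

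It then remains to verify the hypotheses on the components $\nu_1=\alpha-i\beta$ and $\nu_2=\alpha+i\beta$, which follow from the map $Z=w_1+jw_2\mapsto z_1=w_1-iw_2,\ z_2=w_1+iw_2$. The Beta integral converges at the endpoint $t=1$ exactly when $\Re(\nu_\ell+1)>0$; since $\Re(\nu_1)=\Re(\alpha)+\Im(\beta)$ and $\Re(\nu_2)=\Re(\alpha)-\Im(\beta)$, the two requirements $\Re(\nu_\ell+1)>0$ are jointly equivalent to $\Re(\alpha)+1>|\Im(\beta)|$, which is precisely the stated assumption. Reassembling the two scalar identities with the idempotent and termwise-integration rules of the first paragraph produces the claimed bicomplex formula.

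The step I expect to demand the most care is the interchange of summation and integration implicit in presenting the answer as a single series of integrals. I would justify it by appealing to the absolute hyperbolic convergence of $Z^{-\mathcal V}E_{\mathcal V,C}(Z)$ established in Section~2 together with the uniform bound $0\le\int_0^1 t^{\,r-1}(1-t)^{\nu_\ell}\,dt=B(r,\nu_\ell+1)$, which controls the integrated series termwise. A secondary point is confirming that each contour $D_\ell$ may be taken as, or deformed to, the real segment $[0,1]$ without altering the value, the integrand $t^{\,r-1}(1-t)^{\nu_\ell}$ being holomorphic away from its branch points at $t=0$ and $t=1$; once these two technical points are settled, the proof concludes.
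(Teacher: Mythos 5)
Your proposal is correct and takes essentially the same approach as the paper: decompose via the idempotent representation $E_{\mathcal{V},C}(Z)=E_{\nu_1,c_1}(z_1)e_1+E_{\nu_2,c_2}(z_2)e_2$, apply the Euler Beta integral $B(\nu_\ell+1,r)=\frac{\Gamma(\nu_\ell+1)\Gamma(r)}{\Gamma(\nu_\ell+r+1)}$ componentwise, and reassemble with the termwise rule for bicomplex integration. The only differences are cosmetic: you verify the identity starting from the integral side rather than deriving it from the series, and you explicitly (and correctly) translate the hypothesis $\Re(\alpha)+1>|\Im(\beta)|$ into the componentwise conditions $\Re(\nu_\ell+1)>0$, a point the paper leaves implicit.
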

 \begin{proof}
    From idempotent representation \eqref{eq:fa} of bicomplex Miller-Ross function, we have
    \begin{align*}
   &E_{\mathcal{V},C}(Z)\\&= E_{\nu_1,c_1}(z_1)e_1+E_{\nu_2,c_2}(z_2)e_2\\&={z_1}^{\nu_1}\left[\frac{1}{\Gamma(\nu_1+1)}+\sum_{r=1}^{\infty}\frac{(c_1z_1)^r}{\Gamma{(\nu_1+r+1)}}\right]e_1+{z_2}^{\nu_2}\left[\frac{1}{\Gamma(\nu_2+1)}+\sum_{r=1}^{\infty}\frac{(c_2z_2)^r}{\Gamma{(\nu_2+r+1)}}\right]e_2\\&={z_1}^{\nu_1}\left[\frac{1}{\Gamma(\nu_1+1)}+\sum_{r=1}^{\infty}\frac{\beta(\nu_1+1,r)(c_1z_1)^r}{\Gamma(\nu_1+1)\Gamma(r)}\right]e_1+{z_2}^{\nu_2}\left[\frac{1}{\Gamma(\nu_2+1)}+\sum_{r=1}^{\infty}\frac{\beta(\nu_2+1,r)(c_2z_2)^r}{\Gamma(\nu_2+1)\Gamma(r)}\right]e_2\\&=\frac{z_1^{\nu_1}}{\Gamma(\nu_1+1)}\left[1+\sum_{r=1}^{\infty}\frac{(c_1z_1)^r}{\Gamma(r)}\int_{0}^{1}t_1^{r-1}(1-t_1)^{\nu_1}dt_1\right]e_1\\&\quad+\frac{z_2^{\nu_2}}{\Gamma(\nu_2+1)}\left[1+\sum_{r=1}^{\infty}\frac{(c_2z_2)^r}{\Gamma(r)}\int_{0}^{1}t_2^{n-1}(1-t_2)^{\nu_2}dt_2\right]e_2\\
   &=\frac{z_1^{\nu_1}}{\Gamma(\nu_1+1)}\left[1+\sum_{r=1}^{\infty}\frac{1}{\Gamma(r)}\int_{D_1}\frac{(c_1z_1t_1)^{r}(1-t_1)^{\nu_1}}{t_1}dt_1\right]e_1\\&\quad+\frac{z_2^{\nu_2}}{\Gamma(\nu_2+1)}\left[1+\sum_{r=1}^{\infty}\frac{1}{\Gamma(r)}\int_{D_2}\frac{(c_2z_2t_2)^{r}(1-t_2)^{\nu_2}}{t_2}dt_2\right]e_2\\
  % &=\frac{z_1^{\nu_1}e_1+z_2^{\nu_2}}{\Gamma(\nu_1)e_1+\Gamma(\nu_2)e_2}\sum_{n=0}^{\infty}\frac{1}{n!}\left[\int_{C_1}(c_1z_1t_1)^n(1-t_1)^{\nu_1-1}dt_1e_1+\int_{C_2}(c_2z_2t_2)^n(1-t_2)^{\nu_2-1}dt_2e_2\right]\\
   &=\frac{Z^{\mathcal{V}}}{\Gamma(\mathcal{V}+1)}\left[1+\sum_{r=1}^{\infty}\frac{1}{\Gamma(r)}\int_{D}\frac{(CZt)^r(1-t)^{\mathcal{V}}}{t}dt\right].
\end{align*}
Hence, the proof is completed.
 \end{proof}
 \begin{corollary}\label{co:s1}
     Setting $\mathcal{V}=0,C=1$ in Theorem \ref{th:ad}, we obtain the integral representation of $\exp(Z)$ as follows
     \begin{align*}
         \exp(Z)=1+\sum_{r=1}^{\infty}\frac{1}{\Gamma(r)}\int_{D}\frac{(Zt)^r}{t}dt,
     \end{align*}
     where $D=(D_1(t_1),D_2(t_2))$ is the curve in $ \mathbb{BC}$ and $t=t_1e_1+t_2e_2\in\mathbb{BC}$ with $0\leq t_1,t_2\leq 1$.
 \end{corollary}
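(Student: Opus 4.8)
The plan is to obtain this as an immediate specialization of Theorem \ref{th:ad}, so the entire proof reduces to substituting the parameter values and simplifying. First I would verify that the hypothesis of Theorem \ref{th:ad} remains valid for the chosen parameters. Writing $\mathcal{V}=\alpha+j\beta$, the choice $\mathcal{V}=0$ forces $\alpha=0$ and $\beta=0$, so the required inequality becomes $\Re(\alpha)+1=1>0=|\Im(\beta)|$, which holds; hence the integral representation may be applied without restriction.

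Next I would substitute $\mathcal{V}=0$ and $C=1$ directly into the formula
\begin{align*}
    E_{\mathcal{V},C}(Z)=\frac{Z^{\mathcal{V}}}{\Gamma(\mathcal{V}+1)}\left[1+\sum_{r=1}^{\infty}\frac{1}{\Gamma(r)}\int_{D}\frac{(CZt)^r(1-t)^{\mathcal{V}}}{t}dt\right]
\end{align*}
supplied by Theorem \ref{th:ad}. The prefactor collapses because $Z^{\mathcal{V}}=Z^{0}=1$ and $\Gamma(\mathcal{V}+1)=\Gamma(1)=1$, giving $\frac{Z^{\mathcal{V}}}{\Gamma(\mathcal{V}+1)}=1$. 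Inside the integral the weight $(1-t)^{\mathcal{V}}=(1-t)^{0}=1$ disappears and $(CZt)^{r}=(Zt)^{r}$, so the integrand simplifies to $\frac{(Zt)^{r}}{t}$. This already produces the right-hand side of the asserted identity.

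Finally, I would identify the left-hand side using item (i) of the Remark following the first theorem, namely $E_{0,1}(Z)=\exp(Z)$. Combining this with the simplification above completes the argument. I do not anticipate any genuine obstacle here: the only point deserving care is confirming that the power and Gamma factors carrying the $\mathcal{V}$-dependence all degenerate to unity at $\mathcal{V}=0$ in both idempotent components simultaneously, which they do, so that the curve $D=(D_{1}(t_{1}),D_{2}(t_{2}))$ and the hyperbolic convergence established in the preceding results carry over unchanged.
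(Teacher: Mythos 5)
Your proposal is correct and coincides with the paper's own (implicit) argument: the corollary is obtained exactly by substituting $\mathcal{V}=0$, $C=1$ into Theorem \ref{th:ad}, noting that $Z^{\mathcal{V}}/\Gamma(\mathcal{V}+1)$ and $(1-t)^{\mathcal{V}}$ collapse to $1$ in both idempotent components, and identifying $E_{0,1}(Z)=\exp(Z)$. Your additional check that the hypothesis $\Re(\alpha)+1>|\Im(\beta)|$ holds trivially at $\mathcal{V}=0$ is a sound, if routine, refinement.
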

 \begin{remark}
     Substituting $Z=w_1\in\mathbb{C}$ in Corollary \ref{co:s1}, we get
     \begin{align*}
         \exp(w_1)&=1+\sum_{r=1}^{\infty}\frac{1}{\Gamma(r)}\int_{D}\frac{(w_1t)^r}{t}dt\\
         &=1+\sum_{r=1}^{\infty}\frac{1}{\Gamma(r)}\left[\int_{0}^1\frac{(w_1t_1)^r}{t_1}dt_1e_1+\int_{0}^1\frac{(w_1t_2)^r}{t_2}dt_2e_2\right]\\
         &=1+\sum_{r=1}^{\infty}\frac{1}{\Gamma(r)}\left[\int_{0}^1\frac{(w_1u)^r}{u}du(e_1+e_2)\right]\\
         &=1+\left[\int_0^1\sum_{n=1}^{\infty}\frac{(w_1u)^n}{\Gamma(n)u}du\right]\\
         &=1+w_1\int_0^1e^{w_1u}du.
     \end{align*}
     In (\cite{sp_function}, chapter 7) the authors discussed the integral representation of confluent hypergeometric function ${}_1F_1(a;b;z)$. Setting $a=1$ and $b=2$ in (\cite{sp_function}, page 124, equation (9)), we get the integral representation of $\exp(w_1)$, which is same as our result.
 \end{remark}
 \begin{corollary}\label{co:t2}
     Setting  $\mathcal{V}=0,C=j$ in Theorem \ref{th:ad}, we have integral representation of
      $\mathbb{E}_{0,j}=\cos Z+j \sin Z$, given by
      \begin{align}
         \cos Z+j \sin Z=1+ \sum_{r=1}^{\infty}\frac{(j)^r}{\Gamma(r)}\int_{D}\frac{(Zt)^r}{t}dt,
      \end{align}
      where $D=(D_1(t_1),D_2(t_2))$ be the curve in $ \mathbb{BC}$ and $t=t_1e_1+t_2e_2\in\mathbb{BC}$ with $0\leq t_1,t_2\leq 1$.
       Now, equating both sides of the equation \eqref{co:t2}, we have
       \begin{align*}
           &\cos Z=1+ \sum_{r=1}^{\infty}\frac{(-1)^r}{\Gamma(2r)}\int_{D}\frac{(Zt)^{2r}}{t}dt,\\
           &\sin Z= \sum_{r=1}^{\infty}\frac{(-1)^{r-1}}{\Gamma(2r-1)}\int_{D}\frac{(Zt)^{2r-1}}{t}dt.
       \end{align*}
 \end{corollary}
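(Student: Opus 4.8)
The plan is to specialize Theorem \ref{th:ad} to the parameters $\mathcal{V}=0$ and $C=j$, and then to separate the resulting series according to the parity of the summation index. First I would check the hypothesis of Theorem \ref{th:ad}: for $\mathcal{V}=0$ we have $\alpha=0$ and $\beta=0$, so $\Re(\alpha)+1=1>0=|\Im(\beta)|$ holds. Substituting $\mathcal{V}=0$ makes the prefactor trivial, since $Z^{\mathcal{V}}=1$, $\Gamma(\mathcal{V}+1)=\Gamma(1)=1$ and $(1-t)^{\mathcal{V}}=1$; moreover, because $\mathbb{BC}$ is commutative, $(CZt)^r=(jZt)^r=j^r(Zt)^r$. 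This reduces the integral representation to
\begin{align*}
  E_{0,j}(Z)=1+\sum_{r=1}^{\infty}\frac{j^r}{\Gamma(r)}\int_{D}\frac{(Zt)^r}{t}\,dt,
\end{align*}
and invoking item (iv) of the preceding Remark, namely $E_{0,j}(Z)=\cos Z+j\sin Z$, yields the first displayed identity of the corollary.

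For the split, I would use $j^2=-1$ to compute $j^{2m}=(-1)^m$ on the even indices $r=2m$ and $j^{2m-1}=(-1)^{m-1}j$ on the odd indices $r=2m-1$. Collecting the even- and odd-indexed terms and relabelling gives
\begin{align*}
  \cos Z+j\sin Z=A(Z)+j\,B(Z),
\end{align*}
where $A(Z)=1+\sum_{r=1}^{\infty}\frac{(-1)^r}{\Gamma(2r)}\int_{D}\frac{(Zt)^{2r}}{t}\,dt$ and $B(Z)=\sum_{r=1}^{\infty}\frac{(-1)^{r-1}}{\Gamma(2r-1)}\int_{D}\frac{(Zt)^{2r-1}}{t}\,dt$ are exactly the two series in the claim.

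The main obstacle is justifying that one may \emph{equate both sides}, i.e. read off $\cos Z=A(Z)$ and $\sin Z=B(Z)$; this is not automatic, since $A(Z)$, $B(Z)$, $\cos Z$, $\sin Z$ are all bicomplex-valued and a decomposition $X+jY$ with $X,Y\in\mathbb{BC}$ is not unique. I would resolve this by a parity argument, noting that the curve $D$ is fixed and independent of $Z$. Since $((-Z)t)^r=(-1)^r(Zt)^r$, we have $\int_{D}\frac{((-Z)t)^{r}}{t}\,dt=(-1)^r\int_{D}\frac{(Zt)^{r}}{t}\,dt$, so $A$ involves only even powers of $Z$ and $B$ only odd powers; hence $A(-Z)=A(Z)$ and $B(-Z)=-B(Z)$. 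Combined with the known parities $\cos(-Z)=\cos Z$ and $\sin(-Z)=-\sin Z$, replacing $Z$ by $-Z$ in the previous display gives $\cos Z-j\sin Z=A(Z)-j\,B(Z)$. Adding and subtracting the two relations yields $2\cos Z=2A(Z)$ and $2j\sin Z=2j\,B(Z)$. Finally, since $j\notin\mathbb{O}_2$ is invertible with $j^{-1}=-j$, I may cancel the factor $j$ to conclude $\cos Z=A(Z)$ and $\sin Z=B(Z)$, which are the two asserted formulas. As an alternative to the parity argument, one can evaluate $\int_{0}^{1}t_\ell^{\,r-1}\,dt_\ell=\tfrac{1}{r}$ componentwise, so that $\frac{1}{\Gamma(r)}\int_{D}\frac{(Zt)^r}{t}\,dt=\frac{Z^r}{\Gamma(r+1)}$, and then recognize $A(Z)$ and $B(Z)$ directly as the Maclaurin series of $\cos Z$ and $\sin Z$.
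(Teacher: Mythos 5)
Your proposal is correct, and its first half is exactly the paper's (implicit) argument: specialize Theorem \ref{th:ad} to $\mathcal{V}=0$, $C=j$, note that the prefactor collapses and $(jZt)^r=j^r(Zt)^r$, identify the left side via Remark (iv) as $\cos Z+j\sin Z$, and split the series over even and odd $r$ using $j^{2m}=(-1)^m$, $j^{2m-1}=(-1)^{m-1}j$. Where you go beyond the paper is in the final step: the paper simply says ``equating both sides'' to pass from $\cos Z+j\sin Z=A(Z)+jB(Z)$ to $\cos Z=A(Z)$ and $\sin Z=B(Z)$, offering no justification, and you correctly observe that this step is not automatic, since a decomposition $X+jY$ with $X,Y\in\mathbb{BC}$ (rather than $X,Y\in\mathbb{C}(i)$) is not unique. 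Your parity argument --- $A$ even and $B$ odd in $Z$, combined with $\cos(-Z)=\cos Z$, $\sin(-Z)=-\sin Z$ and the invertibility of $j$ --- closes this gap cleanly, and your alternative route (evaluating $\int_{D}t^{r-1}\,dt=\tfrac1r$ componentwise so that each series is literally the Maclaurin series of $\cos Z$ or $\sin Z$) is even more direct and shows the identities are equivalent to the power-series definitions. So your write-up is a strict improvement in rigor over the paper's one-line derivation, at the cost of a little extra length; either of your two justifications would be a worthwhile addition to the corollary as stated.
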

 \begin{remark}
     Setting $Z=w_1\in\mathbb{C}$ in Corollary \ref{co:t2}, we observed that $$\cos w_1=1+ \sum_{r=1}^{\infty}\frac{(-1)^r}{\Gamma(2r)}\int_{D}\frac{(w_1t)^{2r}}{t}dt \;\;\mbox{and}\;\; \sin w_1=\sum_{r=1}^{\infty}\frac{(-1)^{r-1}}{\Gamma(2r-1)}\int_{D}\frac{(w_1t)^{2r-1}}{t}dt.$$ Similarly setting $a=1,b=2$ and $z=iw_1$ in the integral representation of ${}_1F_1(a;b;z)$  (\cite{sp_function}, page 124, equation (9)), we obtain integral representation of $\cos w_1$ and $\sin w_1$, which coincides with the result discussed in our context.
 \end{remark}
 \begin{theo}\label{th:mn}
     Assume that $Z=z_1e_1+z_2e_2, \mathcal{V}=\nu_1e_1+\nu_2e_2=\alpha_1 +j\alpha_2, M=\mu_1e_1+\mu_2e_2=\beta_1+j\beta_2, C=c_1e_1+c_2e_2 \in{\mathbb{BC}}$, where $e_1=\frac{1+k}{2}, e_2=\frac{1-k}{2}$ and satisfies the conditions $\Re(\alpha_1)>|\Im(\alpha_2)|,\Re(\beta_1)>|\Im(\beta_2)|$, the integral representation of bicomplex Miller-Ross function is given by
     \begin{align*}
         E_{\mathcal{V}+M,C}(Z)=\frac{Z^{\mathcal{V}+M}}{\Gamma(\mathcal{V}\Gamma(M)}\sum_{r=0}^{\infty}\frac{(CZ)^r}{r!}\int_A\int_Ds^{\mathcal{V}-1}(1-s)^{M+r}t^{M-1}(1-t)^rdsdt,
     \end{align*}
     where $A=(A_1(s_1),A_2(s_2))$ and $D=(D_1(t_1),D_2(t_2))$ are two curves in $\mathbb{BC}$ with $0\leq s_1,s_2,t_1,t_2\leq1$.
 \end{theo}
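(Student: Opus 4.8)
The plan is to follow the idempotent-decomposition strategy that drives the defining theorem and Theorem~\ref{th:ad}: reduce the bicomplex identity to a pair of complex-variable identities for the two idempotent components, and then reassemble. First I would invoke the idempotent representation \eqref{eq:fa}, which yields
\[
E_{\mathcal{V}+M,C}(Z)=E_{\nu_1+\mu_1,c_1}(z_1)\,e_1+E_{\nu_2+\mu_2,c_2}(z_2)\,e_2,
\]
where $\nu_1=\alpha_1-i\alpha_2,\ \nu_2=\alpha_1+i\alpha_2,\ \mu_1=\beta_1-i\beta_2,\ \mu_2=\beta_1+i\beta_2$. Thus it suffices to prove, for each $m\in\{1,2\}$, the complex component identity
\[
E_{\nu_m+\mu_m,c_m}(z_m)=\frac{z_m^{\nu_m+\mu_m}}{\Gamma(\nu_m)\Gamma(\mu_m)}\sum_{r=0}^{\infty}\frac{(c_mz_m)^r}{r!}\int_0^1\!\!\int_0^1 s^{\nu_m-1}(1-s)^{\mu_m+r}t^{\mu_m-1}(1-t)^r\,ds\,dt,
\]
and then recombine using \eqref{eq:g}, the resulting multiplicativity $\Gamma(\mathcal{V})\Gamma(M)=\Gamma(\nu_1)\Gamma(\mu_1)e_1+\Gamma(\nu_2)\Gamma(\mu_2)e_2$, the idempotent power rule $z_1^{\nu_1+\mu_1}e_1+z_2^{\nu_2+\mu_2}e_2=Z^{\mathcal{V}+M}$, and the component-wise definition of the bicomplex (double) integral, $\int_A\int_D=\left[\int_{A_1}\int_{D_1}\right]e_1+\left[\int_{A_2}\int_{D_2}\right]e_2$.

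The heart of the argument is a single beta-function identity. Starting from the complex Miller--Ross series \eqref{eq:5}, $E_{\nu_m+\mu_m,c_m}(z_m)=z_m^{\nu_m+\mu_m}\sum_{r\ge0}(c_mz_m)^r/\Gamma(\nu_m+\mu_m+r+1)$, I would rewrite the reciprocal gamma factor as a product of two beta integrals, chained so that the denominator climbs from $\Gamma(\mu_m+r+1)$ up to $\Gamma(\nu_m+\mu_m+r+1)$. Using $B(p,q)=\Gamma(p)\Gamma(q)/\Gamma(p+q)$,
\[
\int_0^1 t^{\mu_m-1}(1-t)^r\,dt=\frac{\Gamma(\mu_m)\,r!}{\Gamma(\mu_m+r+1)},\qquad \int_0^1 s^{\nu_m-1}(1-s)^{\mu_m+r}\,ds=\frac{\Gamma(\nu_m)\Gamma(\mu_m+r+1)}{\Gamma(\nu_m+\mu_m+r+1)};
\]
multiplying these, the factor $\Gamma(\mu_m+r+1)$ cancels and gives
\[
\frac{1}{\Gamma(\nu_m+\mu_m+r+1)}=\frac{1}{\Gamma(\nu_m)\Gamma(\mu_m)\,r!}\int_0^1\!\!\int_0^1 s^{\nu_m-1}(1-s)^{\mu_m+r}t^{\mu_m-1}(1-t)^r\,ds\,dt.
\]
Substituting this into the series produces the required component identity. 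The convergence hypotheses enter exactly here: since $\Re(\nu_m)=\Re(\alpha_1)\pm\Im(\alpha_2)$ and $\Re(\mu_m)=\Re(\beta_1)\pm\Im(\beta_2)$, the assumptions $\Re(\alpha_1)>|\Im(\alpha_2)|$ and $\Re(\beta_1)>|\Im(\beta_2)|$ are precisely what force $\Re(\nu_m)>0$ and $\Re(\mu_m)>0$, so that both beta integrals converge.

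The one genuine technical point, and the step I expect to require the most care, is justifying the interchange of the infinite sum with the double integral when passing from the series to the integral representation. I would handle this using the absolute hyperbolic convergence of $Z^{-\mathcal{V}}E_{\mathcal{V},C}(Z)$ established in the first theorem of Section~2, together with the nonnegativity of the integrands on $[0,1]^2$, which permits a term-by-term application of Tonelli's theorem (or dominated convergence) on each component curve. The remaining manipulations, namely the bookkeeping of the powers of $z_m$ and the reassembly of the $e_1,e_2$ parts into the bicomplex objects $Z^{\mathcal{V}+M}$, $\Gamma(\mathcal{V})\Gamma(M)$ and the contour $\int_A\int_D$, are routine and follow the template already used in the proof of Theorem~\ref{th:ad}.
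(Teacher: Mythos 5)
Your proposal is correct and follows essentially the same route as the paper: idempotent decomposition into the two complex components, replacement of each reciprocal gamma factor by the chained pair of beta integrals $B(\nu_m,\mu_m+r+1)\,B(\mu_m,r+1)$ (with the intermediate $\Gamma(\mu_m+r+1)$ cancelling), and reassembly of the $e_1,e_2$ parts into $Z^{\mathcal{V}+M}$, $\Gamma(\mathcal{V})\Gamma(M)$ and the bicomplex double integral over $A$ and $D$. The only point where you diverge is the claimed need to justify interchanging the sum and the double integral; this is actually unnecessary, since in the target formula the sum stays outside the integrals, so the argument is a purely term-by-term substitution of the beta identity, exactly as in the paper.
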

 \begin{proof}
   Using \eqref{eq:gh}, we obtain
   \begin{align}\label{eq:zm}
        &E_{\mathcal{V}+M,C}(Z)\\&=Z^{\mathcal{V}+M}\sum_{r=0}^{\infty}\frac{(CZ)^n}{\Gamma{(\mathcal{V}+M+r+1)}}\nonumber\\
        &={z_1}^{\nu_1+\mu_1}\sum_{r=0}^{\infty}\frac{(c_1z_1)^r}{\Gamma{(\nu_1+\mu_1+r+1)}}e_1+{z_2}^{\nu_2+\mu_2}\sum_{r=0}^{\infty}\frac{(c_2z_2)^r}{\Gamma{(\nu_2+\mu_2+r+1)}}e_2\nonumber\\
        &=\frac{{z_1}^{\nu_1+\mu_1}}{\Gamma(\nu_1)\Gamma(\mu_1)}\sum_{r=0}^{\infty}\frac{\Gamma(\nu_1)\Gamma(\mu_1+r+1)}{\Gamma{(\nu_1+\mu_1+r+1)}}\frac{\Gamma(\mu_1)\Gamma(r+1)}{\Gamma(\mu_1+r+1)}\frac{(c_1z_1)^r}{r!}e_1\nonumber\\
        &\quad+\frac{{z_2}^{\nu_2+\mu_2}}{\Gamma(\nu_2)\Gamma(\mu_2)}\sum_{r=0}^{\infty}\frac{\Gamma(\nu_2)\Gamma(\mu_2+r+1)}{\Gamma{(\nu_2+\mu_2+r+1)}}\cdot\frac{\Gamma(\mu_2)\Gamma(r+1)}{\Gamma(\mu_2+r+1)}\frac{(c_2z_2)^r}{r!}e_2\nonumber\\
        &=\frac{{z_1}^{\nu_1+\mu_1}}{\Gamma(\nu_1)\Gamma(\mu_1)}\sum_{r=0}^{\infty}B(\nu_1,\mu_1+r+1)B(\mu_1,r+1)\frac{(c_1z_1)^r}{r!}e_1\nonumber\\
        &\quad+\frac{{z_2}^{\nu_2+\mu_2}}{\Gamma(\nu_2)\Gamma(\mu_2)}\sum_{r=0}^{\infty}B(\nu_2,\mu_2+r+1)B(\mu_2,r+1)\frac{(c_2z_2)^r}{r!}e_2\nonumber\\
       & =\frac{{z_1}^{\nu_1+\mu_1}}{\Gamma(\nu_1)\Gamma(\mu_1)}\sum_{r=0}^{\infty}\int_0^1s_1^{\nu_1-1}(1-s_1)^{\mu_1+r}ds_1\int_0^1t_1^{\mu_1-1}(1-t_1)^rdt_1\frac{(c_1z_1)^r}{r!}e_1\nonumber\\
       &\quad+\frac{{z_2}^{\nu_2+\mu_2}}{\Gamma(\nu_2)\Gamma(\mu_2)}\sum_{r=0}^{\infty}\int_0^1s_2^{\nu_2-1}(1-s_2)^{\mu_2+r}ds_2\int_0^1t_2^{\mu_2-1}(1-t_2)^rdt_2\frac{(c_2z_2)^r}{r!}e_2.
   \end{align}
Let us consider two curves $A$ and $D$ in $\mathbb{BC}$, whose parametric representations are $A=(A_1(s_1),A_2(s_2))$ and $D=(D_1(t_1),D_2(t_2))$ respectively and $s=s_1e_1+s_2e_2, t=t_1e_1+t_2e_2\in{\mathbb{BC}}$ which satisfies the condition $0\leq s_1,s_2,t_1,t_2\leq1$. Now from equation \eqref{eq:zm}, we obtain
\begin{align*}
    & E_{\mathcal{V}+M,C}(Z)\\&= \left[\frac{{z_1}^{\nu_1+\mu_1}}{\Gamma(\nu_1)\Gamma(\mu_1)}e_1+\frac{{z_2}^{\nu_2+\mu_2}}{\Gamma(\nu_2)\Gamma(\mu_2)}e_2\right]\\&\quad\times\sum_{r=0}^{\infty}\int_A(s_1e_1+s_2e_2)^{\nu_1e_1+\nu_2e_2-1}(1-s_1e_1-s_2e_2)^{\mu_1e_1+\mu_2e_2+r}d(s_1e_1+s_2e_2)\\
     &\hspace{40pt}\cdot\int_D(t_1e_1+t_2e_2)^{\mu_1e_1+\mu_2e_2-1}(1-t_1e_1-t_2e_2)^rd(t_1e_1+t_2e_2)\left[\frac{(c_1z_1)^r}{r!}e_1+\frac{(c_2z_2)^r}{r!}e_2\right]\\
     &=\frac{Z^{\mathcal{V}+M}}{\Gamma(\mathcal{V})\Gamma(M)}\sum_{r=0}^{\infty}\frac{(CZ)^r}{r!}\int_A\int_Ds^{\mathcal{V}-1}(1-s)^{M+r}t^{M-1}(1-t)^rdsdt,
\end{align*}
 which completes the proof of the theorem.
 \end{proof}
 \begin{corollary}
     Substituting of $\mathcal{V}=1$ and $M=1$ in Theorem\ref{th:mn}, we have
     \begin{align*}
         \exp(CZ)=1+CZ+\sum_{r=0}^{\infty}\frac{(CZ)^{r+2}}{r!}\int_A\int_D(1-s)^{r+1}(1-t)^rdsdt.
     \end{align*}
 \end{corollary}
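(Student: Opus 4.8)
The plan is to treat this corollary as a pure specialization of Theorem \ref{th:mn}, followed by recognizing the left-hand side through its known closed form. No new machinery is needed; the whole argument is a substitution together with one algebraic rearrangement, so I would keep it short and mostly point to the antecedents.

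First I would verify that the choice $\mathcal{V}=1$, $M=1$ is admissible, i.e. that it satisfies the hypotheses $\Re(\alpha_1)>|\Im(\alpha_2)|$ and $\Re(\beta_1)>|\Im(\beta_2)|$ of Theorem \ref{th:mn}. Writing $\mathcal{V}=1=1+j\cdot 0$ and $M=1=1+j\cdot 0$ gives $\alpha_1=\beta_1=1$ and $\alpha_2=\beta_2=0$, so both conditions read $1>0$ and hold. Next I would substitute these values into the formula of Theorem \ref{th:mn}. Since $\mathcal{V}+M=2$, $\Gamma(\mathcal{V})=\Gamma(M)=\Gamma(1)=1$, $s^{\mathcal{V}-1}=s^0=1$, $t^{M-1}=t^0=1$, and $(1-s)^{M+r}=(1-s)^{r+1}$, the prefactor $\frac{Z^{\mathcal{V}+M}}{\Gamma(\mathcal{V})\Gamma(M)}$ collapses to $Z^2$, and the integral representation becomes
\begin{align*}
    E_{2,C}(Z)=Z^2\sum_{r=0}^{\infty}\frac{(CZ)^r}{r!}\int_A\int_D(1-s)^{r+1}(1-t)^r\,ds\,dt.
\end{align*}

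Then I would identify the left-hand side using the special value recorded in Remark (iii), namely $E_{2,C}(Z)=C^{-2}\left[\exp(CZ)-1-CZ\right]$. Multiplying both sides by $C^2$ and absorbing the factor into the series through the identity $C^2Z^2(CZ)^r=(CZ)^{r+2}$ yields
\begin{align*}
    \exp(CZ)-1-CZ=\sum_{r=0}^{\infty}\frac{(CZ)^{r+2}}{r!}\int_A\int_D(1-s)^{r+1}(1-t)^r\,ds\,dt,
\end{align*}
and transposing the terms $1$ and $CZ$ gives exactly the asserted identity. The only point requiring care, and the nearest thing to an obstacle, is the invertibility of $C$ that is implicit in passing from $E_{2,C}(Z)$ to the closed expression $C^{-2}[\exp(CZ)-1-CZ]$: this step presupposes $C\notin\mathbb{O}_2$, so that $C^{-2}$ exists in $\mathbb{BC}$. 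If one wishes to avoid this hypothesis altogether, I would instead expand $E_{2,C}(Z)$ directly from the defining series \eqref{eq:gh} and match coefficients, but invoking Remark (iii) is the cleanest route. Everything else is routine specialization, so I expect the proof to occupy only a few lines.
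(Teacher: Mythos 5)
Your proof is correct and is essentially the paper's own route: the corollary is nothing more than the specialization $\mathcal{V}=M=1$ of Theorem \ref{th:mn}, which collapses the prefactor and the integrand exactly as you compute, followed by rewriting $E_{2,C}(Z)$ via the closed form of item (iii) of the Remark, i.e. $C^{2}E_{2,C}(Z)=\exp(CZ)-1-CZ$. Your added caveat that invoking the Remark literally presupposes $C\notin\mathbb{O}_2$, together with the fix of verifying $C^{2}E_{2,C}(Z)=\exp(CZ)-1-CZ$ directly from the defining series (which needs no inverse of $C$), is a sensible refinement but does not change the argument.
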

  \begin{theo}\label{th:w9}
      Let  $Z=z_1e_1+z_2e_2, \mathcal{V}=\nu_1e_1+\nu_2e_2=\alpha +j\beta, C=c_1e_1+c_2e_2 \in{\mathbb{BC}}$ with $\Re(\alpha)+1>|\Im(\beta)|$, then  bicomplex Miller-Ross function can be expresses as
     \begin{align*}
         E_{\mathcal{V},C}(Z)=Z^\mathcal{V}
  \sum_{r=0}^{\infty}\frac{(CZ)^r}{\int_{\gamma}e^{-t}t^{\mathcal{V}+r}dt},
     \end{align*}
    where $t=t_1e_1+t_2e_2\in\mathbb{BC}$ and $\gamma=(\gamma_1,\gamma_2)$ be a curve in $ \mathbb{BC}$, whose parametric equation is $\gamma(t)=(\gamma(t_1),\gamma(t_2))$ with $0<t_1,t_2<\infty$.
 \end{theo}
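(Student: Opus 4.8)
The plan is to reduce the statement to the classical Euler integral representation of the gamma function, applied separately in each idempotent component. The starting point is the idempotent representation \eqref{eq:fa}, namely $E_{\mathcal{V},C}(Z)=E_{\nu_1,c_1}(z_1)e_1+E_{\nu_2,c_2}(z_2)e_2$, together with the series definition \eqref{eq:gh}. The crucial observation is that the denominator $\int_{\gamma}e^{-t}t^{\mathcal{V}+r}\,dt$ is nothing but the bicomplex gamma function $\Gamma(\mathcal{V}+r+1)$ written through its Euler integral of the second kind, so the whole theorem amounts to substituting this integral form of the gamma factor into the defining series.

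First I would pass to the two complex components. In the $e_1$-component the generic term carries the factor $1/\Gamma(\nu_1+r+1)$, and I would replace the gamma by Euler's integral $\Gamma(\nu_1+r+1)=\int_0^\infty e^{-t_1}t_1^{\nu_1+r}\,dt_1$, which converges precisely when $\Re(\nu_1+r+1)>0$. Recalling that $\mathcal{V}=\alpha+j\beta$ corresponds to $\nu_1=\alpha-i\beta$ and $\nu_2=\alpha+i\beta$, one has $\Re(\nu_1)=\Re(\alpha)+\Im(\beta)$ and $\Re(\nu_2)=\Re(\alpha)-\Im(\beta)$. The hypothesis $\Re(\alpha)+1>|\Im(\beta)|$ is exactly what guarantees $\Re(\nu_1+1)>0$ and $\Re(\nu_2+1)>0$, so every gamma integral in both components converges for all $r\ge 0$; the identical substitution is carried out in the $e_2$-component with the variable $t_2$.

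Next I would reassemble the two scalar integrals into a single bicomplex contour integral. Writing $t=t_1e_1+t_2e_2$ over the curve $\gamma=(\gamma_1,\gamma_2)$ with $0<t_1,t_2<\infty$, and invoking the term-by-term definition of bicomplex integration $\int_{\gamma}f\,dt=\bigl[\int_{\gamma_1}\cdots\bigr]e_1+\bigl[\int_{\gamma_2}\cdots\bigr]e_2$, together with \eqref{eq:g} in the form $\Gamma(\nu_1+r+1)e_1+\Gamma(\nu_2+r+1)e_2=\Gamma(\mathcal{V}+r+1)$, one obtains $\int_{\gamma}e^{-t}t^{\mathcal{V}+r}\,dt=\Gamma(\mathcal{V}+r+1)$. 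Substituting this identity into the denominator of the series \eqref{eq:gh} then yields the claimed expression directly.

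The main obstacle is bookkeeping rather than depth: one must check that the idempotent factorisations of $e^{-t}$ and of the bicomplex power $t^{\mathcal{V}+r}$ are mutually compatible, so that the integrand splits exactly as $e^{-t}t^{\mathcal{V}+r}=e^{-t_1}t_1^{\nu_1+r}e_1+e^{-t_2}t_2^{\nu_2+r}e_2$ and the recombination of the two Euler integrals into one bicomplex integral is legitimate. Equally, one must confirm the translation of the convergence condition from $(\alpha,\beta)$ to $(\nu_1,\nu_2)$ recorded above, since it is precisely this that licenses treating each component's gamma integral as absolutely convergent and interchanging it with the summation.
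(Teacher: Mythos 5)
Your proposal is correct and follows essentially the same route as the paper's own proof: decompose via the idempotent representation \eqref{eq:fa}, replace each factor $\Gamma(\nu_m+r+1)$ by its Euler integral $\int_0^\infty e^{-t_m}t_m^{\nu_m+r}\,dt_m$, and recombine the two component integrals into the single bicomplex integral $\int_\gamma e^{-t}t^{\mathcal{V}+r}\,dt$ using term-by-term bicomplex integration and \eqref{eq:g}. In fact your explicit verification that $\Re(\alpha)+1>|\Im(\beta)|$ is equivalent to $\Re(\nu_1)+1>0$ and $\Re(\nu_2)+1>0$ is a detail the paper leaves implicit, so your write-up is, if anything, slightly more complete.
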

 \begin{proof}
 Using \eqref{eq:fa} and integral representation of Gamma function (\cite{sp_function}, chapter 2), we get
\begin{align*}
   E_{\mathcal{V},C}(Z)&= E_{\nu_1,c_1}(z_1)e_1+E_{\nu_2,c_2}(z_2)e_2\\&={z_1}^{\nu_1}\sum_{r=0}^{\infty}\frac{(c_1z_1)^r}{\Gamma{(\nu_1+r+1)}}e_1+{z_2}^{\nu_2}\sum_{r=0}^{\infty}\frac{(c_2z_2)^r}{\Gamma{(\nu_2+r+1)}}e_2\\&={z_1}^{\nu_1}\sum_{r=0}^{\infty}\frac{(c_1z_1)^r}{\int_0^{\infty}e^{-t_1}{t_1}^{\nu_1+r}dt_1}e_1+{z_2}^{\nu_2}\sum_{r=0}^{\infty}\frac{(c_2z_2)^re_2}{\int_0^{\infty}e^{-t_2}{t_2}^{\nu_2+r}dt_2}e_2\\&=(z_1^{\nu_1}e_1+z_2^{\nu_2}e_2)\left[\sum_{r=0}^{\infty}\frac{(c_1z_1)^re_1+(c_2z_2)^re_2}{\int_0^{\infty}e^{-t_1}{t_1}^{\nu_1+r}dt_1e_1+\int_0^{\infty}e^{-t_2}{t_2}^{\nu_2+r}dt_2e_2}\right]\\&=Z^\mathcal{V}
  \sum_{r=0}^{\infty}\frac{(CZ)^r}{\int_{\gamma}e^{-t}t^{\mathcal{V}+r}dt},
\end{align*}
 which completes the proof of the theorem.
 \end{proof}
 \begin{corollary}
     In Theorem \ref{th:w9}, setting $\mathcal{V}=\frac{1}{2},C=1, Z=w^2_1$, we obtain integral representation of error function $Erf$, which is given by
     \begin{align*}
         Erf w_1=\frac{\sqrt{\pi}}{2}e^{-w^2_{1}}\sum_{r=0}^{\infty}\frac{w^{2r+1}_1}{\int_{\gamma}e^{-t}t^{r+\frac{1}{2}}dt}.
     \end{align*}
 \end{corollary}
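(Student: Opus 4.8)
The plan is to obtain the claimed formula as a direct specialization of Theorem \ref{th:w9}, combined with the closed form for $E_{\frac{1}{2},1}(w_1^2)$ already recorded earlier (the Remark following Theorem~2, item (vi)). First I would substitute $\mathcal{V}=\frac{1}{2}$, $C=1$ and $Z=w_1^2$ into the integral representation
\begin{align*}
E_{\mathcal{V},C}(Z)=Z^{\mathcal{V}}\sum_{r=0}^{\infty}\frac{(CZ)^r}{\int_{\gamma}e^{-t}t^{\mathcal{V}+r}\,dt}.
\end{align*}
Here the bicomplex argument degenerates to the complex number $w_1^2$, so the contour $\gamma=(\gamma_1,\gamma_2)$ collapses to the real ray $(0,\infty)$ and the denominator $\int_{\gamma}e^{-t}t^{r+\frac12}\,dt$ reduces to the ordinary Euler integral for $\Gamma(r+\tfrac32)$. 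The condition $\Re(\alpha)+1>|\Im(\beta)|$ of the theorem is satisfied trivially since $\mathcal{V}=\frac12$ is real, so the representation applies verbatim.

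Next I would simplify the powers using the branch choice $(w_1^2)^{1/2}=w_1$, together with $(w_1^2)^r=w_1^{2r}$, so that the factor $Z^{\mathcal{V}}(CZ)^r$ collapses to $w_1^{2r+1}$. This gives
\begin{align*}
E_{\frac{1}{2},1}(w_1^2)=\sum_{r=0}^{\infty}\frac{w_1^{2r+1}}{\int_{\gamma}e^{-t}t^{r+\frac12}\,dt}.
\end{align*}
Finally I would invoke the closed-form evaluation established earlier, namely $E_{\frac{1}{2},1}(w_1^2)=\frac{2}{\sqrt{\pi}}\,e^{w_1^2}\,\mathrm{Erf}\,w_1$, equate it with the series just obtained, and isolate $\mathrm{Erf}\,w_1$ by multiplying both sides through by $\frac{\sqrt{\pi}}{2}e^{-w_1^2}$. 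This yields exactly
\begin{align*}
\mathrm{Erf}\,w_1=\frac{\sqrt{\pi}}{2}e^{-w_1^2}\sum_{r=0}^{\infty}\frac{w_1^{2r+1}}{\int_{\gamma}e^{-t}t^{r+\frac12}\,dt},
\end{align*}
which is the asserted identity.

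The argument is essentially a substitution followed by an algebraic rearrangement, so there is little to grind through. The only genuine checkpoints are the reduction of the bicomplex contour $\gamma$ to the real positive axis when $Z$ degenerates to the complex value $w_1^2$, and the consistency of the branch choice $(w_1^2)^{1/2}=w_1$ with the half-integer power appearing in the definition; both are harmless for $w_1$ in the relevant half-plane, and the former simply recovers the classical Euler integral for $\Gamma$. I do not anticipate any substantive obstacle beyond verifying that these specializations are legitimate.
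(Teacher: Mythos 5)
Your proposal is correct and follows exactly the route the paper intends: substituting $\mathcal{V}=\tfrac{1}{2}$, $C=1$, $Z=w_1^2$ into Theorem \ref{th:w9} and combining with the closed form $E_{\frac{1}{2},1}(w_1^2)=\frac{2}{\sqrt{\pi}}e^{w_1^2}\,\mathrm{Erf}\,w_1$ from item (vi) of the earlier Remark, then rearranging. Your added checks on the branch choice $(w_1^2)^{1/2}=w_1$, the collapse of $\gamma$ to the positive real axis (so the denominator is the Euler integral for $\Gamma(r+\tfrac{3}{2})$), and the hypothesis $\Re(\alpha)+1>|\Im(\beta)|$ are all sound and, if anything, more careful than the paper's unproved statement of the corollary.
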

 \begin{theo}
     Suppose that  $Z=z_1e_1+z_2e_2, \mathcal{V}=\nu_1e_1+\nu_2e_2=\alpha +j\beta, C=c_1e_1+c_2e_2, s=se_1+s_2e_2 \in{\mathbb{BC}}$ with $\Re(\alpha)+1>|\Im(\beta)|,\Re(c_1z_1)<0$ and $\Re(c_2z_2)<0$, then
     \begin{align}\label{eq:h1}
      \int_\Omega\frac{(-C)^sZ^{\mathcal{V}+s}\Gamma(-s)\Gamma(1+s)}{\Gamma(\mathcal{V}+1+s)}ds=2{\pi} i E_{\mathcal{V},C}(Z),
     \end{align}
     where $\Omega=(\Omega_1,\Omega_2)$ is a path in $\mathbb{BC}$ whose components $\Omega_1$ and $\Omega_2$ are Barnes path starts at $-i\infty$ and runs to $+i\infty$ in $\mathbb{C}(i)$.
 \end{theo}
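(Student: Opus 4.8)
The plan is to reduce the bicomplex identity \eqref{eq:h1} to its two complex idempotent components and then invoke the Barnes-type integral of Lemma~\ref{th:zd}. First I would split the bicomplex contour integral on the left-hand side using the component rule $\int_D f\,d\xi = \left[\int_{D_1}f\,d\xi_1\right]e_1 + \left[\int_{D_2}f\,d\xi_2\right]e_2$ for bicomplex integration. Writing the variable as $s=s_1e_1+s_2e_2$ and applying the term-by-term idempotent arithmetic, the integrand factorises as $(-C)^sZ^{\mathcal{V}+s} = (-c_1)^{s_1}z_1^{\nu_1+s_1}e_1 + (-c_2)^{s_2}z_2^{\nu_2+s_2}e_2$, while $\Gamma(-s)$, $\Gamma(1+s)$ and $\Gamma(\mathcal{V}+1+s)$ each decompose along $e_1,e_2$ by \eqref{eq:g}. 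Hence the left-hand side of \eqref{eq:h1} becomes
\[
\left[\int_{\Omega_1}\frac{(-c_1)^{s}z_1^{\nu_1+s}\Gamma(-s)\Gamma(1+s)}{\Gamma(\nu_1+1+s)}\,ds\right]e_1 + \left[\int_{\Omega_2}\frac{(-c_2)^{s}z_2^{\nu_2+s}\Gamma(-s)\Gamma(1+s)}{\Gamma(\nu_2+1+s)}\,ds\right]e_2,
\]
where on each $\Omega_k$ I write the complex integration variable again as $s$. It therefore suffices to prove, for $k=1,2$, the scalar identity $\int_{\Omega_k}(-c_k)^{s}z_k^{\nu_k+s}\Gamma(-s)\Gamma(1+s)/\Gamma(\nu_k+1+s)\,ds = 2\pi i\,E_{\nu_k,c_k}(z_k)$, after which \eqref{eq:fa} reassembles the two pieces into $2\pi i\,E_{\mathcal{V},C}(Z)$.

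For each component I would first recast the complex Miller--Ross function as a confluent hypergeometric function. Using $\Gamma(\nu_k+r+1) = \Gamma(\nu_k+1)(\nu_k+1)_r$ together with $(1)_r = r!$ in \eqref{eq:5} gives $E_{\nu_k,c_k}(z_k) = \frac{z_k^{\nu_k}}{\Gamma(\nu_k+1)}\,{}_1F_1(1;\nu_k+1;c_kz_k)$. Next I would pull $z_k^{\nu_k}$ out of the contour integral and combine the remaining powers via $(-c_k)^{s}z_k^{s} = (-c_kz_k)^{s}$, so the component integral equals $z_k^{\nu_k}\int_{\Omega_k}(-c_kz_k)^{s}\Gamma(-s)\Gamma(1+s)/\Gamma(\nu_k+1+s)\,ds$. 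This is precisely the Barnes integral of Lemma~\ref{th:zd} in the case $q=1$, $a_1=1$, $b_1=\nu_k+1$, with argument $c_kz_k$: the convergence hypothesis $\Re(c_kz_k)<0$ is exactly the standing assumption of that lemma, and the nondegeneracy of $b_1=\nu_k+1$ holds because $\Re(\alpha)+1>|\Im(\beta)|$ forces $\Re(\nu_k)>-1$ for $k=1,2$, so $\nu_k+1$ is never a nonpositive integer. Applying the lemma and using $\Gamma(1)=1$ yields $z_k^{\nu_k}\cdot\frac{2\pi i}{\Gamma(\nu_k+1)}\,{}_1F_1(1;\nu_k+1;c_kz_k) = 2\pi i\,E_{\nu_k,c_k}(z_k)$, which is the required component identity.

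The main obstacle is not the hypergeometric bookkeeping but justifying the idempotent splitting of the contour integral at the level of the multivalued factors $(-C)^{s}$ and $Z^{\mathcal{V}+s}$. One must verify that the branches chosen for these bicomplex powers are compatible with the componentwise branches $(-c_k)^{s}$ and $z_k^{\nu_k+s}$ appearing in the complex Barnes integrals, and that the path $\Omega=(\Omega_1,\Omega_2)$ genuinely decomposes so that each $\Omega_k$ is a Barnes contour in $\mathbb{C}(i)$ separating the poles of $\Gamma(-s)$ from those of $\Gamma(1+s)$. Once these branch conventions are fixed consistently and the convergence guaranteed by $\Re(c_kz_k)<0$ is recorded, the remaining manipulations are the routine ones indicated above.
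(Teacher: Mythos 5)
Your proposal is correct and takes essentially the same route as the paper's own proof: both reduce \eqref{eq:h1} to its two idempotent components via \eqref{eq:fa}, apply Lemma \ref{th:zd} with $q=1$, $a_1=1$, $b_1=\nu_k+1$ and argument $c_kz_k$ to each component (the paper multiplies the resulting identity by $z_k^{\nu_k}$, you equivalently pull $z_k^{\nu_k}$ out and write $(-c_k)^{s}z_k^{s}=(-c_kz_k)^{s}$), and then reassemble along $e_1,e_2$. The only differences are cosmetic -- you split the bicomplex integral first and then invoke the lemma, whereas the paper builds the bicomplex integral from the two complex Barnes integrals -- and your explicit checks (that $\Re(\alpha)+1>|\Im(\beta)|$ forces $\Re(\nu_k)>-1$, and the branch compatibility of the bicomplex powers) are points the paper leaves implicit.
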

 \begin{proof}
     Setting $q=1, a_1=1, b_1=\nu_1+1$ and $z=c_1z_1$ in Theorem\ref{th:zd}, we obtain
     \begin{align}\label{eq:1i}
         \int_{\Omega_1}\frac{(-c_1z_1)^{s_1}\Gamma(-s_1)\Gamma(1+s_1)}{\Gamma(\nu_1+1+s_1)}ds_1&=\frac{2\pi i}{\Gamma(\nu_1+1)}{}_{1} F_{1}\left[\begin{matrix}&1;\\&\nu_1+1;&\end{matrix}c_1z_1\right]\nonumber\\
         &=\frac{2\pi i}{\Gamma(\nu_1+1)}\sum_{r=0}^{\infty}\frac{(1)_r(c_1z_1)^r}{(\nu_1+1)_r r!}\nonumber\\
         &=2\pi i\sum_{r=0}^{\infty}\frac{(c_1z_1)^r}{\Gamma(\nu_1+r+1)},
     \end{align}
     where $\Re(c_1z_1)<0$ and $\Omega_1$ is a Barnes path in $\mathbb{C}(i)$ starts at $-i\infty$ and runs to $+i\infty$ in a way that the poles $s_1=r\;(r\in\mathbb{N}_0)$ of $\Gamma(-s_1)$ lie to the right of $\Omega_1$ and the poles $s_1=-l\;(l\in\mathbb{N})$ of $\Gamma(1+s_1)$ to the right of it.
     Multiplying both side by $z_1^{\nu_1}$ in \eqref{eq:1i}, we get
     \begin{align}\label{eq:vj}
        \int_{\Omega_1}\frac{(-c_1)^{s_1}z_1^{{\nu_1}+s_1}\Gamma(-s_1)\Gamma(1+s_1)}{\Gamma(\nu_1+1+s_1)}ds_1=2\pi i\; E_{\nu_1,c_1}(z_1),\;\Re(c_1z_1)<0.
     \end{align}
      Similarly, after putting the value of $q=1, a_1=1, b_1=\nu_2+1$ and $z=c_2z_2$ in Theorem\ref{th:zd}, we have
     \begin{align}\label{eq:xi}
         \int_{\Omega_2}\frac{(-c_2)^{s_2}z_2^{\nu_2+s_2}\Gamma(-s_2)\Gamma(1+s_2)}{\Gamma(\nu_2+1+s_2)}ds_2=2\pi i\; E_{\nu_2,c_2}(z_2),
     \end{align}
      where $\Re(c_2z_2)<0$ and $\Omega_2$ is a Barnes path in $\mathbb{C}(i)$ running from $-i\infty$ to $+i\infty$ in a way that the poles $s_2=r\;(r\in\mathbb{N}_0)$ of $\Gamma(-s_2)$ lie to the right of $\Omega_2$ and the poles $s_2=-l\;(l\in\mathbb{N})$ of $\Gamma(1+s_2)$ to the right of it. Using \eqref{eq:fa}, \eqref{eq:vj} and \eqref{eq:xi}, we get
      \begin{align*}
          &E_{\mathcal{V},C}(Z)\\
          &=E_{\nu_1,c_1}(z_1)e_1+E_{\nu_2,c_2}(z_2)e_2\\
          &=\frac{1}{2\pi i}\int_{\Omega_1}\frac{(-c_1)^{s_1}z_1^{{\nu_1}+s_1}\Gamma(-s_1)\Gamma(1+s_1)}{\Gamma(\nu_1+1+s_1)}ds_1e_1+\frac{1}{2\pi i}\int_{\Omega_2}\frac{(-c_2)^{s_2}z_2^{\nu_2+s_2}\Gamma(-s_2)\Gamma(1+s_2)}{\Gamma(\nu_2+1+s_2)}ds_2e_2\\
          &=\frac{1}{2\pi i}\int_{(\Omega_1,\Omega_2)}\left[\frac{(-c_1)^{s_1}z_1^{{\nu_1}+s_1}\Gamma(-s_1)\Gamma(1+s_1)e_1+(-c_2)^{s_2}z_2^{\nu_2+s_2}\Gamma(-s_2)\Gamma(1+s_2)e_2}{\Gamma(\nu_1+1+s_1)e_1+\Gamma(\nu_2+1+s_2)e_2}\right]d(s_1e_1+s_2e_2)\\
          &= \frac{1}{2\pi i}\int_\Omega\frac{(-C)^sZ^{\mathcal{V}+s}\Gamma(-s)\Gamma(1+s)}{\Gamma(\mathcal{V}+1+s)}ds,\;\Re(c_1z_1)<0,\;\Re(c_2z_2)<0.
     \end{align*}
      Hence, the proof is completed.
 \end{proof}
 % \begin{theo}

 %     % \begin{align*}
 %     %     Z^3\mathbb{E}_{\mathcal{V},C}(Z)=CZ^3\mathbb{E}_{\mathcal{V}+1,C}(Z)+(\mathcal{V}+1)(\mathcal{V}+2)(\mathcal{V}+3)\left[\mathbb{E}_{\mathcal{V}+3,C}(Z)-C\mathbb{E}_{\mathcal{V}+4,C}(Z)\right].
 %     % \end{align*}
 % \end{theo}
 \section{Differential relations and differential equation of the bicomplex Miller Ross function}
 Differential relations of special functions elucidate fundamental properties and behaviors crucial in diverse scientific disciplines. These relations facilitate the understanding of dynamic systems, ranging from quantum mechanics to fluid dynamics, by providing insights into the interplay between different variables. Through differential relations, special functions serve as indispensable tools in modeling and analyzing complex phenomena, enriching our understanding of the natural world.
 In this section, we derive differential relations and establish differential equation which are satisfied by bicomplex Miller-Ross function.
 \begin{theo}\label{th:ui}
    Let  $Z=z_1e_1+z_2e_2, \mathcal{V}=\nu_1e_1+\nu_2e_2=\alpha +j\beta, C=c_1e_1+c_2e_2 \in{\mathbb{BC}}$ and $k\in\mathbb{N}$, then
   \begin{align*}
    \frac{d^k}{dZ^k}(E_{\mathcal{V},C}(Z))=\sum\limits_{p=1}^{k}\frac{C^{k-p}Z^{\mathcal{V}-p}}{\Gamma(\mathcal{V}-p+1)}+C^kE_{\mathcal{V},C}(Z).
    \end{align*}
\end{theo}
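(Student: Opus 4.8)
The plan is to exploit the idempotent representation \eqref{eq:fa}, which reduces the bicomplex derivative to two independent complex derivatives, and then to prove the corresponding scalar formula for the classical Miller-Ross function by induction on $k$. First I would record that, since a bicomplex holomorphic function differentiates termwise along each idempotent channel, one has $\frac{d^k}{dZ^k} E_{\mathcal{V},C}(Z) = \left(\frac{d^k}{dz_1^k} E_{\nu_1,c_1}(z_1)\right) e_1 + \left(\frac{d^k}{dz_2^k} E_{\nu_2,c_2}(z_2)\right) e_2$. Hence it suffices to establish, for $\nu, c, z \in \mathbb{C}$, the scalar identity $\frac{d^k}{dz^k} E_{\nu,c}(z) = \sum_{p=1}^k \frac{c^{k-p} z^{\nu-p}}{\Gamma(\nu-p+1)} + c^k E_{\nu,c}(z)$, and then reassemble the two channels.

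For the complex statement I would differentiate the series \eqref{eq:5} termwise. The base case $k=1$ comes from $\frac{d}{dz}\frac{c^n z^{\nu+n}}{\Gamma(\nu+n+1)} = \frac{c^n z^{\nu+n-1}}{\Gamma(\nu+n)}$; splitting off the $n=0$ term gives $\frac{z^{\nu-1}}{\Gamma(\nu)}$, and reindexing the remainder by $m = n-1$ recovers $c\, E_{\nu,c}(z)$, which is exactly the asserted formula for $k=1$. For the inductive step the crucial ingredient is the functional equation $\Gamma(\nu-p+1) = (\nu-p)\Gamma(\nu-p)$: differentiating the inductive hypothesis turns each summand $\frac{c^{k-p} z^{\nu-p}}{\Gamma(\nu-p+1)}$ into $\frac{c^{k-p} z^{\nu-p-1}}{\Gamma(\nu-p)}$, the shift $q = p+1$ converts the finite sum into $\sum_{q=2}^{k+1}\frac{c^{(k+1)-q} z^{\nu-q}}{\Gamma(\nu-q+1)}$, while the remaining term $c^k \frac{d}{dz} E_{\nu,c}(z) = \frac{c^k z^{\nu-1}}{\Gamma(\nu)} + c^{k+1} E_{\nu,c}(z)$ supplies precisely the missing $q=1$ summand together with the $c^{k+1} E_{\nu,c}(z)$ tail. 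Collecting these yields the $(k+1)$-st formula, closing the induction.

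Finally I would lift back to $\mathbb{BC}$: applying the scalar identity in each channel and recombining with $\frac{C^{k-p} Z^{\mathcal{V}-p}}{\Gamma(\mathcal{V}-p+1)} = \frac{c_1^{k-p} z_1^{\nu_1-p}}{\Gamma(\nu_1-p+1)} e_1 + \frac{c_2^{k-p} z_2^{\nu_2-p}}{\Gamma(\nu_2-p+1)} e_2$ (using the termwise idempotent arithmetic together with \eqref{eq:g}) and $C^k E_{\mathcal{V},C}(Z) = c_1^k E_{\nu_1,c_1}(z_1) e_1 + c_2^k E_{\nu_2,c_2}(z_2) e_2$ gives the claim. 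The main obstacle I anticipate is not the algebra but the justification that the bicomplex derivative genuinely acts componentwise on the idempotent decomposition, that is, that termwise differentiation of the power series $Z^{-\mathcal{V}} E_{\mathcal{V},C}(Z)$ is legitimate; this rests on the absolute hyperbolic convergence established in the first theorem and on the fact that each idempotent component is an honest holomorphic function of a single complex variable, so that the standard complex term-by-term differentiation theorem applies to $E_{\nu_1,c_1}$ and $E_{\nu_2,c_2}$ separately.
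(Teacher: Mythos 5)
Your proposal is correct and follows essentially the same route as the paper: induction on $k$, term-by-term differentiation of the series, splitting off the leading term, the functional equation $\Gamma(\nu-p+1)=(\nu-p)\Gamma(\nu-p)$, and reindexing, all carried out through the idempotent decomposition. The only cosmetic difference is that you prove the scalar (complex) identity first and then lift it to $\mathbb{BC}$, whereas the paper runs the induction directly on the bicomplex function while performing the identical channel-wise computations inside each step.
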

\begin{proof}
    We will prove the theorem using principle of mathematical induction. By using derivative formula for bicomplex function, we get
\begin{align*}
    \frac{d}{dZ} \left( E_{\mathcal{V},C}(Z)\right)
     &=\frac{d}{dz_1}\left(z_1^{\nu_1}
  \sum_{r=0}^{\infty}\frac{(c_1z_1)^r}{\Gamma{(\nu_1+r+1)}}\right)e_1+\frac{d}{dz_2}\left(z_2^{\nu_2}
  \sum_{r=0}^{\infty}\frac{(c_2z_2)^r}{\Gamma{(\nu_2+r+1)}}e_2\right) e_2\nonumber\\
     &=\left(
  \sum_{r=0}^{\infty}\frac{(r+\nu_1)c_1^rz_1^{r+\nu_1-1}}{\Gamma{(\nu_1+r+1)}}\right)e_1+\left(\sum_{r=0}^{\infty}\frac{(r+\nu_2)c_2^rz_2^{r+\nu_2-1}}{\Gamma{(\nu_2+r+1)}}\right) e_2\nonumber\\
     &=\left(\frac{z_1^{\nu_1-1}}{\Gamma(\nu_1)}+ c_1z_1^{\nu_1}\sum_{r=0}^{\infty}\frac{(c_1z_1)^r}{\Gamma{(\nu_1+r+1)}}\right)e_1+\left(\frac{z_2^{\nu_2-1}}{\Gamma(\nu_2)}+ c_2z_2^{\nu_2}\sum_{r=0}^{\infty}\frac{(c_2z_2)^r}{\Gamma{(\nu_2+r+1)}}\right) e_2\\
    &=\left(\frac{z_1^{\nu_1-1}}{\Gamma(\nu_1)}+ c_1E_{\nu_1,c_1}(z_1)\right)e_1+\left(\frac{z_2^{\nu_2-1}}{\Gamma(\nu_2)}+ c_2E_{\nu_2,c_2}(z_2)\right) e_2\\
    &=\left(\frac{z_1^{\nu_1-1}}{\Gamma(\nu_1)}e_1+\frac{z_2^{\nu_2-1}}{\Gamma(\nu_2)}e_2\right)+(c_1e_1+c_2e_2)\left(E_{\nu_1,c_1}(z_1)e_1+E_{\nu_2,c_2}(z_2)e_2\right)\\
    &=\frac{Z^{\mathcal{V}-1}}{\Gamma(\mathcal{V})}+CE_{\mathcal{V},C}(Z).
    \end{align*}
Therefore, the given statement is true for $k=1$. Assume that given statement is true for $k=m$ and it implies that
\begin{align*}
   \frac{d^m}{dZ^m}(E_{\mathcal{V},C}(Z))=\sum\limits_{p=1}^{m}\frac{C^{m-p}Z^{\mathcal{V}-p}}{\Gamma(\mathcal{V}-p+1)}+C^mE_{\mathcal{V},C}(Z).
\end{align*}
Now,
\begin{align*}
     &\frac{d^{m+1}}{dZ^{m+1}} \left( E_{\mathcal{V},C}(Z)\right)
     =\frac{d}{dZ}\left[ \frac{d^m}{dZ^m} \left(E_{\mathcal{V},C}(Z)\right) \right]\\
     &=\frac{d}{dz_1}\left(\sum\limits_{p=1}^{m}\frac{c_1^{m-p}z^{\nu_1-p}}{\Gamma(\nu_1-p+1)}+c_1^m E_{\nu_1,c_1}(z_1)\right)e_1+\frac{d}{dz_2}\left(\sum\limits_{p=1}^{m}\frac{c_2^{m-p}z^{\nu_2-p}}{\Gamma(\nu_2-p+1)}+c_2^m E_{\nu_2,c_2}(z_2)\right)e_2\\
     &=\left(\sum\limits_{p=1}^{m}\frac{c_1^{m-p}z^{\nu_1-p-1}}{\Gamma(\nu_1-p)}+\frac{c_1^mz_1^{\nu_1-p}}{\Gamma(\nu_1)}+c_1^{m+1}E_{\nu_1,c_1}(z_1)\right)e_1+\left(\sum\limits_{p=1}^{m}\frac{c_2^{m-p}z^{\nu_2-p-1}}{\Gamma(\nu_2-p)}+\frac{c_2^mz_2^{\nu_2-p}}{\Gamma(\nu_2)}+c_2^{m+1}E_{\nu_2,c_2}(z_2)\right)e_2\\
     &=\left(\sum\limits_{p=1}^{m+1}\frac{c_1^{m+1-p}z^{\nu_1-p}}{\Gamma(\nu_1-p+1)}+c_1^{m+1}E_{\nu_1,c_1}(z_1)\right)e_1+\left(\sum\limits_{p=1}^{m+1}\frac{c_2^{m+1-p}z^{\nu_2-p}}{\Gamma(\nu_2-p+1)}+c_2^{m+1}E_{\nu_2,c_2}(z_2)\right)e_2\\
     &=\sum\limits_{p=1}^{m+1}\frac{C^{m+1-p}Z^{\mathcal{V}-p}}{\Gamma(\mathcal{V}-p+1)}+C^{m+1}E_{\mathcal{V},C}(Z).
\end{align*}
Thus, given statement is true for $k=1$ and for $k=m+1$ is true, whenever the statement is true for $k=m$. Hence, by principle of mathematical induction, the statement is true for all $k\in{\mathbb{N}}$. Hence the theorem is proved.
\end{proof}
\begin{theo}\label{th:lk}
     Assume that $Z=z_1e_1+z_2e_2, \mathcal{V}=\nu_1e_1+\nu_2e_2=\alpha +j\beta, C=c_1e_1+c_2e_2, M=m_1e_1+m_2e_2 \in{\mathbb{BC}}$ where $m_1,m_2\in\mathbb{Z^+}$, then
     \begin{align*}
         \frac{d^k}{dZ^k}(E_{\mathcal{V}+M,C}(Z))=E_{\mathcal{V}+M-k,C}(Z).
     \end{align*}
\end{theo}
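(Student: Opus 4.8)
The plan is to reduce the statement to a single first-order index-lowering relation and then iterate it by induction on $k$. Concretely, I would first prove that for \emph{every} bicomplex index $\mathcal{W}\in\mathbb{BC}$ one has $\frac{d}{dZ}E_{\mathcal{W},C}(Z)=E_{\mathcal{W}-1,C}(Z)$, and then apply this relation $k$ times starting from $\mathcal{W}=\mathcal{V}+M$. Throughout I would work in the idempotent representation \eqref{eq:fa}, since bicomplex differentiation acts componentwise there: if $f=f_1e_1+f_2e_2$ then $\frac{d}{dZ}f=\frac{df_1}{dz_1}e_1+\frac{df_2}{dz_2}e_2$, so every identity reduces to the corresponding statement for the complex Miller--Ross functions $E_{\nu_1+m_1,c_1}(z_1)$ and $E_{\nu_2+m_2,c_2}(z_2)$.

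For the first-order relation I would combine two facts. The $k=1$ case of Theorem~\ref{th:ui} already gives $\frac{d}{dZ}E_{\mathcal{W},C}(Z)=\frac{Z^{\mathcal{W}-1}}{\Gamma(\mathcal{W})}+C\,E_{\mathcal{W},C}(Z)$. On the other hand, the series \eqref{eq:gh} for $E_{\mathcal{W}-1,C}(Z)$ can be split by isolating its $r=0$ term and reindexing the remaining sum $r\mapsto r+1$; carrying this out componentwise and using $e_1+e_2=1$ yields the recurrence $E_{\mathcal{W}-1,C}(Z)=\frac{Z^{\mathcal{W}-1}}{\Gamma(\mathcal{W})}+C\,E_{\mathcal{W},C}(Z)$. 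Comparing the two expressions gives $\frac{d}{dZ}E_{\mathcal{W},C}(Z)=E_{\mathcal{W}-1,C}(Z)$, which is exactly the base case with $\mathcal{W}=\mathcal{V}+M$.

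The inductive step is then immediate: assuming $\frac{d^k}{dZ^k}E_{\mathcal{V}+M,C}(Z)=E_{\mathcal{V}+M-k,C}(Z)$, one applies the first-order relation with $\mathcal{W}=\mathcal{V}+M-k$ to obtain $\frac{d^{k+1}}{dZ^{k+1}}E_{\mathcal{V}+M,C}(Z)=\frac{d}{dZ}E_{\mathcal{V}+M-k,C}(Z)=E_{\mathcal{V}+M-k-1,C}(Z)$, completing the induction and hence the proof.

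The only point requiring care---and the place I expect to be the main obstacle---is the bookkeeping in the reindexing step: one must ensure the split-and-shift is valid even when an idempotent component $\nu_i+m_i$ (or a lowered value $\nu_i+m_i-k$) is a nonpositive integer, where the reciprocal gamma factor $1/\Gamma(\cdot)$ vanishes and silently absorbs a term. The hypothesis $m_1,m_2\in\mathbb{Z}^{+}$ keeps the shifted indices $\nu_i+m_i-k$ within the intended range, so that term-by-term differentiation of the absolutely hyperbolic convergent series (established in Section~2) is justified and the lowering relation holds in each idempotent slot. Once this is verified in the complex components, recombining via $E_{\mathcal{V}+M-k,C}(Z)=E_{\nu_1+m_1-k,c_1}(z_1)e_1+E_{\nu_2+m_2-k,c_2}(z_2)e_2$ finishes the argument.
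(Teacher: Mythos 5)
Your proof is correct, and it takes a route that genuinely differs from the paper's, though both are inductions on $k$. The paper carries out \emph{both} the base case and the inductive step by differentiating the series \eqref{eq:gh} term by term in the idempotent components and using $(\mathcal{W}+r)\big/\Gamma(\mathcal{W}+r+1)=1\big/\Gamma(\mathcal{W}+r)$ — essentially the same computation written out twice. You instead isolate the general lowering identity $\frac{d}{dZ}E_{\mathcal{W},C}(Z)=E_{\mathcal{W}-1,C}(Z)$ and obtain it by comparing the $k=1$ case of Theorem \ref{th:ui} with the recurrence $E_{\mathcal{W}-1,C}(Z)=\frac{Z^{\mathcal{W}-1}}{\Gamma(\mathcal{W})}+C\,E_{\mathcal{W},C}(Z)$, which you prove by splitting off the $r=0$ term and reindexing. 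This is legitimate (Theorem \ref{th:ui} precedes Theorem \ref{th:lk} and its proof does not use it, so there is no circularity), and it in fact reverses the paper's logic: the paper derives that recurrence only afterwards, as a remark, by combining Theorems \ref{th:ui} and \ref{th:lk} with $M=1$, $k=1$. What your route buys: no duplicated series computation, a one-line inductive step, and an explicit lowering relation valid for \emph{every} index $\mathcal{W}$, which makes clear that the hypothesis $m_1,m_2\in\mathbb{Z}^+$ is not really needed (the paper itself later applies the theorem with $M=0$ in a remark). What the paper's route buys: a proof that is self-contained in the series definition. One small correction to your final paragraph: the hypothesis on $m_1,m_2$ is not what rescues the split-and-shift when a component index passes through a nonpositive integer; what rescues it is the convention $1/\Gamma(\text{nonpositive integer})=0$, under which the isolated $r=0$ term vanishes exactly when it should, so both the recurrence and the lowering relation hold for all indices without any restriction — your argument goes through verbatim.
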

\begin{proof}
    Using power series representation \eqref{eq:gh} of bicomplex Miller-Ross function, we obtain
    \begin{align*}
        & \frac{d}{dZ} \left(E_{\mathcal{V}+M,C}(Z)\right)\\
         &=\frac{d}{dZ}\left(Z^{\mathcal{V}+M}
  \sum_{r=0}^{\infty}\frac{(CZ)^r}{\Gamma{(\mathcal{V}+M+r+1)}}\right)\\
     &=\frac{d}{dz_1}\left(z_1^{\nu_1+m_1}
  \sum_{r=0}^{\infty}\frac{(c_1z_1)^r}{\Gamma{(\nu_1+m_1+r+1)}}\right)e_1+\frac{d}{dz_2}\left(z_2^{\nu_2+m_2}
  \sum_{r=0}^{\infty}\frac{(c_2z_2)^r}{\Gamma{(\nu_2+m_2+r+1)}}e_2\right) e_2\\
     &=\left(
  \sum_{r=0}^{\infty}\frac{(\nu_1+m_1+r)c_1^rz_1^{\nu_1+m_1+r-1}}{\Gamma{(\nu_1+m_1+r+1)}}\right)e_1+\left(\sum_{r=0}^{\infty}\frac{(\nu_2+m_2+r)c_2^rz_2^{\nu_2+m_2+r-1}}{\Gamma{(\nu_2+m_2+r+1)}}\right) e_2\\
  % &=\left(
  % \sum_{n=0}^{\infty}\frac{c_1^nz_1^{\nu_1+m_1+n-1}}{\Gamma{(\nu_1+m_1+n)}}\right)e_1+\left(\sum_{n=0}^{\infty}\frac{c_2^nz_2^{\nu_2+m_2+n-1}}{\Gamma{(\nu_2+m_2+n)}}\right) e_2\\
  &=\left(z_1^{\nu_1+m_1-1}
  \sum_{r=0}^{\infty}\frac{(c_1z_1)^r}{\Gamma{(\nu_1+m_1+r)}}\right)e_1+\left(z_2^{\nu_2+m_2-1}\sum_{r=0}^{\infty}\frac{(c_2z_2)^r}{\Gamma{(\nu_2+m_2+r)}}e_2\right) e_2\\
  &=\left(Z^{\mathcal{V}+M-1}
  \sum_{r=0}^{\infty}\frac{(CZ)^r}{\Gamma{(\mathcal{V}+M+r)}}\right)\\
  &=E_{\mathcal{V}+M-1}(Z).
    \end{align*}
    Therefore, the given statement is true for $k=1$. Assume that given statement is true for $k=m$.
    Now,
    \begin{align*}
        &\frac{d^{m+1}}{dZ^{m+1}}\left(E_{\mathcal{V}+M,C}(Z)\right)\\
        &=\frac{d}{dZ}(E_{\mathcal{V}+M-m,C}(Z))\\
         &=\frac{d}{dZ}\left(Z^{\mathcal{V}+M-m}
  \sum_{r=0}^{\infty}\frac{(CZ)^r}{\Gamma{(\mathcal{V}+M-m+r+1)}}\right)\\
     &=\frac{d}{dz_1}\left(
  \sum_{r=0}^{\infty}\frac{c_1^rz_1^{\nu_1+m_1-m+r}}{\Gamma{(\nu_1+m_1-m+r+1)}}\right)e_1+\frac{d}{dz_2}\left(
  \sum_{r=0}^{\infty}\frac{c_2^rz_2^{\nu_2+m_2-m+r}}{\Gamma{(\nu_2+m_2-m+r+1)}}e_2\right) e_2\\
     &=\left(
  \sum_{r=0}^{\infty}\frac{(\nu_1+m_1-m+r)c_1^rz_1^{\nu_1+m_1-m+r-1}}{\Gamma{(\nu_1+m_1-m+r+1)}}\right)e_1+\left(\sum_{r=0}^{\infty}\frac{(\nu_2+m_2-m+r)c_2^rz_2^{\nu_2+m_2-m+r-1}}{\Gamma{(\nu_2+m_2-m+r+1)}}\right) e_2\\
  % &=\left(
  % \sum_{n=0}^{\infty}\frac{c_1^nz_1^{\nu_1+m_1+n-1}}{\Gamma{(\nu_1+m_1+n)}}\right)e_1+\left(\sum_{n=0}^{\infty}\frac{c_2^nz_2^{\nu_2+m_2+n-1}}{\Gamma{(\nu_2+m_2+n)}}\right) e_2\\
  &=\left(z_1^{\nu_1+m_1-m-1}
  \sum_{n=0}^{\infty}\frac{(c_1z_1)^r}{\Gamma{(\nu_1+m_1-m+r)}}\right)e_1+\left(z_2^{\nu_2+m_2-m-1}\sum_{r=0}^{\infty}\frac{(c_2z_2)^r}{\Gamma{(\nu_2+m_2-m+r)}}e_2\right) e_2\\
  &=\left(Z^{\mathcal{V}+M-m-1}
  \sum_{r=0}^{\infty}\frac{(CZ)^r}{\Gamma{(\mathcal{V}+M-m+r)}}\right)\\
  &=E_{\mathcal{V}+M-m-1,C}(Z),
    \end{align*}
    which shows that the given statement is true for $k=m+1$. Hence, by principle of mathematical induction, the given statement is true for all $k\in\mathbb{N}$.
\end{proof}
\begin{remark}
    Setting $M=0$ in Theorem \ref{th:lk} and using Theorem \ref{th:ui}, we obtain
    \begin{align*}
        E_{\mathcal{V}-k,C}(Z)=\sum\limits_{p=1}^{k}\frac{C^{k-p}Z^{\mathcal{V}-p}}{\Gamma(\mathcal{V}-p+1)}+C^k E_{\mathcal{V},C}(Z),
    \end{align*}
    where $Z=z_1e_1+z_2e_2, \mathcal{V}=\nu_1e_1+\nu_2e_2=\alpha +j\beta, C=c_1e_1+c_2e_2, \in{\mathbb{BC}}$ and $k\in\mathbb{Z^+}$.
\end{remark}
\begin{remark}
    Setting $M=1$ and $k=1$ in Theorem \ref{th:lk} and using Theorem \ref{th:ui}, we get
    \begin{align*}
            E_{\mathcal{V},C}(Z)=\frac{Z^\mathcal{V}}{\Gamma(\mathcal{V}+1)}+C E_{\mathcal{V}+1,C}(Z),
       \end{align*}
       where $Z=z_1e_1+z_2e_2, \mathcal{V}=\nu_1e_1+\nu_2e_2=\alpha +j\beta, C=c_1e_1+c_2e_2, \in{\mathbb{BC}}$.
\end{remark}
\begin{theo}
     Suppose that  $Z=z_1e_1+z_2e_2, \mathcal{V}=\nu_1e_1+\nu_2e_2=\alpha +j\beta, C=c_1e_1+c_2e_2 \in{\mathbb{BC}}$ with $\Re(\alpha)+1>|\Im(\beta)|$ and $Z\notin \mathbb{O}_2$. Bicomplex Miller Ross function $ E_{\mathcal{V},C}(Z)$ satisfies the differential equation $$Z\frac{d^2U}{dZ^2}+(1-\mathcal{V}-CZ)\frac{dU}{dZ}+(\mathcal{V}-1)CU=0.$$
 \end{theo}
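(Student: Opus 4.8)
The plan is to reduce the bicomplex differential equation to its two complex idempotent components and then verify each one by a direct manipulation of the defining power series. Writing $U = E_{\mathcal{V},C}(Z)$, the idempotent representation \eqref{eq:fa} gives $U = E_{\nu_1,c_1}(z_1)e_1 + E_{\nu_2,c_2}(z_2)e_2$. Since bicomplex differentiation acts slotwise, so that $\frac{dU}{dZ} = \frac{d}{dz_1}E_{\nu_1,c_1}(z_1)\,e_1 + \frac{d}{dz_2}E_{\nu_2,c_2}(z_2)\,e_2$ (exactly as already used in the proofs of Theorems \ref{th:ui} and \ref{th:lk}), and since $Z$, $\mathcal{V}$, $C$ all split through $e_1,e_2$, the bicomplex operator $Z\frac{d^2}{dZ^2} + (1-\mathcal{V}-CZ)\frac{d}{dZ} + (\mathcal{V}-1)C$ applied to $U$ decomposes, via $e_1^2=e_1$, $e_2^2=e_2$, $e_1e_2=0$, into the two complex operators acting on the respective components. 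Hence it suffices to prove, for each $\ell \in \{1,2\}$, that the complex Miller-Ross function $u := E_{\nu_\ell,c_\ell}(z_\ell)$ solves $z u'' + (1-\nu_\ell - c_\ell z)u' + (\nu_\ell-1)c_\ell u = 0$; the hypothesis $Z \notin \mathbb{O}_2$ guarantees $z_1,z_2 \neq 0$, so that the negative powers appearing below are meaningful and the leading coefficient does not degenerate.

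To verify the complex equation I would drop the subscript $\ell$ and write $u = \sum_{r=0}^{\infty}\frac{c^r z^{\nu+r}}{\Gamma(\nu+r+1)}$. Because this series converges absolutely and uniformly on compact subsets (the hyperbolic ratio test argument of the opening theorem of Section 2 specializes to each component), term-by-term differentiation is legitimate and produces $u' = \sum_{r=0}^{\infty}\frac{c^r z^{\nu+r-1}}{\Gamma(\nu+r)}$ and $u'' = \sum_{r=0}^{\infty}\frac{c^r z^{\nu+r-2}}{\Gamma(\nu+r-1)}$, where the recurrence $\Gamma(\nu+r+1) = (\nu+r)\Gamma(\nu+r)$ absorbs the factor descending from the exponent at each stage.

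The computation then proceeds by grouping the four terms of the operator into two pairs. Using $\Gamma(\nu+r-1) = \Gamma(\nu+r)/(\nu+r-1)$, the pair $z u'' + (1-\nu)u'$ collapses termwise to $\sum_{r=0}^{\infty}\frac{(\nu+r-1)+(1-\nu)}{\Gamma(\nu+r)}c^r z^{\nu+r-1} = \sum_{r\geq 1}\frac{r\,c^r z^{\nu+r-1}}{\Gamma(\nu+r)}$, the $r=0$ term vanishing. For the remaining pair $-czu' + (\nu-1)cu$, re-indexing each sum so that the common power $z^{\nu+r-1}$ appears and again applying the Gamma recurrence yields $\sum_{r\geq 1}\frac{-(\nu+r-1)+(\nu-1)}{\Gamma(\nu+r)}c^r z^{\nu+r-1} = -\sum_{r\geq 1}\frac{r\,c^r z^{\nu+r-1}}{\Gamma(\nu+r)}$. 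The two resulting sums are identical up to sign, so they cancel and $z u'' + (1-\nu-cz)u' + (\nu-1)cu = 0$. Reassembling the components through $e_1,e_2$ then recovers the bicomplex equation for $U$.

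I expect no serious obstacle: the argument is entirely a matter of careful index bookkeeping together with repeated use of the Gamma functional equation. The only points deserving explicit justification are the legitimacy of term-by-term differentiation (supplied by the absolute and uniform convergence established earlier) and the role of $Z \notin \mathbb{O}_2$, which keeps each idempotent slot non-degenerate since the equation has leading coefficient $Z$ and involves reciprocal powers. An alternative route would invoke the identity $E_{\nu,c}(z) = \frac{z^{\nu}}{\Gamma(\nu+1)}\,{}_1F_1(1;\nu+1;cz)$ and the hypergeometric differential equation \eqref{eq:in}, but the argument $cz$ introduces chain-rule factors that make this less transparent than the direct series computation.
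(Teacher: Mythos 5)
Your proposal is correct, but it takes a genuinely different route from the paper. The paper never touches the power series directly in the proof: it first rewrites the function as
$E_{\mathcal{V},C}(Z)=\frac{Z^{\mathcal{V}}}{\Gamma(\mathcal{V}+1)}\,{}_{1}F_{1}\!\left[\begin{matrix}&1;\\ &\mathcal{V}+1;&\end{matrix}CZ\right]$,
invokes the lemma on the bicomplex generalized hypergeometric differential equation \eqref{eq:in} with $p=q=1$, substitutes $a_1=1$, $b_1=\mathcal{V}+1$, $Y=CZ$ to obtain
$\left[Z\frac{d^2}{dZ^2}+(\mathcal{V}+1-CZ)\frac{d}{dZ}-C\right]W=0$,
and then performs the change of dependent variable $W=Z^{-\mathcal{V}}\Gamma(\mathcal{V}+1)U$ to transform this confluent hypergeometric equation into the stated equation for $U$. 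Incidentally, the chain-rule complication you cite as a reason to avoid this route is harmless: since $\frac{d}{dZ}=C\frac{d}{dY}$, the substitution $Y=CZ$ merely produces an overall factor of $C$ that cancels, which is exactly how the paper passes from \eqref{eq:nk} to \eqref{eq:kd}. What the two approaches buy is complementary: the paper's argument is shorter on computation, reuses existing machinery, and makes the structural link between the bicomplex Miller--Ross function and the bicomplex confluent hypergeometric function explicit (which the paper exploits again conceptually elsewhere); your argument is more elementary and self-contained, needing only the Gamma functional equation and termwise differentiation of a locally uniformly convergent series, and it verifies the equation for each idempotent component directly rather than deriving it by transformation. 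Your handling of the hypotheses is also sound: $Z\notin\mathbb{O}_2$ is equivalent to $z_1z_2\neq 0$ (since $w_1^2+w_2^2=z_1z_2$), which is precisely what keeps the negative powers and the leading coefficient nondegenerate in each slot.
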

 \begin{proof}
  Now, every bicomplex Miller-Ross function have another representation through bicomplex confluent hypergeometric function as
  \begin{align*}
     E_{\mathcal{V},C}(Z)
  &={z_1}^{\nu_1}\sum_{r=0}^{\infty}\frac{(c_1z_1)^r}{\Gamma{(\nu_1+r+1)}}e_1+{z_2}^{\nu_2}\sum_{r=0}^{\infty}\frac{(c_2z_2)^r}{\Gamma{(\nu_2+r+1)}}e_2\\
  &={z_1}^{\nu_1}\sum_{r=0}^{\infty}\frac{(c_1z_1)^r}{(\nu_1+1)_r\Gamma{(\nu_1+1)}}e_1+{z_2}^{\nu_2}\sum_{r=0}^{\infty}\frac{(c_2z_2)^r}{(\nu_2+1)_r\Gamma{(\nu_2+1)}}e_2\\
  &=\frac{z_1^{\nu_1}}{\Gamma(\nu_1+1)}\sum_{r=0}^{\infty}\frac{(1)_r(c_1z_1)^r}{(\nu_1+1)_rr!}e_1+\frac{z_2^{\nu_2}}{\Gamma(\nu_2+1)}\sum_{r=0}^{\infty}\frac{(1)_r(c_2z_2)^r}{(\nu_2+1)_rr!}e_2\\
 &=\frac{z_1^{\nu_1}}{\Gamma(\nu_1+1)}\times{}_{1} F_{1}\left[\begin{matrix} &1 ;\\& \nu_1+1;&\end{matrix}c_1z_1\right]e_1+\frac{z_2^{\nu_2}}{\Gamma(\nu_2+1)}\times{}_{1} F_{1}\left[\begin{matrix} &1 ;\\& \nu_2+1;&\end{matrix}c_2z_2\right]e_2\\
   &=\frac{Z^\mathcal{V}}{\Gamma(\mathcal{V}+1)}\times{}_{1} F_{1}\left[\begin{matrix} &1 ;\\& \mathcal{V}+1;&\end{matrix}CZ\right].
  \end{align*}
   By substituting $p=1$ and $q=1$ in the equation \eqref{eq:in}, we obtain bicomplex confluent hypergeometric function $W={}_{1} F_{1}\left[\begin{matrix} &a_1 ;\\& b_1;&\end{matrix}Y\right]$ satisfying the differential equation
   \begin{align}\label{eq:nk}
       \left[Y\frac{d^2}{dY^2}+(b_1-Y)\frac{d}{dY}-a_1\right]W=0.
   \end{align}
 Now putting, $a_1=1$, $b_1=\mathcal{V}+1$ and $Y=CZ$ in the equation \eqref{eq:nk}, we get following differential equation
 \begin{align}\label{eq:kd}
     \left[Z\frac{d^2}{dZ^2}+(\mathcal{V}+1-CZ)\frac{d}{dZ}-C\right]W=0,
 \end{align}
 whose one solution is $W={}_{1} F_{1}\left[\begin{matrix} &1 ;\\& \mathcal{V}+1;&\end{matrix}CZ\right]$. Since  $E_{\mathcal{V},C}(Z)=\frac{Z^\mathcal{V}}{\Gamma(\mathcal{V}+1)}W$, then if we put $W=Z^\mathcal{-V}\Gamma(\mathcal{V}+1)U$ in \eqref{eq:kd}, we have
 \begin{align*}
     &Z\left(Z^{-\mathcal{V}}\frac{d^2U}{dZ^2}-2\mathcal{V}Z^{-(\mathcal{V}+1)}\frac{dU}{dZ}+\mathcal{V}(\mathcal{V}+1)Z^{\mathcal{-V}-2}U\right)\\&\hspace{90pt}+(\mathcal{V}+1-CZ)\left(Z^\mathcal{-V}\frac{dU}{dZ}-\mathcal{V}Z^{-(\mathcal{V}+1)}U\right)-CZ^\mathcal{-V}U=0
 \end{align*}
 which implies that
 \begin{align*}
     Z\frac{d^2U}{dZ^2}+(1-\mathcal{V}-CZ)\frac{dU}{dZ}+(\mathcal{V}-1)CU=0.
 \end{align*}
 Hence, the proof is completed.
 \end{proof}
\section{Bicomplex holomorphicity and Taylor series representation of the bicomplex Miller Ross function}
In this section, we discuss the bicomplex holomorphicity of bicomplex Miller Ross function. Moreover, we establish Taylor series representation of that function.

\begin{theo}\label{th:w4}
     If $Z=w_1+jw_2=z_1e_1+z_2e_2, \mathcal{V}=\alpha +j\beta=\nu_1e_1+\nu_2e_2, C=c_1e_1+c_2e_2 \in{\mathbb{BC}}$, then for a fixed $Z\notin\mathbb{O}_2$, bicomplex Miller-Ross function $E_{\mathcal{V},C}(Z) $ is an analytic function of $\mathcal{V}$ and $C$.
\end{theo}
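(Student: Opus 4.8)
The plan is to reduce the statement to the complex case via the idempotent decomposition and then invoke the characterization of bicomplex holomorphicity in terms of idempotent components. By \eqref{eq:fa} we have
$$E_{\mathcal{V},C}(Z) = E_{\nu_1,c_1}(z_1)\,e_1 + E_{\nu_2,c_2}(z_2)\,e_2,$$
and since $Z\notin\mathbb{O}_2$ is fixed, both $z_1$ and $z_2$ are nonzero complex numbers, so fixed branches of $z_1^{\nu_1}$ and $z_2^{\nu_2}$ may be chosen. The crucial observation is that the $e_1$-component depends only on the $e_1$-parts $(\nu_1,c_1)$ of $(\mathcal{V},C)$ and the $e_2$-component only on $(\nu_2,c_2)$; thus, by the Cauchy--Riemann characterization \eqref{eq:i5} (equivalently, its idempotent form), it suffices to prove that each complex Miller--Ross function $E_{\nu_i,c_i}(z_i)$ is analytic jointly in $(\nu_i,c_i)$ for fixed $z_i\neq 0$.

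First I would handle the prefactor: writing $z_i^{\nu_i}=\exp(\nu_i\log z_i)$ with the chosen branch shows $z_i^{\nu_i}$ is an entire function of $\nu_i$. Next, each summand $(c_iz_i)^r/\Gamma(\nu_i+r+1)$ is entire in $(\nu_i,c_i)$, because the reciprocal gamma function $1/\Gamma$ is entire and $(c_iz_i)^r$ is polynomial in $c_i$. The remaining point is that the series $\sum_{r\ge 0}(c_iz_i)^r/\Gamma(\nu_i+r+1)$ is a uniform limit of analytic functions on every compact subset of the $(\nu_i,c_i)$-space; the infinite radius of convergence established in the first theorem (see \eqref{eq:mi}) delivers exactly this local uniform convergence, so Weierstrass's theorem yields that the sum is analytic in $(\nu_i,c_i)$. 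Multiplying by the entire factor $z_i^{\nu_i}$ preserves analyticity, giving the claim for each component.

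The main obstacle will be making the local uniform convergence precise in the two parameters at once: one has to bound $\left|(c_iz_i)^r/\Gamma(\nu_i+r+1)\right|$ uniformly for $(\nu_i,c_i)$ ranging over a compact set, for which the growth of $|\Gamma(\nu_i+r+1)|$ (via a Stirling estimate) dominates the geometric factor $|c_iz_i|^r$, guaranteeing a convergent majorant independent of the parameters. Once each $E_{\nu_i,c_i}(z_i)$ is shown analytic, the idempotent characterization of bicomplex holomorphicity---namely that $g_1(z_1)e_1+g_2(z_2)e_2$ is bicomplex holomorphic precisely when $g_1,g_2$ are holomorphic and satisfy \eqref{eq:i5}---lets me conclude that $E_{\mathcal{V},C}(Z)$ is an analytic function of the bicomplex variables $\mathcal{V}$ and $C$ for every fixed $Z\notin\mathbb{O}_2$, completing the proof.
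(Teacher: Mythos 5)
Your proposal is correct, and it follows the paper's skeleton---reduce via the idempotent decomposition \eqref{eq:fa} and transfer analyticity of the two complex Miller--Ross components to the bicomplex function---but the mathematical work is distributed quite differently. The paper does not invoke the idempotent characterization of holomorphicity as a black box: starting from \eqref{eq:fa}, it rewrites $E_{\mathcal{V},C}(Z)=f_1(\alpha,\beta)+jf_2(\alpha,\beta)$ with $f_1=\tfrac{1}{2}\left(E_{\nu_1,c_1}(z_1)+E_{\nu_2,c_2}(z_2)\right)$ and $f_2=\tfrac{i}{2}\left(E_{\nu_1,c_1}(z_1)-E_{\nu_2,c_2}(z_2)\right)$, and then verifies the bicomplex Cauchy--Riemann equations \eqref{eq:i5} by an explicit chain-rule computation (using $\nu_1=\alpha-i\beta$, $\nu_2=\alpha+i\beta$); this is, in effect, a hand proof in this special case of the equivalence you cite as ``its idempotent form.'' Conversely, the paper simply asserts the fact you prove in detail: that for fixed $z_i\neq 0$ the complex Miller--Ross function is analytic in its parameters. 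Your Stirling-majorant/Weierstrass argument supplies that locally uniform convergence, which the paper takes as known from the complex theory, and your treatment is also more complete in handling $\mathcal{V}$ and $C$ jointly, whereas the paper's written proof carries out the argument only for $\mathcal{V}$ and leaves the claim about $C$ implicit. One point to flag: the idempotent criterion for bicomplex holomorphicity that your argument hinges on is standard (it appears in the cited monographs of Price and of Luna-Elizarrar\'{a}s et al.) but is never stated in this paper, so a self-contained version of your proof should either cite it explicitly or reproduce the Cauchy--Riemann verification, which is exactly the step the paper's proof does by hand.
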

\begin{proof}
    The idempotent representation of the bicomplex Miller-Ross function is,
\begin{align}\label{eq:18}
    E_{\mathcal{V},C}(Z)=\mathbb{E}_{\nu_1,c_1}(z_1)e_1+E_{\nu_2,c_2}(z_2)e_2.
\end{align}
Setting $e_1=\frac{1+ij}{2}$ and $e_2=\frac{1-ij}{2}$ in \eqref{eq:18}, we get
\begin{align}\label{eq:19}
     E_{\mathcal{V},C}(Z)&=E_{\nu_1,c_1}(z_1)\left(\frac{1+ij}{2}\right)+E_{\nu_2,c_2}(z_2)\left(\frac{1-ij}{2}\right)\nonumber\\
     &=\frac{1}{2}(E_{\nu_1,c_1}(z_1)+E_{\nu_2,c_2}(z_2))+\frac{ij}{2}(E_{\nu_1,c_1}(z_1)-E_{\nu_2,c_2}(z_2))
\end{align}
Let us consider $ E_{\mathcal{V},C}(Z)=f_1(\alpha,\beta)+jf_2(\alpha,\beta)$. Then from the equation \eqref{eq:19}, we obtain
\begin{align*}
    &f_1(\alpha,\beta)=\frac{1}{2}(E_{\nu_1,c_1}(z_1)+E_{\nu_2,c_2}(z_2))\\
   &f_2(\alpha,\beta)= \frac{i}{2}(E_{\nu_1,c_1}(z_1)-E_{\nu_2,c_2}(z_2)).
\end{align*}
Now,
\begin{align*}
   & \frac{\partial f_1}{\partial \alpha}=\frac{1}{2}(E'_{\nu_1,c_1}(z_1)+E'_{\nu_2,c_2}(z_2)) \\
    & \frac{\partial f_1}{\partial \beta}=\frac{-i}{2}\left(E'_{\nu_1,c_1}(z_1)\right) + \frac{i}{2}\left(E'_{\nu_2,c_2}(z_2)\right)\\
    &\frac{\partial f_2}{\partial \alpha}=\frac{i}{2}\left(E'_{\nu_1,c_1}(z_1)\right) - \frac{i}{2}\left(E'_{\nu_2,c_2}(z_2)\right)\\
    &\frac{\partial f_2}{\partial \beta}=\frac{i}{2}\left(E'_{\nu_1,c_1}(z_1)\right)\cdot(-i) - \frac{i}{2}\left(E'_{\nu_2,c_2}(z_2)\right)\cdot (i_1)\\
    &\hspace{20pt}= \frac{1}{2}\left(E'_{\nu_1,c_1}(z_1)+E'_{\nu_2,c_2}(z_2)\right)
\end{align*}
from the above equation, we get
\begin{align*}
    \frac{\partial f_1}{\partial \alpha}= \frac{\partial f_2}{\partial \beta} \quad
      and \quad  \frac{\partial f_1}{\partial \beta}=- \frac{\partial f_2}{\partial \alpha}.
\end{align*}
Therefore, $f_1$ and $f_2$ satisfies bicomplex Cauchy-Riemann equation. Again, for fixed non zero value of  $z_1,z_2$ the complex Miller Ross function $E_{\nu_1,c_1}(z_1)$ and $E_{\nu_2,c_2}(z_2)$ are analytic function of $\nu_1$ and $\nu_2$  respectively, then for a fixed $Z\notin\mathbb{O}_2$, $f_1$ and $f_2$ are holomorphic functions of $\alpha$ and $\beta$. By using analyticity condition of the bicomplex function \eqref{eq:i5} we obtain, bicomplex Miller-Ross function $E_{\mathcal{V},C}(Z)$ is an analytic function of $\mathcal{V}$.
\end{proof}
\begin{theo}
     Suppose that $\mathcal{V}=\alpha +j\beta=\nu_1e_1+\nu_2e_2, C=c_1e_1+c_2e_2 \in{\mathbb{BC}}$ with $Z_0\in\mathbb{BC}-\mathbb{O}_2$ is a fixed bicomplex number, then Taylor series representation of bicomplex Miller Ross function is given by
     \begin{align*}
          E_{\mathcal{V},C}(Z_0)=A\sum_{k=0}^{\infty}\frac{(\log Z_0)^k}{k!}(\mathcal{V})^k ,
     \end{align*}
     where $A=\sum\limits_{r=0}^{\infty}\frac{(CZ_0)^r}{\Gamma{(\mathcal{V}+r+1)}}$.
\end{theo}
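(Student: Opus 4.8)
The plan is to exploit the idempotent decomposition that underlies every computation in this paper, reducing the bicomplex statement to two ordinary complex Maclaurin expansions. Writing $Z_0=z_{01}e_1+z_{02}e_2$ with $z_{01},z_{02}\neq 0$ (which is exactly the hypothesis $Z_0\notin\mathbb{O}_2$), I would begin from the idempotent form \eqref{eq:fa},
\[
E_{\mathcal{V},C}(Z_0)=E_{\nu_1,c_1}(z_{01})e_1+E_{\nu_2,c_2}(z_{02})e_2,
\]
and treat each component separately. From the defining series \eqref{eq:5}, each complex factor splits as $E_{\nu_j,c_j}(z_{0j})=z_{0j}^{\nu_j}A_j$, where $A_j=\sum_{r=0}^{\infty}(c_jz_{0j})^r/\Gamma(\nu_j+r+1)$ is precisely the $e_j$-component of $A$.

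The crux is then the single-variable identity $z_{0j}^{\nu_j}=\exp(\nu_j\log z_{0j})$, valid because $z_{0j}\neq 0$, together with the everywhere-convergent Maclaurin series of the exponential, which gives
\[
z_{0j}^{\nu_j}=\sum_{k=0}^{\infty}\frac{(\nu_j\log z_{0j})^k}{k!}=\sum_{k=0}^{\infty}\frac{(\log z_{0j})^k}{k!}\,\nu_j^{\,k}.
\]
Multiplying by $A_j$ produces the component-wise version of the claimed formula. By Theorem \ref{th:w4}, $E_{\mathcal{V},C}(Z_0)$ is analytic in $\mathcal{V}$, so this power series is its genuine expansion in $\mathcal{V}$ about the origin within each idempotent slot; note that the prefactor $A$ itself depends on $\mathcal{V}$, so the assertion is really the expansion of the $Z_0^{\mathcal{V}}$ factor alone.

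To finish, I would reassemble the two components via the idempotent multiplication rules. Since $e_1,e_2$ are orthogonal idempotents, one has $\mathcal{V}^k=\nu_1^{\,k}e_1+\nu_2^{\,k}e_2$ and, interpreting $\log Z_0=(\log z_{01})e_1+(\log z_{02})e_2$, also $(\log Z_0)^k=(\log z_{01})^k e_1+(\log z_{02})^k e_2$; hence $(\log Z_0)^k\mathcal{V}^k=(\log z_{01})^k\nu_1^{\,k}e_1+(\log z_{02})^k\nu_2^{\,k}e_2$. Combining with $A=A_1e_1+A_2e_2$ and collecting the $e_1$- and $e_2$-parts recovers $E_{\nu_1,c_1}(z_{01})e_1+E_{\nu_2,c_2}(z_{02})e_2=E_{\mathcal{V},C}(Z_0)$, which is the assertion.

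The main obstacle I anticipate is conceptual rather than computational: one must fix a consistent definition of the bicomplex logarithm $\log Z_0$ and the bicomplex power $Z_0^{\mathcal{V}}=\exp(\mathcal{V}\log Z_0)$ on $\mathbb{BC}\setminus\mathbb{O}_2$, and verify that these agree with the componentwise logarithms and powers used above; the hypothesis $Z_0\notin\mathbb{O}_2$ is exactly what guarantees $z_{01},z_{02}\neq 0$ so that each $\log z_{0j}$ exists. The regrouping of the product of $A$ with the exponential series is harmless, because each idempotent component is an ordinary absolutely convergent complex series, so nothing beyond termwise recombination is needed.
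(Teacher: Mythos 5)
Your proposal is correct and follows essentially the same route as the paper: both reduce to idempotent components, write $z_{0j}^{\nu_j}=\exp(\nu_j\log z_{0j})$ (which is where $Z_0\notin\mathbb{O}_2$ is used), expand the exponential as its Maclaurin series, and recombine via the rules $\mathcal{V}^k=\nu_1^ke_1+\nu_2^ke_2$ and $(\log Z_0)^k=(\log z_{01})^ke_1+(\log z_{02})^ke_2$. The only cosmetic difference is that you start from the idempotent form \eqref{eq:fa} and split each complex Miller--Ross factor, whereas the paper splits $Z_0^{\mathcal{V}}$ directly from the defining series \eqref{eq:gh}; your added remark that the formula is genuinely an expansion of the factor $Z_0^{\mathcal{V}}$ alone (since $A$ depends on $\mathcal{V}$) is a fair clarification, not a deviation.
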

\begin{proof}
    Let us consider $Z=Z_0=z_{10}e_1+z_{20}e_2$ be a fixed non singular bicomplex number. Now using properties of bicomplex logarithmic function \cite{multicomplex} and \eqref{eq:gh}, we obtain
    \begin{align*}
         E_{\mathcal{V},C}(Z_0)&=Z_0^\mathcal{V}
  \sum_{r=0}^{\infty}\frac{(CZ_0)^r}{\Gamma{(\mathcal{V}+r+1)}}\\
  &=\exp\left(\log Z_0^\mathcal{V}\right)\sum_{r=0}^{\infty}\frac{(CZ_0)^r}{\Gamma{(\mathcal{V}+r+1)}}\\
  &=\exp\left(\log z_{10}^{\nu_1}e_1+\log z_{20}^{\nu_2}e_2\right)\sum_{r=0}^{\infty}\frac{(CZ_0)^r}{\Gamma{(\mathcal{V}+r+1)}}\\
  %&=\left[\exp(\log z_{10}^{\nu_1})e_1+\exp(\log z_{20}^{\nu_2})e_2)\right]\sum_{n=0}^{\infty}\frac{(CZ_0)^n}{\Gamma{(\mathcal{V}+n+1)}}\\
  &=\left[\exp(\nu_1\log z_{10})e_1+\exp(\nu_2\log z_{20})e_2)\right]\sum_{r=0}^{\infty}\frac{(CZ_0)^r}{\Gamma{(\mathcal{V}+r+1)}}\\
  &=\left[\sum_{k=0}^{\infty}\frac{(\nu_1 \log z_{10})^k}{k!}e_1+\sum_{k=0}^{\infty}\frac{(\nu_2 \log z_{20})^k}{k!}e_2\right]\sum_{r=0}^{\infty}\frac{(CZ_0)^r}{\Gamma{(\mathcal{V}+r+1)}}\\
  &=\left[\sum_{k=0}^{\infty}\frac{(\nu_1e_1+\nu_2e_2)^k (\log z_{10}e_1+\log z_{20}e_2)^k}{k!}\right]\sum_{r=0}^{\infty}\frac{(CZ_0)^r}{\Gamma{(\mathcal{V}+r+1)}}\\
  &=\left[\sum_{k=0}^{\infty}\frac{(\log Z_0)^k}{k!}(\mathcal{V})^k\right]\sum_{r=0}^{\infty}\frac{(CZ_0)^r}{\Gamma{(\mathcal{V}+r+1)}}\\
  &=A\sum_{k=0}^{\infty}\frac{(\log Z_0)^k}{k!}(\mathcal{V})^k .
    \end{align*}
    Hence the proof is completed.
\end{proof}
\begin{theo}
    Assume that $Z=z_1e_1+z_2e_2, \mathcal{V}=\alpha +j\beta=\nu_1e_1+\nu_2e_2,C=c_1e_1+c_2e_2 \in{\mathbb{BC}}$, with satisfies the conditions $\alpha=\frac{l_1+l_2}{2}$ and $\beta=\frac{l_2-l_1}{2i}$ where $l_1,l_2\in \mathbb{Z}$, then bicomplex Miller Ross function $E_{\mathcal{V},C}(Z)$ is an entier function of $Z$.
\end{theo}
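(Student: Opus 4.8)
The plan is to reduce the bicomplex assertion to two independent complex assertions by means of the idempotent decomposition, and then to show that each complex Miller--Ross function with integer order extends to an entire function.

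First I would convert the hypotheses on $\alpha$ and $\beta$ into conditions on the idempotent components. Since $\mathcal{V}=\alpha+j\beta=\nu_1e_1+\nu_2e_2$ yields $\nu_1=\alpha-i\beta$ and $\nu_2=\alpha+i\beta$, substituting $\alpha=\frac{l_1+l_2}{2}$ and $\beta=\frac{l_2-l_1}{2i}$ gives $\nu_1=l_1$ and $\nu_2=l_2$ with $l_1,l_2\in\mathbb{Z}$. By the idempotent representation \eqref{eq:fa} we then have $E_{\mathcal{V},C}(Z)=E_{l_1,c_1}(z_1)e_1+E_{l_2,c_2}(z_2)e_2$. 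Because a bicomplex function of the form $g_1(z_1)e_1+g_2(z_2)e_2$ is bicomplex holomorphic exactly when $g_1$ is holomorphic in $z_1$ and $g_2$ is holomorphic in $z_2$ (the splitting that underlies the Cauchy--Riemann equations \eqref{eq:i5}), it suffices to prove that each complex Miller--Ross function $E_{l_j,c_j}(z_j)$, $j=1,2$, is entire in its argument.

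Next I would establish that $E_{l,c}(z)$ is entire for every $l\in\mathbb{Z}$, treating two cases. If $l\geq0$, then $z^{l}$ is a polynomial and the residual series $\sum_{n=0}^{\infty}\frac{(cz)^{n}}{\Gamma(l+n+1)}$ has infinite radius of convergence (the ratio of consecutive coefficients tends to $0$, as in the convergence argument of the first theorem), so the product is entire. If $l<0$, write $l=-m$ with $m\in\mathbb{N}$; here the factor $z^{-m}$ produces an apparent pole at the origin, which is the crux of the proof. To dispose of it I would use $\frac{1}{\Gamma(r)}=0$ for $r\in\mathbb{Z}-\mathbb{N}$, exactly as in the proof of the recurrence relation $E_{\mathcal{V},C}(Z)=C^{-\mathcal{V}}E_{0,C}(Z)$: the terms with $n<m$ vanish, the summation effectively begins at $n=m$, and re-indexing $n=m+p$ collapses the series to $E_{-m,c}(z)=z^{-m}\sum_{p=0}^{\infty}\frac{(cz)^{m+p}}{p!}=c^{m}e^{cz}$, which is manifestly entire.

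Finally I would assemble the conclusion: both idempotent components $E_{l_1,c_1}(z_1)$ and $E_{l_2,c_2}(z_2)$ are entire complex functions, so by the splitting principle $E_{\mathcal{V},C}(Z)$ is bicomplex holomorphic on all of $\mathbb{BC}$, that is, entire in $Z$. The genuine obstacle is confined to the negative-order case, where the apparent singularity coming from $Z^{\mathcal{V}}$ must be shown to be removable; the index shift above does precisely this, and it also explains why the restriction $Z\notin\mathbb{O}_2$ imposed in the earlier theorems is not required here, since the integer-order functions are holomorphic across the whole plane.
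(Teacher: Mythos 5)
Your proposal is correct and follows essentially the same route as the paper: convert the hypotheses to $\nu_1=l_1$, $\nu_2=l_2\in\mathbb{Z}$, handle non-negative integer orders directly via the everywhere-convergent series, reduce negative integer orders to $E_{\nu,c}(z)=c^{-\nu}E_{0,c}(z)=c^{-\nu}e^{cz}$ using the vanishing of $1/\Gamma$ at non-positive integers (the paper cites its recurrence relation (i) for this), and then assemble the bicomplex conclusion through the idempotent splitting and the Cauchy--Riemann criterion. Your write-up is in fact more explicit than the paper's own proof, which leaves the final holomorphicity step as ``similar technique as the proof of the previous theorem.''
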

\begin{proof}
     Since $\alpha=\frac{l_1+l_2}{2}$ and $\beta=\frac{l_2-l_1}{2i}$, idempotent components of $\mathcal{V}$ are given by $\nu_1=l_1$ and $\nu_2=l_2$, where $l_1,l_2\in \mathbb{Z}$. Now from the series representation of Miller Ross function \eqref{eq:5}, we see that if $\nu\in\mathbb{N}\cup \{0\}$, then $E_{\nu,c}(z)$ is analytic function of $z\in\mathbb{C}$. For negative integer values of $\nu$ complex Miller Ross function can be written as $E_{\nu,c}(z)=c^{-\nu}E_{0,c}(z)$ and is also analytic function of $z\in\mathbb{C}$. Hence we can proceed to complete the proof using similar technique as the proof of the previous theorem.

\end{proof}
 \section{Bicomplex fractional differential equations and bicomplex generalized fractional kinetic equation}
 In \cite{frac-kinetic}, the authors discussed the standard form of the kinetic equation and extended it to a fractional kinetic equation, subsequently obtaining its solutions. The fractional generalization of the standard kinetic equation is given by
\begin{align}\label{eq:l9}
    N(t)-N_0=-c^\nu D^{-\nu}_tN(t),
\end{align}
where $\nu>0$ and $D^{-\nu}_t$ is the Riemann-Liouville fractional integral operator. Furthermore in \cite{gen-kinetic},  the authors explored generalizations of the fractional kinetic equation in terms of the Mittag-Leffler functions. In this section, we investigate the solution of new bicomplex generalization of the fractional kinetic equation \eqref{eq:l9} and established some fractional differential equations whose solutions is related to bicomplex Miller Ross function.\\
The Riemann-Liouville fractional integration of bicomplex order for function $f(t)=t^u$ is given by \cite{rlfo}:
 \begin{align}\label{eq:k5}
     D^{-M}_tt^u=\frac{\Gamma(u+1)}{\Gamma(u+M+1)}t^{u+M},
 \end{align}
 where $M=m_1+jm_2\in\mathbb{BC}$ with $\Re(m_1)>|\Im(m_2)|$ and $u>-1$. The bicomplex generalization of  \eqref{eq:l9} is given by replacing the Riemann-Liouville fractional operator $D^{-\nu}_t$ by the Riemann- Liouville fractional operator of bicomplex order $D^{-\mathcal{V}}_t$ in \eqref{eq:l9} as follows
  \begin{align*}
    N(t)-N_0=-c^\mathcal{V} D^{-\mathcal{V}}_tN(t),
     \end{align*}
     where $\mathcal{V}=\alpha +j\beta\in\mathbb{BC}$ with satisfies the condition $\Re(\alpha)>|\Im(\beta)|$.
     \begin{theo}\label{th:a2}
         Suppose that $\mathcal{V}=\alpha +j\beta \in{\mathbb{BC}}$, $c>0$ with satisfies the condition $\Re(\alpha)>|\Im(\beta)|$, then the solution of the bicomplex generalization of the fractional kinetic equation
    \begin{align}\label{eq:n7}
    N(t)-N_0=-c^\mathcal{V} D^{-\mathcal{V}}_tN(t),
     \end{align}
is given by
\begin{align*}
    N(t)=N_0\sum_{r=0}^{\infty}\frac{(-1)^r(ct) ^{{\mathcal{V}r}}}{\Gamma(\mathcal{V}r+1)}.
\end{align*}
     \end{theo}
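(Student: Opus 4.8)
The plan is to solve \eqref{eq:n7} by successive substitution (a Picard-type iteration), using the explicit formula \eqref{eq:k5} to evaluate the repeated action of the Riemann--Liouville operator of bicomplex order on the constant $N_0$. First I would rewrite \eqref{eq:n7} in fixed-point form
\begin{align*}
 N(t)=N_0+L\,N(t),\qquad L:=-c^{\mathcal{V}}D_t^{-\mathcal{V}},
\end{align*}
and substitute this identity into its own right-hand side repeatedly. Using the linearity of $D_t^{-\mathcal{V}}$ and the composition (semigroup) rule $D_t^{-\mathcal{V}}\!\circ\! D_t^{-\mathcal{V}}=D_t^{-2\mathcal{V}}$, so that $L^{r}=(-1)^{r}c^{r\mathcal{V}}D_t^{-r\mathcal{V}}$, after $n+1$ substitutions one obtains
\begin{align*}
 N(t)=\sum_{r=0}^{n}L^{r}N_0+L^{\,n+1}N(t).
\end{align*}

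The heart of the computation is the evaluation of $L^{r}N_0$. Writing the constant as $N_0=N_0\,t^{0}$ and applying \eqref{eq:k5} with $u=0$ and $M=r\mathcal{V}$ gives
\begin{align*}
 D_t^{-r\mathcal{V}}N_0=N_0\,\frac{\Gamma(1)}{\Gamma(r\mathcal{V}+1)}\,t^{r\mathcal{V}}
 =\frac{N_0\,t^{r\mathcal{V}}}{\Gamma(r\mathcal{V}+1)},
\end{align*}
so that, since $c>0$ implies $c^{r\mathcal{V}}t^{r\mathcal{V}}=(ct)^{r\mathcal{V}}$, we get $L^{r}N_0=N_0\,(-1)^{r}(ct)^{r\mathcal{V}}/\Gamma(r\mathcal{V}+1)$. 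Equivalently, applying $D_t^{-\mathcal{V}}$ step by step and using \eqref{eq:k5} repeatedly makes the intermediate Gamma factors telescope to the same expression. It is convenient to pass to the idempotent representation $\mathcal{V}=\nu_1e_1+\nu_2e_2$, in which $c^{\mathcal{V}}=c^{\nu_1}e_1+c^{\nu_2}e_2$ and $D_t^{-\mathcal{V}}$ acts componentwise as $D_t^{-\nu_1}$ on the $e_1$-part and $D_t^{-\nu_2}$ on the $e_2$-part; the linearity and composition rule for $D_t^{-\mathcal{V}}$ then follow from the classical facts for the complex Riemann--Liouville operator in each component. Summing the finite (Neumann-type) series yields the partial sums $N_0\sum_{r=0}^{n}(-1)^{r}(ct)^{r\mathcal{V}}/\Gamma(r\mathcal{V}+1)$.

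The \emph{main obstacle} is to control the remainder $L^{\,n+1}N(t)$ and pass to the limit $n\to\infty$. Working componentwise, the hypothesis $\Re(\alpha)>|\Im(\beta)|$ forces $\Re(\nu_1)=\Re(\alpha)+\Im(\beta)>0$ and $\Re(\nu_2)=\Re(\alpha)-\Im(\beta)>0$, so the orders $(n+1)\nu_1,(n+1)\nu_2$ grow and the super-exponential growth of $\Gamma((n+1)\nu_i+1)$ dominates the factor $c^{(n+1)\nu_i}$; hence on any bounded interval $L^{\,n+1}N(t)\to 0$ and the tail series is absolutely hyperbolically convergent, so the partial sums converge to the stated $N(t)$. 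A clean alternative that sidesteps estimating the remainder is to split \eqref{eq:n7} into its two complex kinetic equations via the idempotent decomposition and solve each by the (bicomplex) Laplace transform, using $\mathcal{L}\{D_t^{-\mathcal{V}}f\}(s)=s^{-\mathcal{V}}\mathcal{L}\{f\}(s)$, the expansion of $\bigl(1+(c/s)^{\mathcal{V}}\bigr)^{-1}$ as a geometric series, and the term-by-term inversion $\mathcal{L}^{-1}\{s^{-r\mathcal{V}-1}\}=t^{r\mathcal{V}}/\Gamma(r\mathcal{V}+1)$, which reproduces the same series. Finally I would verify directly that the resulting $N(t)$ satisfies \eqref{eq:n7} together with $N(0)=N_0$, which closes the argument.
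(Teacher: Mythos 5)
Your proposal is correct, but your primary route is genuinely different from the paper's. The paper proves Theorem \ref{th:a2} entirely by the bicomplex Laplace transform: it applies $L\{D_t^{-\mathcal{V}}f;W\}=W^{-\mathcal{V}}F(W)$ to \eqref{eq:n7}, solves for $N(W)=\frac{N_0}{W[1+(cW^{-1})^{\mathcal{V}}]}$, expands this as a geometric series valid for $|W|_h>c$, and inverts term by term --- i.e.\ exactly the ``clean alternative'' you mention only in passing. Your main argument instead builds the solution by successive substitution: iterating $N=N_0+LN$ with $L=-c^{\mathcal{V}}D_t^{-\mathcal{V}}$, evaluating $L^rN_0$ via the semigroup property and the power formula \eqref{eq:k5} with $u=0$, and controlling the remainder $L^{n+1}N(t)$. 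What your route buys is that it is more elementary and self-contained (no transform pair, no term-by-term inversion, no geometric expansion in the hyperbolic norm --- steps the paper performs formally without justification), and it makes visible exactly where the hypothesis $\Re(\alpha)>|\Im(\beta)|$ enters, namely $\Re(\nu_1),\Re(\nu_2)>0$ so that the operator is defined componentwise and $\Gamma(r\nu_i+1)$ grows fast enough; the paper never displays this. What it costs is the remainder estimate: bounding $L^{n+1}N(t)$ requires an a priori bound on the unknown $N$ (e.g.\ boundedness on compact intervals) plus a Stirling-type lower bound on $|\Gamma((n+1)\nu_i+1)|$, which you only sketch; your closing step of directly verifying that the series satisfies \eqref{eq:n7} (using \eqref{eq:k5} termwise) is what actually closes this gap, and is worth writing out, since without it the iteration only shows that \emph{if} a sufficiently regular solution exists, it must equal the stated series.
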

\begin{proof}
    The bicomplex  Laplace  transform  of  the  Riemann-Liouville  integral  operator  of bicomplex order is given by
    \begin{align*}
        L\{D^{-\mathcal{V}}_tf(t);W\}=W^{-\mathcal{V}}F(W),
    \end{align*}
    where
    \begin{align*}
        F(W)=\int_0^\infty e^{-Wy}f(y)dy.
    \end{align*}
    Applying bicomplex Laplace transform to both side of \eqref{eq:n7}, we obtain
    \begin{align}\label{eq:j6}
        N(W)&=\frac{N_0}{W[1+(cW^{-1})^\mathcal{V}]}\nonumber\\
        &=\frac{N_0}{W}\sum_{r=0}^{\infty}(-1)^r(cW^{-1})^{\mathcal{V}r}\;\;(\mbox{provived}\; |W|_h>c)\nonumber\\
        &=N_0\sum_{r=0}^{\infty}\frac{(-1)^rc^{{\mathcal{V}r}}}{W^{\mathcal{V}r+1}}.
    \end{align}
    Now, taking the inverse Laplace transform of \eqref{eq:j6}, we get
    \begin{align*}
    N(t)=N_0\sum_{r=0}^{\infty}\frac{(-1)^r(ct) ^{{\mathcal{V}r}}}{\Gamma(\mathcal{V}r+1)}.
    \end{align*}
    Hence, the proof is completed.
\end{proof}
\begin{remark}
    Setting $\beta=0$ in Theorem \ref{th:a2}, we obtain the solution of fractional kinetic equation, which coincides with the results discussed in \cite{frac-kinetic}.
\end{remark}
\begin{theo}\label{th:h8}
    Assume that $\mathcal{V}=\alpha +j\beta=\nu_1e_1+\nu_2e_2,C=c_1e_1+c_2e_2 \in{\mathbb{BC}}$, $c>0$ with satisfies the condition $\Re(\alpha)>|\Im(\beta)|$, then the solution of the bicomplex fractional integral equation
    \begin{align}\label{eq:s8}
    N(t)-N_0\exp(Ct)=-c^\mathcal{V} D^{-\mathcal{V}}_tN(t),
     \end{align}
is given by
\begin{align*}
    N(t)=\sum_{r=0}^\infty N_0(-c^\mathcal{V})^kE_{\mathcal{V}k,C}(t).
\end{align*}
\end{theo}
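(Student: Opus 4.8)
The plan is to follow the Laplace-transform method already used in Theorem \ref{th:a2}. The essential preliminary step I would carry out first is to compute the bicomplex Laplace transform of the Miller-Ross function itself. Starting from \eqref{eq:gh} with $\mathcal{V}$ replaced by $\mathcal{V}k$ and $Z$ by $t$, we have $E_{\mathcal{V}k,C}(t)=\sum_{r=0}^{\infty}\frac{C^r t^{\mathcal{V}k+r}}{\Gamma(\mathcal{V}k+r+1)}$. Applying $L\{t^u;W\}=\frac{\Gamma(u+1)}{W^{u+1}}$ term by term, the gamma factors cancel and the resulting geometric series in $C/W$ sums, giving
\begin{align*}
    L\{E_{\mathcal{V}k,C}(t);W\}=\frac{1}{W^{\mathcal{V}k}(W-C)},\qquad |W|_h>c.
\end{align*}
This computation is cleanest performed componentwise through $e_1,e_2$, where it reduces to the classical complex Laplace transform on each idempotent slot.

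Next I would apply the bicomplex Laplace transform to \eqref{eq:s8}. Using the operational rule $L\{D^{-\mathcal{V}}_tN(t);W\}=W^{-\mathcal{V}}N(W)$ stated in the proof of Theorem \ref{th:a2}, together with $L\{\exp(Ct);W\}=\frac{1}{W-C}$, equation \eqref{eq:s8} transforms into
\begin{align*}
    N(W)-\frac{N_0}{W-C}=-c^{\mathcal{V}}W^{-\mathcal{V}}N(W),
\end{align*}
and solving algebraically for $N(W)$ yields
\begin{align*}
    N(W)=\frac{N_0}{(W-C)\bigl(1+c^{\mathcal{V}}W^{-\mathcal{V}}\bigr)}.
\end{align*}

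The final step is to expand $\bigl(1+c^{\mathcal{V}}W^{-\mathcal{V}}\bigr)^{-1}=\sum_{k=0}^{\infty}(-1)^k c^{\mathcal{V}k}W^{-\mathcal{V}k}$ as a geometric series, valid in the hyperbolic sense for $|W|_h>c$, so that
\begin{align*}
    N(W)=N_0\sum_{k=0}^{\infty}(-c^{\mathcal{V}})^k\,\frac{1}{W^{\mathcal{V}k}(W-C)}.
\end{align*}
Recognizing each summand as $L\{E_{\mathcal{V}k,C}(t);W\}$ from the preliminary computation and inverting term by term produces $N(t)=N_0\sum_{k=0}^{\infty}(-c^{\mathcal{V}})^k E_{\mathcal{V}k,C}(t)$, which is the asserted solution. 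I expect the main obstacle to be one of rigor rather than of computation: I must justify both the geometric expansion and the termwise inverse transform in the bicomplex ring with zero divisors. The natural way to handle this is to split everything into the idempotent components $e_1,e_2$, where the problem becomes two ordinary complex Laplace-transform inversions, and to invoke the hyperbolic convergence condition $|W|_h>c$ to legitimize interchanging the summation with the inverse transform on each component.
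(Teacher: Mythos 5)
Your proposal is correct, but it takes a genuinely different route from the paper's own proof of this theorem. The paper does \emph{not} use the Laplace transform here: instead it applies the operator $(-c^{\mathcal{V}})^{r}D^{-\mathcal{V}r}_{t}$ to both sides of \eqref{eq:s8}, sums over $r\ge 0$, and cancels the resulting telescoping (Neumann-type) series to isolate
\begin{align*}
N(t)=\sum_{r=0}^{\infty}N_0(-c^{\mathcal{V}})^{r}D^{-\mathcal{V}r}_{t}\exp(Ct),
\end{align*}
after which it expands $\exp(Ct)$ as a power series and evaluates $D^{-\mathcal{V}r}_{t}t^{n}$ term by term via \eqref{eq:k5}, recognizing each summand as $E_{\mathcal{V}r,C}(t)$. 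Your route instead extends the Laplace-transform method of Theorem \ref{th:a2}: the new ingredient you supply is the transform pair $L\{E_{\mathcal{V}k,C}(t);W\}=W^{-\mathcal{V}k}(W-C)^{-1}$, which is correct (the gamma factors cancel componentwise exactly as you say), and the rest is algebra plus the geometric expansion already used in the paper for Theorem \ref{th:a2}. The trade-off: your method produces the explicit transform-domain representation of the solution and leans on termwise inversion (justified, as you note, by splitting into idempotent components where everything is classical), while the paper's method avoids transforms altogether but silently assumes the convergence needed to cancel the two infinite series $\sum_{r\ge 1}(-c^{\mathcal{V}})^{r}D^{-\mathcal{V}r}_{t}N(t)$ on both sides — a gap of comparable size to the one you flag for yourself. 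Both arguments sit at the same level of rigor as the paper's own treatment (the condition $|W|_h>c$ for the geometric series appears in both), so your proposal is a legitimate, self-consistent alternative proof.
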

\begin{proof}
    Applying the fractional integral operator $(-c^\mathcal{V})^rD^{-\mathcal{V}r}_t$ to both side of \eqref{eq:s8}, we obtain
    \begin{align*}
        (-c^\mathcal{V})^rD^{-\mathcal{V}r}_t N(t)-N_0(-c^\mathcal{V})^rD^{-\mathcal{V}r}_t\exp(Ct)=(-c^\mathcal{V})^{r+1} D^{-\mathcal{V}(r+1)}_tN(t),
    \end{align*}
    where $r\in\mathbb{N}\cup \{0\}$. Taking the sum from $r=0$ to $\infty$, we get
    \begin{align}\label{eq:d6}
        \sum_{r=0}^\infty(-c^\mathcal{V})^rD^{-\mathcal{V}r}_t N(t)- \sum_{r=0}^\infty N_0(-c^\mathcal{V})^rD^{-\mathcal{V}r}_t\exp(Ct)    &= \sum_{r=0}^\infty (-c^\mathcal{V})^{r+1} D^{-\mathcal{V}(r+1)}_tN(t)\nonumber\\
        &=\sum_{r=1}^\infty(-c^\mathcal{V})^r D^{-\mathcal{V}r}_tN(t).
    \end{align}
    After cancel out equal terms of \eqref{eq:d6}, we get
    \begin{align}\label{eq:f5}
        N(t)&=\sum_{r=0}^\infty N_0(-c^\mathcal{V})^rD^{-\mathcal{V}r}_t\exp(Ct)\nonumber\\
        &=\sum_{r=0}^\infty N_0(-c^\mathcal{V})^rD^{-\mathcal{V}r}_t\left[\sum_{n=0}^\infty\frac{(Ct)^n}{n!}\right]\nonumber\\
        &=\sum_{r=0}^\infty N_0(-c^\mathcal{V})^k\left[\sum_{n=0}^\infty\frac{C^n}{n!}D^{-\mathcal{V}k}_tt^n\right]
    \end{align}
    Using \eqref{eq:f5} and \eqref{eq:k5}, we have
    \begin{align*}
        N(t)&=\sum_{r=0}^\infty N_0(-c^\mathcal{V})^k\left[\sum_{n=0}^\infty\frac{C^n}{n!}\frac{\Gamma(n+1)}{\Gamma(n+\mathcal{V}k+1)}t^{n+\mathcal{V}k}\right]\\
        &=\sum_{r=0}^\infty N_0(-c^\mathcal{V})^kE_{\mathcal{V}k,C}(t).
    \end{align*}
    Hence, the proof is completed.
\end{proof}
\begin{theo}
     Suppose that $\mathcal{V}=\alpha +j\beta=\nu_1e_1+\nu_2e_2,\mu=\gamma+j\delta=\mu_1e_1+\mu_2e_2,C=c_1e_1+c_2e_2,Z_0=pe_1+qe_2 \in{\mathbb{BC}}$ with satisfies the conditions $\Re(\alpha)>|\Im(\beta)|$, $\Re(\gamma)+1>|\Im(\delta)|$ and $Z_0\notin\mathbb{O}_2$, then for the solution of the bicomplex fractional integral equation
    \begin{align}\label{eq:s6}
    N(t)-N_0E_{\mu k,C}(Z_0t)=-c^\mathcal{V} D^{-\mathcal{V}}_tN(t),
     \end{align}
the following result holds
\begin{align*}
    N(t)=\sum_{k=0}^\infty N_0(-c^\mathcal{V})^kZ_0^{-\mathcal{V}k}E_{(\mu+\mathcal{V}) k,C}(Z_0t).
\end{align*}
\end{theo}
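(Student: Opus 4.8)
The plan is to mirror the telescoping argument used in the proof of Theorem~\ref{th:h8}, since \eqref{eq:s6} has exactly the same structure as the equation in that theorem but with the exponential forcing term replaced by a bicomplex Miller--Ross forcing term. First I would apply the iterated Riemann--Liouville operator $(-c^{\mathcal{V}})^{r}D^{-\mathcal{V}r}_{t}$ to both sides of \eqref{eq:s6}, obtaining $(-c^{\mathcal{V}})^{r}D^{-\mathcal{V}r}_{t}N(t)-N_{0}(-c^{\mathcal{V}})^{r}D^{-\mathcal{V}r}_{t}[\,\cdots\,]=(-c^{\mathcal{V}})^{r+1}D^{-\mathcal{V}(r+1)}_{t}N(t)$ for each $r\in\mathbb{N}\cup\{0\}$, where $[\,\cdots\,]$ denotes the Miller--Ross forcing term on the right of \eqref{eq:s6}. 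Summing over $r$ from $0$ to $\infty$ and using the composition rule $D^{-\mathcal{V}r}_{t}D^{-\mathcal{V}}_{t}=D^{-\mathcal{V}(r+1)}_{t}$, the two series of iterated integrals of $N$ telescope and cancel all terms except the $r=0$ term, leaving $N(t)=\sum_{k=0}^{\infty}N_{0}(-c^{\mathcal{V}})^{k}D^{-\mathcal{V}k}_{t}[\,\cdots\,]$.

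The key step is then to evaluate $D^{-\mathcal{V}k}_{t}$ acting on the forcing term. Expanding it through the defining series \eqref{eq:gh} as $\sum_{n\ge0}\frac{C^{n}Z_{0}^{\,\mu+n}}{\Gamma(\mu+n+1)}\,t^{\mu+n}$ and applying the bicomplex fractional-power formula \eqref{eq:k5} term by term, each monomial $t^{\mu+n}$ is sent to $\frac{\Gamma(\mu+n+1)}{\Gamma(\mu+\mathcal{V}k+n+1)}\,t^{\mu+\mathcal{V}k+n}$, so the $\Gamma(\mu+n+1)$ factors cancel and the resulting series resums, after pulling out a factor $Z_{0}^{-\mathcal{V}k}$, into the shift identity $D^{-\mathcal{V}k}_{t}E_{\mu,C}(Z_{0}t)=Z_{0}^{-\mathcal{V}k}E_{\mu+\mathcal{V}k,C}(Z_{0}t)$. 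Substituting this into the telescoped sum yields the asserted closed form $N(t)=\sum_{k=0}^{\infty}N_{0}(-c^{\mathcal{V}})^{k}Z_{0}^{-\mathcal{V}k}E_{\mu+\mathcal{V}k,C}(Z_{0}t)$.

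I would carry out all of the above in the idempotent representation \eqref{eq:fa}, splitting every object into its $e_{1}$ and $e_{2}$ components so that the bicomplex statement reduces to two classical identities for the complex Miller--Ross function; the hypotheses $\Re(\gamma)+1>|\Im(\delta)|$ and $Z_{0}\notin\mathbb{O}_{2}$ guarantee that each component exponent $\mu_{1}+n$, $\mu_{2}+n$ has real part exceeding $-1$ and that $Z_{0}^{-\mathcal{V}k}$ is well defined, so that \eqref{eq:k5} applies in each slot. The main obstacle I anticipate is analytic rather than algebraic: one must justify interchanging the infinite summation over $n$ with the fractional integral operator, and earlier interchanging the summation over $r$ with $D^{-\mathcal{V}r}_{t}$ together with the telescoping cancellation. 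This requires the hyperbolic absolute convergence of the Miller--Ross series (established in the first theorem of Section~2) together with a dominated- or uniform-convergence argument on compact $t$-intervals, and it is where the convergence of the underlying Neumann-type series (the analogue of the condition $|W|_{h}>c$ from the Laplace-transform proof of Theorem~\ref{th:h8}) must be verified.
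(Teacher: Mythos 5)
Your proposal follows essentially the same route as the paper: the paper's proof first establishes the shift identity $D^{-\mathcal{V}k}_{t}E_{\mu k,C}(Z_{0}t)=Z_{0}^{-\mathcal{V}k}E_{(\mu+\mathcal{V})k,C}(Z_{0}t)$ by expanding the Miller--Ross series in the idempotent representation and applying \eqref{eq:k5} term by term, and then invokes the telescoping argument of Theorem~\ref{th:h8} verbatim, exactly as you describe. The only differences are cosmetic --- you write the forcing index as $\mu$ rather than the paper's $\mu k$ (a notational quirk of the paper's own statement, where $k$ appears free in \eqref{eq:s6}), and you explicitly flag the series--operator interchange and convergence issues that the paper passes over in silence.
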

\begin{proof}
Using idempotent representation of the Riemann-Lioville fractional integration of bicomplex oreder \cite{rlfo}, we obtain
\begin{align*}
    D^{-\mathcal{V}k}_tE_{\mu k,C}(Z_0t)&=D^{-\nu_1k}_tE_{\mu_1 k,c_1}(pt)e_1+D^{-\nu_2k}_tE_{\mu_2 k,c_2}(qt)e_2\\
    &=(p)^{\mu_1k}\sum_{n=0}^{\infty}\frac{(c_1p)^n}{\Gamma(\mu_1k+n+1)}D^{-\nu_1k}_tt^{\mu_1k+n}e_1\\&\quad+(q)^{\mu_2k}\sum_{n=0}^{\infty}\frac{(c_2q)^n}{\Gamma(\mu_2k+n+1)}D^{-\nu_2k}_tt^{\mu_2k+n}e_2\\
    &=(p)^{\mu_1k}\sum_{n=0}^{\infty}\frac{(c_1p)^n}{\Gamma(\mu_1k+\nu_1k+n+1)}t^{\mu_1k+\nu_1k+n}e_1\\&\quad+(q)^{\mu_2k}\sum_{n=0}^{\infty}\frac{(c_2q)^n}{\Gamma(\mu_2k+\nu_2k+n+1)}t^{\mu_2k+\nu_2k+n}e_2\\
    &=(p)^{-\nu_1k}E_{(\mu_1+\nu_1) k,c_1}(pt)e_1+(q)^{-\nu_2k}E_{(\mu_2+\nu_2) k,c_2}(qt)e_2\\
    &=Z_0^{-\mathcal{V}k}E_{(\mu+\mathcal{V}) k,C}(Z_0t).
\end{align*}
  Proceeding in a similar manner of Theorem\ref{th:h8}, it is observed that
  \begin{align*}
      N(t)&=\sum_{k=0}^\infty N_0(-c^\mathcal{V})^kD^{-\mathcal{V}k}_tE_{\mu k,C}(Z_0t)\nonumber\\
      &=\sum_{k=0}^\infty N_0(-c^\mathcal{V})^kZ_0^{-\mathcal{V}k}E_{(\mu+\mathcal{V}) k,C}(Z_0t).
  \end{align*}
  Hence, the proof is completed.
\end{proof}
 The Riemann-Lioville fractional differentiation of bicomplex order for function $f(t)=t^u$ is given by \cite{rlfo} :
 \begin{align}\label{eq:kp}
     D^M_tt^u=\frac{\Gamma(u+1)}{\Gamma(u-M+1)}t^{u-M},
 \end{align}
 where $M=m_1+jm_2\in\mathbb{BC}$ with $\Re(m_1)>0$ and $u>-1$.
 \begin{theo}\label{th:6a}
     Let $Z_0=pe_1+qe_2, \mathcal{V}=\alpha +j\beta=\nu_1e_1+\nu_2e_2,C=c_1e_1+c_2e_2 \in{\mathbb{BC}}$ with satisfies the conditions $\alpha\in\mathbb{R}$, $\beta\in i\mathbb{R}$ and $\alpha+1>|\beta|$. Then Riemann-Lioville fractional derivative of bicomplex Miller Ross function is given by
     \begin{align*}
          D^M_t E_{\mathcal{V},C}(Z_0t)=Z_0^ME_{\mathcal{V}-M,C}(Z_0t),
     \end{align*}
     where $M=m_1+jm_2\in\mathbb{BC}$ with $\Re(m_1)>0$.
 \end{theo}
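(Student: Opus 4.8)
The plan is to follow the idempotent-decomposition strategy used throughout the paper, reducing the bicomplex identity to two identical scalar computations, each governed by the Riemann--Liouville differentiation formula \eqref{eq:kp}. First I would translate the hypotheses into idempotent coordinates. Writing $\mathcal{V}=\nu_1 e_1+\nu_2 e_2$ with $\nu_1=\alpha-i\beta$ and $\nu_2=\alpha+i\beta$, the conditions $\alpha\in\mathbb{R}$ and $\beta\in i\mathbb{R}$ force both $\nu_1$ and $\nu_2$ to be real, while $\alpha+1>|\beta|$ yields $\nu_1>-1$ and $\nu_2>-1$. This is exactly what is required so that for every $r\ge 0$ the exponent $\nu_\ell+r$, which is the power of $t$ in the $\ell$-th idempotent component of $E_{\mathcal{V},C}(Z_0 t)$, satisfies the admissibility condition $u>-1$ of \eqref{eq:kp}; the hypothesis $\Re(m_1)>0$ on $M=m_1+jm_2$ is in turn what guarantees that $D^M_t$ is well defined.

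Next, using the idempotent representation \eqref{eq:fa} together with \eqref{eq:gh}, I would write
$$E_{\mathcal{V},C}(Z_0 t)=p^{\nu_1}\sum_{r=0}^{\infty}\frac{(c_1 p)^r}{\Gamma(\nu_1+r+1)}\,t^{\nu_1+r}\,e_1+q^{\nu_2}\sum_{r=0}^{\infty}\frac{(c_2 q)^r}{\Gamma(\nu_2+r+1)}\,t^{\nu_2+r}\,e_2,$$
and apply $D^M_t$ componentwise via its idempotent form $D^M_t=D^{\mu_1}_t e_1+D^{\mu_2}_t e_2$ from \cite{rlfo}, where $\mu_1,\mu_2$ are the idempotent components of $M$. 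Invoking \eqref{eq:kp} on each monomial gives $D^{\mu_\ell}_t t^{\nu_\ell+r}=\frac{\Gamma(\nu_\ell+r+1)}{\Gamma(\nu_\ell-\mu_\ell+r+1)}t^{\nu_\ell-\mu_\ell+r}$, so the factor $\Gamma(\nu_\ell+r+1)$ cancels against the denominator of the series and the first slot collapses to $p^{\nu_1}\sum_r \frac{(c_1 p)^r}{\Gamma(\nu_1-\mu_1+r+1)}t^{\nu_1-\mu_1+r}$ (and analogously with $q,\nu_2,\mu_2,c_2$ in the second). Pulling out $p^{\mu_1}$ and $q^{\mu_2}$ identifies the two components with $p^{\mu_1}E_{\nu_1-\mu_1,c_1}(pt)$ and $q^{\mu_2}E_{\nu_2-\mu_2,c_2}(qt)$, and recombining through $p^{\mu_1}e_1+q^{\mu_2}e_2=Z_0^M$ and the idempotent representation of $E_{\mathcal{V}-M,C}$ yields $Z_0^M E_{\mathcal{V}-M,C}(Z_0 t)$, the desired identity.

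The only genuinely non-routine point is the justification of the term-by-term action of $D^M_t$ on the infinite series, and I would dispatch it exactly as the scalar theory does. In each complex component the series $\sum_r \frac{(c_\ell p)^r}{\Gamma(\nu_\ell+r+1)}t^{\nu_\ell+r}$ (with $p$ replaced by $q$ in the second slot) converges absolutely and locally uniformly in $t$; this is precisely the $R_1=R_2=\infty$ conclusion of the convergence theorem established at the start of Section 2. Since $D^{\mu_\ell}_t$ acts continuously and linearly on such power functions, the operator may be moved inside the summation, after which everything reduces to the bookkeeping described above: the cancellation of $\Gamma(\nu_\ell+r+1)$ and the shift of the first parameter from $\nu_\ell$ to $\nu_\ell-\mu_\ell$. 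I expect no further obstacle, as the remaining steps are purely algebraic manipulations of the idempotent components.
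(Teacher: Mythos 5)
Your proposal is correct and follows essentially the same route as the paper's proof: expand $E_{\mathcal{V},C}(Z_0t)$ as a power series, apply the Riemann--Liouville power rule \eqref{eq:kp} term by term, cancel the factors $\Gamma(\mathcal{V}+r+1)$, and reassemble the result as $Z_0^M E_{\mathcal{V}-M,C}(Z_0t)$ via the idempotent representation. Your version is in fact slightly more careful than the paper's, since you make explicit both the reduction of the hypotheses to $\nu_1,\nu_2>-1$ (needed for the condition $u>-1$ in \eqref{eq:kp}) and the justification for interchanging $D^M_t$ with the infinite sum, points the paper passes over silently.
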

 \begin{proof}
     Using \eqref{eq:gh} and \eqref{eq:kp}, we have
     \begin{align*}
         D^M_t E_{\mathcal{V},C}(Z_0t)&=Z_0^\mathcal{V}\sum_{r=0}^{\infty}\frac{(CZ_0)^r}{\Gamma(\mathcal{V}+r+1)}D^Mt^{\mathcal{V}+r}\\&
         =Z_0^\mathcal{V}\sum_{r=0}^{\infty}\frac{(CZ_0)^r}{\Gamma(\mathcal{V}+r+1)}[D^Mt^{\nu_1+r}e_1+D^Mt^{\nu_2+r}e_2]\\
         &=Z_0^\mathcal{V}\sum_{r=0}^{\infty}\frac{(CZ_0)^r}{\Gamma(\mathcal{V}+r+1)}\left[\frac{\Gamma(\nu_1+r+1)}{\Gamma(\nu_1+r-M+1)}t^{\nu_1+r-M}e_1+\frac{\Gamma(\nu_2+r+1)}{\Gamma(\nu_2+r-M+1)}t^{\nu_2+r-M}e_2\right]\\
         &=Z_0^\mathcal{V}\sum_{r=0}^{\infty}\frac{(CZ_0)^r}{\Gamma(\mathcal{V}+r+1)}\frac{\Gamma((\mathcal{V}+r+1)}{\Gamma(\mathcal{V}+r-M+1)}t^{\mathcal{V}+r-M}\\
         &=Z_0^\mathcal{V}\sum_{r=0}^{\infty}\frac{(CZ_0)^r}{\Gamma(\mathcal{V}+r-M+1)}t^{\mathcal{V}+r-M}\\
         &=Z_0^\mathcal{V}t^{\mathcal{V}-M}\sum_{r=0}^{\infty}\frac{(CZ_0t)^r}{\Gamma(\mathcal{V}+r-M+1)}\\
         &=Z_0^ME_{\mathcal{V}-M,C}(Z_0t).
     \end{align*}
     Hence, the proof is completed.
 \end{proof}
  \begin{theo}\label{th:5s}
      Suppose that $Z_0, \mathcal{V}=\alpha +j\beta=\nu_1e_1+\nu_2e_2,C=c_1e_1+c_2e_2 \in{\mathbb{BC}}$ with satisfies the conditions $\alpha\in\mathbb{R}$, $\beta\in i\mathbb{R}$ and $\alpha+1>|\beta|$. Then $y_q=\sum\limits_{k=1}^qC^{\frac{(k-1)p}{q}}E_{\mathcal{V}-p+\frac{kp}{q},C}(Z_0t)$ satisfies the following fractional differential equation
      \begin{align*}
          [D^{\frac{p}{q}}_t-(Z_0C)^{\frac{p}{q}}]y_q=\sum\limits_{r=0}^{p-1}\frac{C^rZ_0^{\mathcal{V}+r-p+\frac{p}{q}}t^{\mathcal{V}+r-p}}{\Gamma(\mathcal{V}+r-p+1)}.
      \end{align*}
 \end{theo}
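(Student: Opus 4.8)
The plan is to apply the bicomplex Riemann--Liouville fractional derivative $D^{p/q}_t$ directly to the finite sum defining $y_q$ (with $p,q\in\mathbb{Z}^{+}$) and to exploit a telescoping cancellation against the comparison term $(Z_0C)^{p/q}y_q$. The essential input is Theorem~\ref{th:6a}, which with $M=\tfrac{p}{q}$ gives
\[
D^{p/q}_t E_{\mathcal{V}-p+\frac{kp}{q},C}(Z_0t)=Z_0^{p/q}\,E_{\mathcal{V}-p+\frac{(k-1)p}{q},C}(Z_0t),
\]
so differentiation lowers each order index by $\tfrac{p}{q}$ at the cost of a factor $Z_0^{p/q}$. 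First I would apply this to every summand of $y_q=\sum_{k=1}^{q}C^{\frac{(k-1)p}{q}}E_{\mathcal{V}-p+\frac{kp}{q},C}(Z_0t)$ and reindex by $m=k-1$ to obtain
\[
D^{p/q}_t y_q=Z_0^{p/q}\sum_{m=0}^{q-1}C^{\frac{mp}{q}}E_{\mathcal{V}-p+\frac{mp}{q},C}(Z_0t).
\]

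Next I would compute the comparison term, absorbing the extra $C^{p/q}$ into each coefficient and reindexing by $m=k$, to get $(Z_0C)^{p/q}y_q=Z_0^{p/q}\sum_{m=1}^{q}C^{\frac{mp}{q}}E_{\mathcal{V}-p+\frac{mp}{q},C}(Z_0t)$. Subtracting, the two sums coincide on the overlapping range $1\le m\le q-1$, so all interior terms cancel and only the two endpoints survive: the $m=0$ term of the first sum and the $m=q$ term of the second. Using $q\cdot\tfrac{p}{q}=p$, the telescoping yields
\[
\bigl[D^{p/q}_t-(Z_0C)^{p/q}\bigr]y_q=Z_0^{p/q}\Bigl[E_{\mathcal{V}-p,C}(Z_0t)-C^{p}E_{\mathcal{V},C}(Z_0t)\Bigr].
\]

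It then remains to identify the bracketed difference with the stated finite sum. For this I would invoke the recurrence relation from the first Remark following Theorem~\ref{th:lk} (obtained by setting $M=0$ in Theorem~\ref{th:lk} and using Theorem~\ref{th:ui}); applied with shift equal to the positive integer $p$ and reindexed by $r=p-j$, it reads
\[
E_{\mathcal{V}-p,C}(Z)=\sum_{r=0}^{p-1}\frac{C^{r}Z^{\mathcal{V}-p+r}}{\Gamma(\mathcal{V}-p+r+1)}+C^{p}E_{\mathcal{V},C}(Z).
\]
Substituting $Z=Z_0t$, cancelling the $C^{p}E_{\mathcal{V},C}(Z_0t)$ contributions, and multiplying through by $Z_0^{p/q}$ gives precisely
\[
Z_0^{p/q}\Bigl[E_{\mathcal{V}-p,C}(Z_0t)-C^{p}E_{\mathcal{V},C}(Z_0t)\Bigr]=\sum_{r=0}^{p-1}\frac{C^{r}Z_0^{\mathcal{V}+r-p+\frac{p}{q}}\,t^{\mathcal{V}+r-p}}{\Gamma(\mathcal{V}+r-p+1)},
\]
which is the desired right-hand side, since $(Z_0t)^{\mathcal{V}-p+r}=Z_0^{\mathcal{V}-p+r}t^{\mathcal{V}-p+r}$.

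Since the argument is largely bookkeeping, the main obstacle lies in the careful index management: one must verify that the two reindexed sums share exactly the range $1\le m\le q-1$ and that the surviving endpoints produce orders $\mathcal{V}-p$ and $\mathcal{V}$ with the correct powers of $C$. A secondary point worth checking is that the standing hypotheses $\alpha\in\mathbb{R}$, $\beta\in i\mathbb{R}$, $\alpha+1>|\beta|$ are exactly what forces the idempotent components $\nu_1,\nu_2$ to be real with $\nu_1,\nu_2>-1$, which is what legitimizes the termwise application of Theorem~\ref{th:6a} (with $\Re(m_1)>0$) to the shifted indices appearing in $y_q$, so that the formal manipulations above are valid.
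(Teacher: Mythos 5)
Your proposal is correct and follows essentially the same route as the paper: both apply Theorem \ref{th:6a} with $M=\frac{p}{q}$ termwise to $y_q$, shift indices, and reduce the two surviving boundary terms $E_{\mathcal{V}-p,C}(Z_0t)$ and $C^{p}E_{\mathcal{V},C}(Z_0t)$ to the stated finite sum. The only difference is organizational: you subtract $(Z_0C)^{p/q}y_q$ first so that the cancellation is an explicit telescoping, and you cite the recurrence from the Remark following Theorem \ref{th:lk} for $E_{\mathcal{V}-p,C}-C^{p}E_{\mathcal{V},C}$, whereas the paper re-derives that recurrence inline by splitting the series for $E_{\mathcal{V}-p,C}(Z_0t)$ at $r=p$ and reassembles $(CZ_0)^{p/q}y_q$ on the right-hand side --- the same algebra in a different order.
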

 \begin{proof}
     Setting $M=\frac{p}{q}$ in Theorem \ref{th:6a}, we obtain
     \begin{align}\label{eq:l4}
         D^\frac{p}{q}_t E_{\mathcal{V},C}(Z_0t)=Z_0^\frac{p}{q}E_{\mathcal{V}-\frac{p}{q},C}(Z_0t).
     \end{align}
 Performing the replacement $\mathcal{V}$ by $\mathcal{V}-p+\frac{kp}{q}$ in \eqref{eq:l4}, we get
 \begin{align*}
      D^\frac{p}{q}_t E_{\mathcal{V}-p+\frac{kp}{q},C}(Z_0t)=Z_0^\frac{p}{q}E_{\mathcal{V}-p+\frac{(k-1)p}{q},C}(Z_0t).
 \end{align*}
 Now
 \begin{align*}
     D^\frac{p}{q}_ty_q&=\sum\limits_{k=1}^qC^{\frac{(k-1)p}{q}} D^\frac{p}{q}_tE_{\mathcal{V}-p+\frac{kp}{q},C}(Z_0t)\\
     &=Z_0^\frac{p}{q}\sum\limits_{k=1}^qC^{\frac{k-1}{n}}E_{\mathcal{V}-p+\frac{(k-1)p}{q},C}(Z_0t)\\
     &=Z_0^\frac{p}{q}\left[E_{\mathcal{V}-p,C}(Z_0t)+\sum\limits_{k=2}^qC^{\frac{(k-1)p}{q}}E_{\mathcal{V}-p+\frac{(k-1)p}{q},C}(Z_0t)\right]\\
     &=Z_0^\frac{p}{q}\left[(Z_0t)^{\mathcal{V}-p}\sum\limits_{r=0}^\infty\frac{(CZ_0t)^r}{\Gamma(\mathcal{V}-p+r+1)}+\sum\limits_{k=2}^qC^{\frac{(k-1)p}{q}}E_{\mathcal{V}-p+\frac{(k-1)p}{q},C}(Z_0t)\right]\\
     &=Z_0^\frac{p}{q}\left[\sum\limits_{r=0}^{p-1}\frac{C^r(Z_0t)^{\mathcal{V}+r-p}}{\Gamma(\mathcal{V}-p+r+1)}+\sum\limits_{r=p}^\infty\frac{C^r(Z_0t)^{\mathcal{V}+r-p}}{\Gamma(\mathcal{V}-p+r+1)}+\sum\limits_{k=2}^qC^{\frac{(k-1)p}{q}}E_{\mathcal{V}-p+\frac{(k-1)p}{q},C}(Z_0t)\right]\\
     &=Z_0^\frac{p}{q}\sum\limits_{r=0}^{p-1}\frac{C^r(Z_0t)^{\mathcal{V}+r-p}}{\Gamma(\mathcal{V}-p+r+1)}+Z_0^\frac{p}{q}\left[\sum\limits_{r=0}^\infty\frac{C^{r+p}(Z_0t)^{\mathcal{V}+r}}{\Gamma(\mathcal{V}+r+1)}+\sum\limits_{k=2}^qC^{\frac{(k-1)p}{q}}E_{\mathcal{V}-p+\frac{(k-1)p}{q},C}(Z_0t)\right]\\
     &=Z_0^\frac{p}{q}\sum\limits_{r=0}^{p-1}\frac{C^r(Z_0t)^{\mathcal{V}+r-p}}{\Gamma(\mathcal{V}-p+r+1)}+Z_0^\frac{p}{q}\left[C^pE_{\mathcal{V},C}(Z_0t)+\sum\limits_{k=2}^qC^{\frac{(k-1)p}{q}}E_{\mathcal{V}-p+\frac{(k-1)p}{q},C}(Z_0t)\right]\\
     &=\sum\limits_{r=0}^{p-1}\frac{C^rZ_0^{\mathcal{V}+r-p+\frac{p}{q}}t^{\mathcal{V}+r-p} }{\Gamma(\mathcal{V}-p+r+1)}+(CZ_0)^\frac{p}{q}\left[C^{\frac{(q-1)p}{q}}E_{\mathcal{V},C}(Z_0t)+\sum\limits_{k=2}^{q}C^{\frac{(k-2)p}{q}}E_{\mathcal{V}-p+\frac{(k-1)p}{q},C}(Z_0t)\right]\\
     &=\sum\limits_{r=0}^{p-1}\frac{C^rZ_0^{\mathcal{V}+r-p+\frac{p}{q}}t^{\mathcal{V}+r-p}}{\Gamma(\mathcal{V}-p+r+1)}+(CZ_0)^\frac{p}{q}\left[C^{\frac{(q-1)p}{q}}E_{\mathcal{V},C}(Z_0t)+\sum\limits_{k=1}^{q-1}C^{\frac{(k-1)p}{q}}E_{\mathcal{V}-p+\frac{kp}{q},C}(Z_0t)\right]\\
     &=\sum\limits_{r=0}^{p-1}\frac{C^rZ_0^{\mathcal{V}+r-p+\frac{p}{q}}t^{\mathcal{V}+r-p}}{\Gamma(\mathcal{V}-p+r+1)}+(CZ_0)^\frac{p}{q}y_q.
 \end{align*}
 Hence the proof is completed.
 \end{proof}
 \begin{corollary}
     Setting $p=1$ and $q=n$ in Theorem \ref{th:5s}, we obtain $y_n=\sum\limits_{k=1}^nC^{\frac{k-1}{n}}E_{\mathcal{V}-1+\frac{k}{n},C}(Z_0t)$ satisfies the following fractional differential equation
      \begin{align*}
          [D^{\frac{1}{n}}_t-(Z_0C)^{\frac{1}{n}}]y_n=\frac{Z_0^{\mathcal{V}-1+\frac{1}{n}}t^{\mathcal{V}-1}}{\Gamma(\mathcal{V})}.
      \end{align*}
 \end{corollary}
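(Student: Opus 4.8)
The plan is to derive this corollary directly from Theorem~\ref{th:5s} by specializing the parameters to $p=1$ and $q=n$; no fresh analysis is needed, since every hypothesis ($\alpha\in\mathbb{R}$, $\beta\in i\mathbb{R}$, $\alpha+1>|\beta|$) carries over verbatim and the work is purely a substitution followed by an index simplification. First I would insert $p=1$, $q=n$ into the definition of $y_q$ supplied by the theorem. The exponent of $C$ collapses from $\tfrac{(k-1)p}{q}$ to $\tfrac{k-1}{n}$, and the lower parameter of the bicomplex Miller--Ross function collapses from $\mathcal{V}-p+\tfrac{kp}{q}$ to $\mathcal{V}-1+\tfrac{k}{n}$, so that $y_q$ becomes exactly $y_n=\sum_{k=1}^{n}C^{\frac{k-1}{n}}E_{\mathcal{V}-1+\frac{k}{n},C}(Z_0t)$, which is the asserted test function.

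Next I would substitute the same values into the fractional differential equation of Theorem~\ref{th:5s}. On the left-hand side the operator $D^{\frac{p}{q}}_t-(Z_0C)^{\frac{p}{q}}$ specializes immediately to $D^{\frac{1}{n}}_t-(Z_0C)^{\frac{1}{n}}$, so that side requires nothing further. The only place where a genuine (though still elementary) simplification occurs is the right-hand side, and that is where I would focus the verification.

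The key step is the collapse of the finite sum on the right. With $p=1$ the summation $\sum_{r=0}^{p-1}$ runs only over $r=0$, so the entire right-hand side reduces to its single $r=0$ term, namely $\frac{C^{0}\,Z_0^{\mathcal{V}+0-1+\frac{1}{n}}\,t^{\mathcal{V}+0-1}}{\Gamma(\mathcal{V}+0-1+1)}=\frac{Z_0^{\mathcal{V}-1+\frac{1}{n}}\,t^{\mathcal{V}-1}}{\Gamma(\mathcal{V})}$, which is precisely the right-hand side claimed in the corollary. Thus the stated identity follows at once from Theorem~\ref{th:5s}.

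Since the argument is a clean specialization, there is no real obstacle: the one point deserving explicit mention is that the upper summation index $p-1$ becomes $0$, so the sum degenerates to a single surviving term, and I would make sure the exponent arithmetic ($\mathcal{V}+r-p+\tfrac{p}{q}$ and $\mathcal{V}+r-p$ at $r=0$, $p=1$, $q=n$) is displayed cleanly so the reader sees the match with the theorem without ambiguity.
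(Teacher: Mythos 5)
Your proposal is correct and follows exactly the paper's intended route: the corollary is obtained purely by substituting $p=1$, $q=n$ into Theorem \ref{th:5s}, with the right-hand sum $\sum_{r=0}^{p-1}$ degenerating to its single $r=0$ term $\frac{Z_0^{\mathcal{V}-1+\frac{1}{n}}t^{\mathcal{V}-1}}{\Gamma(\mathcal{V})}$. The exponent arithmetic you display matches the paper's statement, so nothing further is needed.
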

 \section{Conclusion}
 In this paper, we introduce bicomplex Miller Ross function and explore its properties, building upon its complex counterpart. We derive various properties and special cases of the bicomplex Miller Ross function including recurrence relations, integral representations, differential relations and differential equation.  Furthermore, we introduce a new bicomplex generalization of the fractional kinetic equation involving the bicomplex Miller Ross function and derive its solution. The bicomplex generalised fractional kinetic equation using the bicomplex Miller Ross function can be reduced to fractional kinetic equations involving other well-known functions under specific special conditions. Future research could establish the relationship of this bicomplex generalization with fractional calculus operators and investigate its applications in various fields of science and engineering.

\end{document}